\DeclareMathAlphabet{\mathcalligra}{T1}{calligra}{m}{n} \DeclareFontShape{T1}{calligra}{m}{n}{<->s*[2.2]callig15}{} 
\newcommand{\sr}{\ensuremath{\mathcalligra{r}}}
\newtheorem{thm}{Theorem}
\numberwithin{equation}{subsection}
\newtheorem{theorem}[equation]{Theorem}
\newtheorem{lemma}[equation]{Lemma}
\newtheorem{proposition}[equation]{Proposition}
\newtheorem{cor}[equation]{Corollary}
\theoremstyle{definition}
\theoremstyle{definition}
\newtheorem{definition}[equation]{Definition}
\newtheorem{remark}[equation]{Remark}
\newcommand{\fr}{\mathfrak}
\newcommand{\sbt}{\,\begin{picture}(-1,1)(-1,-3)\circle*{2}\end{picture}\ }
\def \A {\mathbb{A}}
\def \Q{\mathbb{Q}}
\def \G {\mathbb{G}}
\def \C {\mathbb{C}}
\def \R {\mathbb{R}}
\def\Z{\mathbb Z}
\def \calf {\mathcal{F}}
\def \calh {\mathcal{H}}
\def \cala {\mathcal{A}}
\def \calc {\mathcal{C}}
\def \calo {\mathcal{O}}
\def \cals {\mathcal{S}}
\def \calw {\mathcal{W}}
\def \calm {\mathcal{M}}
\def \tcalm {\widetilde{\mathcal{M}}}
\def \HGK {\mathcal{H}^G_{K_f}}
\def\id{1\!\!1}
\def\Hom{\mathrm{Hom}}
\def\Sym{\mathrm{Sym}}
\def\Ad{\mathrm{Ad}}
\def\GL{\mathrm{GL}}
\def\GO{\mathrm{GO}}
\def\Orth{\mathrm{O}}
\def\SO{\mathrm{SO}}
\def\g{\mathfrak{g}}
\def\k{\mathfrak{k}}
\def\Aut{\mathrm{Aut}}
\def\Res{\mathrm{Res}}
\def\det{\mathrm{det}}
\def\dim{\mathrm{dim}}
\def\End{\mathrm{End}}
\def\ui{{\underline i}}
\def\uj{{\underline j}}
\def\tcstar{{\fr t^c}^*}
\def\SGK{S^G_{K_f}}
\def\Hbar{\bar H}
\def\sv{{\sf v}}
\def\ram{\mathrm{ram}}
\def\Lalg{L^{\mathrm{alg}}}
\def\cart{\underset{\text{c}}{\times}}
\def\prv{\underset{\text{prv}}{\times}}
\title[Special values of adjoint $L$-functions and congruences]{Special values of adjoint $L$-functions and congruences for automorphic forms on $\GL(n)$ over a number field}
\author{\bf Baskar Balasubramanyam \ \ \and \ \ A. Raghuram}
\address{Department of Mathematics, Indian Institute of Science Education and Research, 
Dr. Homi Bhabha Road, Pashan, Pune, Maharashtra 411008, INDIA.} 
\email{baskar@iiserpune.ac.in}
\email{raghuram@iiserpune.ac.in}
\date{\today}
\subjclass[2010]{11F75; 11F66, 11F67, 11F70, 22E55}
\begin{document}

\maketitle

\tableofcontents

\section{\bf Introduction}
In the 80's Hida \cite{Hi1} proved that if a prime $p$, taken to be outside an explicit finite set of exceptional primes, divides the algebraic part of the value at $s=1$ of the adjoint $L$-function attached to a holomorphic primitive cusp form $f = \sum a_n(f)q^n$, then $p$ is a congruence prime, i.e., there is another primitive cusp form
$g = \sum a_n(g) q^n$ of the same weight and level as $f$ such that $a_n(f) \equiv a_n(g) \pmod{p}.$ A converse to 
such a result, that congruence primes are the primes which appear in adjoint $L$-values, 
was proved by Hida \cite{Hi2} in the ordinary case, and by Ribet~\cite{ribet} in the non-ordinary case.  
Later, the results of 
\cite{Hi1} were generalized to various other $\GL(2)$-contexts: Ghate~\cite{Gh} and Dimitrov~\cite{D} considered the Hilbert modular situation; Urban~\cite{U} considered $\GL(2)$ over an imaginary quadratic extension; 
Hida~\cite{hida-shimura-vol}, and very recently Namikawa \cite{N}, dealt with the case of $\GL(2)$ over any number field
$F$ using a mix of classical and adelic language. 
However, the above mentioned converse due to Hida and Ribet is not known for $\GL(2)/F$ if $F \neq \Q$, except for some work by Ghate~\cite{Gh1}.
See also the article by Doi, Hida and Ishii~\cite{doi-hida-ishii} for a discussion of the history of this problem. 
In this article we generalize the results of \cite{D}, \cite{Gh}, \cite{Hi1}, \cite{hida-shimura-vol}, \cite{N}, and \cite{U}  to the case of a cohomological cuspidal automorphic representation $\pi$ of $\GL_n$ over any number field. We prove an algebraicity, and in fact, an integrality result for $L(1, {\rm Ad}^\circ, \pi, \varepsilon).$ See Thm.\,\ref{thm:l-value}. 
Often times, a cohomological interpretation of an analytic theory of $L$-functions, depends on an assumption that a 
quantity coming from archimedean considerations in nonzero. In our situation, we adapt the methods of recent work of 
Sun \cite{Sun} to prove an appropriate nonvanishing result; see Prop.\,\ref{prop:nv-hyp}. 
With the integrality result on the adjoint $L$-value in hand,  we then prove that a prime, outside a finite set of exceptional primes, that divides this $L$-value is a congruence prime. See Thm.\,\ref{thm:congruence}. 

\medskip

We will now describe our results in some more detail. Let $F$ be a number field and let $G$ denote the restriction of scalars from $F$ to $\Q$ of the algebraic group $\GL_n/F$. Let $\A$ denote the ad\`eles over $\Q$, let $\A_f$ and $\A_\infty$ respectively denote the finite and infinite part of $\A$. Let $K_f$ be an open compact subgroup of $G(\A_f)$. Denote by $S^G_{K_f}$ the locally symmetric space $G(\Q)\backslash G(\A)/K_f K_\infty^\circ$ for a suitable choice of $K_\infty \subset G_\infty = G(\R)$ as in \S\ref{sec:basic-setup}, and $K_\infty^\circ$ is the connected component containing the identity in $K_\infty$. Let $\varepsilon$ be a character on $K_\infty/K_\infty^\circ$ and we will let $\tilde \varepsilon = (-1)^{n-1} \varepsilon$.
Let $\pi$ be a cuspidal automorphic representation of $G(\A)$ with central character $\omega$. Let $\tilde \pi$ be the contragradient of $\pi.$ The associated Rankin--Selberg $L$-function $L(s, \pi \times \tilde \pi)$ has a meromorphic continuation to all of $\C$ with a simple pole at $s=1$. We define the adjoint $L$-function of $\pi$ by the formula
$$
L(s, \pi \times \tilde \pi) \ = \  \tilde\zeta_F (s) L(s, \Ad^0, \pi),
$$
where $\tilde\zeta_F(s)$ is the completed Dedekind zeta function of $F.$ 
Taking residues at $s=1$ gives us an expression for the special value $L(1, \Ad^0, \pi)$ in terms of the residue Res$_{s=1} L(s, \pi \times \tilde \pi)$. This residue can be written in terms of a Petersson inner product of automorphic forms in $\pi$ and $\tilde \pi$. Now we suppose that $\pi$ is of cohomological type, i.e., it contributes to the cuspidal cohomology of $S^G_{K_f}$ with coefficients in a sheaf attached to an algebraic representation of $G$ of highest weight 
$\lambda$; we will write this as $\pi \in {\rm Coh}(G, K_f, \lambda).$ In such a situation, the finite part of $\pi$ is defined over a number field denoted $\Q(\pi)$ called the rationality field of $\pi.$ 
Then this inner product has an algebraic description in terms of Poincar\'e duality for the cohomology of the locally symmetric space $S^G_{K_f}$. This gives us our first theorem, proved in \S \ref{proof-thma}:

\begin{thm}
Let $\pi \in {\rm Coh}(G, K_f, \lambda)$ and let $\varepsilon$ be a character of $K_\infty/K_\infty^\circ$ that is 
`permissible' for $\pi$ as in \S\ref{sec:whit-coh}. Then there exist nonzero complex numbers: 
\begin{itemize}
\item $\omega_F$ depending only on the base field $F$, 
\item $\fr p_{\rm ram} (\pi)$ depending only on the ramified local components of $\pi$, 
\item $\fr p_\infty (\pi)$ depending only on the archimedean components of $\pi$, 
\item $\fr p^\varepsilon (\pi)$ (resp., $\fr q^{\tilde{\varepsilon}}(\tilde \pi)$) coming from a comparison of $\Q(\pi)$-structure on a Whittaker model of $\pi_f$ and a $\Q(\pi)$-structure on a realization of $\pi_f$ in bottom (resp., top) degree cuspidal cohomology, 
\end{itemize}
such that the quantity 
$$
L^{\mathrm{alg}} (1, \Ad^0, \pi , \varepsilon) := \frac{L(1, \Ad^0, \pi)}{\omega_F \cdot \fr p_{\rm ram} (\pi) \cdot \fr p_\infty (\pi) \cdot \fr p^\varepsilon (\pi) \cdot \fr q^{\tilde \varepsilon} (\tilde \pi)}
$$
is algebraic. Moreover, for all $\sigma \in \mathrm{Aut}(\C)$ we have
$$
\sigma( \Lalg (1, \Ad^0, \pi, \varepsilon)) \ = \  \Lalg (1, \Ad^0, {^\sigma} \pi, \varepsilon).
$$
In particular, $\Lalg (1, \Ad^0, \pi, \varepsilon)  \in \Q(\pi).$ 

\medskip

For a prime $p$, take an extension $E$ of $\Q_p$ that contains $F$, the rationality field 
$\Q(\pi)$, and all their conjugates. Fix an isomorphism $\iota : \overline{\Q}_p \to \C$ of an algebraic closure of 
$\Q_p$ with $\C.$ Let $\calo$ be the ring of integers of $E.$ We can canonically refine the definitions of the periods 
$\fr p^\varepsilon (\pi)$ and $\fr q^{\tilde{\varepsilon}}(\tilde \pi)$ as in (\ref{eqn:refined-periods}) so that $\Lalg (1, \Ad^0, \pi, \varepsilon) \in \iota(\calo).$ 
\end{thm}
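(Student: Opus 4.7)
The plan is to reduce $L(1,\Ad^0,\pi)$ to a Poincar\'e--duality pairing on cuspidal cohomology via Rankin--Selberg theory, and then exploit the compatibility of two $\Q(\pi)$-rational structures on $\pi_f$: its Whittaker model and its realization in cuspidal cohomology. From the factorization $L(s,\pi\times\tilde\pi) = \tilde\zeta_F(s)\,L(s,\Ad^0,\pi)$ together with the simple pole of $\tilde\zeta_F$ at $s=1$ with explicit non-zero residue (in terms of discriminant, regulator, class number, and roots of unity of $F$), one obtains
\[
L(1,\Ad^0,\pi) \ = \ \frac{1}{\omega_F}\cdot \mathrm{Res}_{s=1}L(s,\pi\times\tilde\pi),
\]
where $\omega_F$ absorbs $\mathrm{Res}_{s=1}\tilde\zeta_F(s)$. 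The Rankin--Selberg integral represents this residue, after unfolding, as a Petersson--type pairing of cusp forms in $\pi$ and $\tilde\pi$, with local zeta integrals at ramified finite places contributing $\fr p_\ram(\pi)$ and at archimedean places contributing $\fr p_\infty(\pi)$; the non-vanishing of $\fr p_\infty(\pi)$ is Prop.\,\ref{prop:nv-hyp}.

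Next, I would convert the analytic Petersson pairing into an algebraic cup-product. Since $\pi \in \mathrm{Coh}(G,K_f,\lambda)$, it contributes to cuspidal cohomology in a bottom degree $b$ with coefficients in the local system $\calm_\lambda$; dually, $\tilde\pi$ contributes to top degree $t$ with coefficients in $\calm_{\lambda^\vee}$. Poincar\'e duality on $\SGK$, together with the coefficient pairing, yields a pairing between these two cohomologies that is defined over $\Q(\pi)$. Choosing a bottom-degree cuspidal class associated with a fixed $\Q(\pi)$-rational Whittaker vector of $\pi_f$ (and similarly a top-degree class for $\tilde\pi_f$) introduces the comparison scalars $\fr p^\varepsilon(\pi)$ and $\fr q^{\tilde\varepsilon}(\tilde\pi)$. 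Substituting into the Rankin--Selberg formula, $\Lalg(1,\Ad^0,\pi,\varepsilon)$ becomes the Poincar\'e pairing of two $\Q(\pi)$-rational cohomology classes, and is therefore algebraic.

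For $\sigma$-equivariance I would track $\sigma \in \Aut(\C)$ through each ingredient. The $\Q(\pi)$-structure on cuspidal cohomology in each degree is $\sigma$-natural; Poincar\'e duality is defined over $\Q$; the $\sigma$-action $\pi_f \mapsto {}^\sigma\!\pi_f$ is compatible with the Whittaker model so that $\sigma(\fr p^\varepsilon(\pi)) = \fr p^\varepsilon({}^\sigma\!\pi)$ and $\sigma(\fr q^{\tilde\varepsilon}(\tilde\pi)) = \fr q^{\tilde\varepsilon}({}^\sigma\!\tilde\pi)$; and $\omega_F$, $\fr p_\ram(\pi)$, $\fr p_\infty(\pi)$ transform consistently because they depend only on fixed local data (the base field, ramified local factors, and the archimedean infinity type, whose highest weight $\lambda$ is itself permuted by $\sigma$). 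Combining these naturalities yields the identity $\sigma(\Lalg(1,\Ad^0,\pi,\varepsilon)) = \Lalg(1,\Ad^0,{}^\sigma\!\pi,\varepsilon)$; taking $\sigma\in\Aut(\C/\Q(\pi))$ then gives $\Lalg \in \Q(\pi)$.

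Finally, for the integrality refinement, I would work with $\calo$-lattices in place of $\Q(\pi)$-rational structures: fix an $\calo$-lattice in the Whittaker model of $\pi_f$ and in the $\pi_f$-isotypic component of the $\calo$-integral cuspidal cohomology in both bottom and top degrees, and redefine $\fr p^\varepsilon(\pi)$, $\fr q^{\tilde\varepsilon}(\tilde\pi)$ as the scalars identifying these lattices under the Whittaker--cohomology comparison (canonical up to $\calo^\times$). Since the Poincar\'e pairing of $\calo$-integral classes lies in $\iota(\calo)$, the refined $\Lalg(1,\Ad^0,\pi,\varepsilon)$ does too. The main obstacle throughout is the archimedean non-vanishing: without control of $\fr p_\infty(\pi)$ via Prop.\,\ref{prop:nv-hyp}, the Rankin--Selberg identity cannot be transferred into a meaningful algebraic cup-product statement, and no amount of finite-adelic bookkeeping compensates for a vanishing archimedean zeta integral.
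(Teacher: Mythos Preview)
Your proposal is correct and follows essentially the same route as the paper: Rankin--Selberg theory reduces $L(1,\Ad^0,\pi)$ to a Petersson pairing, which is then reinterpreted as the Poincar\'e pairing $\langle \vartheta^\circ_{b,\varepsilon}(\pi),\, \tilde\vartheta^\circ_{t,\tilde\varepsilon}(\tilde\pi)\rangle$ of normalized cohomology classes, from which algebraicity, $\sigma$-equivariance, and integrality all follow. One small refinement: for the $\sigma$-equivariance you should not try to track $\sigma$ through each transcendental factor $\omega_F$, $\fr p_\infty(\pi)$, $\fr p^\varepsilon(\pi)$ separately (these are not individually $\sigma$-stable in any useful sense, and $\fr p_\infty({}^\sigma\pi)=\fr p_\infty(\pi)$ is only proved in the paper for $F$ totally real or CM)---instead, having already established the identity $\Lalg = \langle \vartheta^\circ_{b,\varepsilon}, \tilde\vartheta^\circ_{t,\tilde\varepsilon}\rangle$, simply apply $\sigma$ directly to this pairing and use that Poincar\'e duality and the normalized classes are $\sigma$-equivariant by construction, exactly as the paper does.
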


This theorem is proved by giving a cohomological interpretation to Rankin--Selberg integrals for $\GL_n \times \GL_n$ in the special situation when we have a pair $\pi \times \tilde\pi$ of a representation and its contragredient; 
see the diagram in \S\,\ref{sec:main-idea}. 
The periods $\fr p^\varepsilon(\pi)$ and 
$\fr q^{\tilde \varepsilon}(\tilde \pi)$ have been studied in \cite{RS}.  The refinement to get integrality results is also similar to the definition of canonical periods for modular forms by Vatsal~\cite{Vat}. The nonvanishing result Prop.\,\ref{prop:nv-hyp}
for the archimedean quantity mentioned earlier is proved in \S\,\ref{nv-sec}.

We should mention that a form of the above theorem, in the special case when the base field 
$F$ is a CM-field, and when $\pi$ is conjugate-self-dual, has been announced by Grobner, Harris and 
Lapid \cite{GHL}; in their situation, 
the Rankin--Selberg $L$-function splits into a product of two Asai $L$-functions and their work studies the arithmetic properties of such Asai $L$-values.


Let's highlight a phenomenon which is already seen in certain $\GL(2)$ contexts as in Hida~\cite{hida-shimura-vol} and Urban~\cite{U}. The value $L(1, \Ad^0, \pi)$ is a critical value (in the sense of Deligne) if and only if $n=2$ and $F$ is totally real; see Prop.\,\ref{prop:critical}. By the theorems in \cite{raghuram-imrn} and \cite{raghuram-2013} it is clear that 
the periods $\fr p^\varepsilon(\pi)$ arising from Whittaker models and bottom-degree cohomology 
appear in critical values, albeit for $L$-functions for $\GL(n) \times \GL(n-1).$ 
One may expect therefore that $\fr p^\varepsilon(\pi)$ is somehow related to Deligne's periods attached to the (conjectural) motive $M(\pi)$ corresponding to $\pi$; see Grobner--Harris~\cite{GH} for some related results.  
From the above theorem, one may expect that the other period 
$\fr q^{\tilde \varepsilon}(\tilde \pi)$ arising from a comparison of Whittaker model and 
top-degree cuspidal cohomology is a Beilinson type regulator attached to $M(\pi),$ since $L(1, \Ad^0, \pi)$ is not critical in general. However, note that if $\pi$ has a Shalika model, then the period obtained by comparing rational structures on Shalika models and top-degree cohomology turns out to be related to critical values via the results of 
Grobner--Raghuram~\cite{grobner-raghuram-ajm}. It is an interesting problem then to understand the precise motivic interpretation of the various periods arising from top-degree cuspidal cohomology.

\medskip

We will now discuss congruence primes for automorphic forms on $\GL_n$ and primes appearing in adjoint $L$-values. 
Recall that we have a prime $p$, an extension $E$ of $\Q_p$ that contains $F$, $\Q(\pi)$, and all their conjugates, and we have fixed an isomorphism $\iota : \overline{\Q}_p \to \C.$  Much of what follows on congruences depends on the choice of this isomorphism $\iota$, 
however, for brevity, we will suppress it from our notation. 
Let $\wp$ be the maximal ideal of  the ring of integers $\calo$ of $E.$
Let $\pi$ and $\pi^\prime$ be two cuspidal automorphic representations for $G.$ Suppose that $\fr l$ is a prime ideal of $F$ away from the primes above $p$ and the ramified primes of $\pi$ and $\pi^\prime$. We denote the Satake parameters of $\pi$ and $\pi^\prime$ at $\fr l$ by $\alpha_{\fr l, 1}, \dots, \alpha_{\fr l, n}$ and $\alpha^\prime_{\fr l, 1}, \dots, \alpha^\prime_{\fr l, n},$ respectively. 
Suppose that $E$ is large enough to contain the fields $\Q(\pi)$ and $\Q(\pi^\prime)$. We say that $\pi$ is congruent to $\pi^\prime$ modulo $\wp$ if for every $\fr l$ above and $1 \le j \le n$, we have
\begin{equation*}
\sum_{i_1<i_2 < \cdots < i_j} \alpha_{\fr l, i_1} \cdots \alpha_{\fr l, i_j} 
\ \equiv \ 
\sum_{i_1<i_2 < \cdots < i_j} \alpha^\prime_{\fr l, i_1} \cdots \alpha^\prime_{\fr l, i_j} \quad \pmod{\wp}. 
\end{equation*}

\medskip

Our second main theorem, proved in \S \ref{proof-thmb}, says that 
\begin{thm}
\label{intro-thm:congruence}
Let $\pi$ and $\varepsilon$ be as above. There exist finite sets $S_1, S_2$ and $S_3$ consisting of rational primes such that if
$$ 
v_\wp (\Lalg (1, \Ad^0, \pi, \varepsilon)) > 0,
$$
\begin{enumerate}
\item and if $p \not \in S_1$, then there exists $\pi^\prime$ congruent to $\pi$ mod $\wp$ and 
$\pi^\prime \not \simeq \pi$,
\item and if $ p \not \in S_2$ and if $\pi$ is of parallel weight, then there exists $\pi^\prime$ congruent to $\pi$ mod $\wp$ and $\pi^\prime \not \simeq {^\sigma} \pi$ for any $\sigma \in \mathrm{Aut}(\C)$,
\item and if $ p \not \in S_3$ and if $\pi$ is of parallel weight, then there exists $\sigma \in \mathrm{Aut} (\C)$ with 
$\pi' = {^\sigma} \pi$ congruent to $\pi$ mod $\wp$ and $\pi^\prime \not \simeq \pi$.
\end{enumerate}
A priori, we can only say that $\pi^\prime$ contributes to the inner cohomology. If we further assume that the highest weight $\lambda$ is regular, then $\pi^\prime$ is cuspidal.
\end{thm}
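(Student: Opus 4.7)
The plan is a Hida-style congruence argument: translate a positive $\wp$-adic valuation of $\Lalg(1,\Ad^0,\pi,\varepsilon)$ into nontriviality of an integral Hecke-algebra congruence module, and then read off the required $\pi'$. Let $\T$ denote the $\calo$-algebra generated by the Hecke operators away from $p$ and the ramified primes, acting on the integral inner cohomology $H^\bullet_!(\SGK,\widetilde{\calm}_{\lambda,\calo})$; under the regularity hypothesis on $\lambda$, this coincides with cuspidal cohomology. Via $\iota$, the representation $\pi$ determines a character $\lambda_\pi:\T\to\calo$ with kernel a height-zero prime $\fr p_\pi$ sitting in a unique maximal ideal $\fr m_\pi$. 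The \emph{congruence number} $\eta_\pi\in\calo$, defined up to units by $\eta_\pi\calo=\lambda_\pi(\mathrm{Ann}_\T(\fr p_\pi))$, controls via the standard Wiles--Hida lemma the existence of a second height-zero prime $\fr p_{\pi'}\subset\fr m_\pi$ distinct from $\fr p_\pi$: such a $\fr p_{\pi'}$ exists exactly when $v_\wp(\eta_\pi)>0$, and the corresponding automorphic representation $\pi'$ has Hecke eigensystem congruent to that of $\pi$ modulo $\wp$.

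The core step is then the identity
$$
v_\wp\bigl(\Lalg(1,\Ad^0,\pi,\varepsilon)\bigr)\ =\ v_\wp(\eta_\pi),
$$
valid whenever $p\notin S_1$. Starting from the cohomological recipe behind Thm.\,A---which writes the residue at $s=1$ of $L(s,\pi\times\tilde\pi)$ as a Poincar\'e duality cup product of a bottom-degree class realizing $\pi$ against a top-degree class realizing $\tilde\pi$---one uses the canonical refinement (\ref{eqn:refined-periods}) of the periods $\fr p^\varepsilon(\pi)$ and $\fr q^{\tilde\varepsilon}(\tilde\pi)$ to make both classes into primitive integral generators of their respective Hecke-isotypic $\calo$-direct summands of cohomology. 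The cup product is then, up to $\wp$-adic units, the image under $\lambda_\pi$ of the Hecke-algebra projector onto the $\pi$-component, and by construction that image is $\eta_\pi$. The set $S_1$ collects the residue characteristics at which this identification degrades: primes of ramification of $\pi$, primes dividing $|K_\infty/K_\infty^\circ|$ and the orders of the relevant component groups, torsion primes of cohomology outside the middle degrees, and primes where the canonical refinement is not uniquely pinned down.

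With this in hand, parts (1)--(3) are separated by the $\Aut(\C)$-action on the orbit of $\pi$ in $\spec\T$. Part (1) is immediate: a height-zero prime $\fr p_{\pi'}\subset\fr m_\pi$ distinct from $\fr p_\pi$ yields some $\pi'\not\simeq\pi$. For (2) and (3) one classifies a congruence as \emph{internal} if $\pi'\simeq{^\sigma}\pi$ for some $\sigma\in\Aut(\C)$, and \emph{external} otherwise. Under parallel weight all conjugates ${^\sigma}\pi$ lie in a common $\T$, and a short computation---relating $\eta_\pi$ to the norm $\mathrm{N}_{\Q(\pi)/\Q}\eta_\pi$ and to the contribution of the orbit $\{\fr p_{^\sigma\pi}\}_\sigma$ inside $\T_{\fr m_\pi}$---separates the two contributions; $S_2$ (resp.\ $S_3$) is then chosen to rule out the case where all congruences are purely internal (resp.\ purely external), thereby forcing the required form of $\pi'$. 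Cuspidality of $\pi'$ in the regular weight case is automatic from the identification $H^\bullet_!=H^\bullet_{\mathrm{cusp}}$.

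The main obstacle lies in the middle step: ensuring the refined periods produce \emph{primitive} (not merely integral) cohomology classes inside the Hecke-isotypic summands, controlling the finite primes at which integral Poincar\'e duality or the nonvanishing archimedean quantity of Prop.\,\ref{prop:nv-hyp} introduces extra $\wp$-adic factors, and in parts (2)--(3) cleanly cutting out the Galois-orbit contribution to the congruence module. These are the points at which the exceptional sets $S_1,S_2,S_3$ are built up, and at which the parallel weight hypothesis becomes essential.
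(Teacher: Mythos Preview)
Your overall architecture---translate the integrality statement for $\Lalg(1,\Ad^0,\pi,\varepsilon)$ into a statement about a Poincar\'e pairing of integral cohomology classes, and from there read off a congruence---is the same as the paper's. But the implementation diverges at the crucial step, and there is a genuine gap.

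You assert that the cup product of the refined classes equals, up to $\wp$-adic units, the Wiles congruence number $\eta_\pi=\lambda_\pi(\mathrm{Ann}_\T(\fr p_\pi))$. That identification is not free: it typically requires that the $\pi$-isotypic piece of integral cohomology is free of rank one over the local Hecke algebra, or a Gorenstein property of $\T_{\fr m_\pi}$, none of which are available for $\GL_n$ over a general number field. The paper avoids this entirely. It never introduces $\eta_\pi$; instead it works with the \emph{cohomological} congruence module $\calc(L;V_1,V_2)=(L_1\oplus L_2)\backslash L$ for the lattice $L=\Hbar^b_!(\SGK,\tcalm_{\lambda,\calo})$ and its decomposition $V=V_1\oplus V_2$ according to $\pi$. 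The elementary linear-algebra lemma $\Lambda_1=\tilde L_1^*$ shows that $\calc(L;V_1,V_2)\neq 0$ if and only if $\mathrm{disc}(L_1\times\tilde L_1)$ is a non-unit; since the refined periods make $\vartheta^\circ_{b,\varepsilon}$ and $\tilde\vartheta^\circ_{t,\tilde\varepsilon}$ into $\calo$-\emph{generators} of the rank-one pieces, that discriminant is exactly $\prod_\varepsilon\langle\vartheta^\circ_{b,\varepsilon},\tilde\vartheta^\circ_{t,\tilde\varepsilon}\rangle=\prod_\varepsilon\Lalg(1,\Ad^0,\pi,\varepsilon)$. One then only uses the \emph{easy} implication $\calc(L;V_1,V_2)\neq 0\Rightarrow Q(\calh^\circ;e_1,e_2)\neq 0$, followed by Deligne--Serre to produce $\pi'$. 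Your claimed \emph{equality} $v_\wp(\Lalg)=v_\wp(\eta_\pi)$ would in fact give the converse direction as well, which the paper explicitly flags as unknown in this generality (the Hecke congruence module can be nonzero while the cohomological one vanishes).

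Your description of $S_1$ and your handling of parts (2)--(3) are also off. The paper's $S_1$ is precisely the set of $p$ for which $H^b(\partial\SGK,\tcalm_{\lambda,\calo})$ has $p$-torsion---this is what obstructs perfectness of the integral Poincar\'e pairing $L\times\tilde L\to\calo$---not a grab-bag of ramified or torsion primes. For (2) and (3) the paper does not manipulate norms of $\eta_\pi$; it simply \emph{redefines} $V_1$: in Case~II, $V_1$ is enlarged to $\bigoplus_{\sigma,\varepsilon}H^b_!(\cdot)({}^\sigma\pi_f\times\varepsilon)$ so that a nontrivial congruence module forces $\pi'$ outside the entire Galois orbit, and $S_2=S_1\cup\{p:L(\pi)\subsetneq L_1\}$ accounts for the possible failure of $L_1$ to split as a direct sum over conjugates. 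In Case~III one takes $V$ to be the orbit and $V_1$ the single $\pi$-piece, producing an internal congruence; the resulting $S_3$ is $S_1$ together with the congruence primes already found in Case~II. Your ``short computation'' separating internal and external contributions is not a substitute for these explicit choices of $(V,V_1)$.
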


The set $S_1$ consists of primes which support the torsion classes in the cohomology of the boundary in the 
Borel--Serre compactification of $S^G_{K_f}$; see (\ref{eqn:S1}). The reader is referred to Sect.\,\ref{excluded-primes} where we discuss a situation (modulo well-known expectations in the arithmetic theory of automorphic forms) when the set $S_1$ is possibly an empty set. The sets $S_2$ and $S_3$ are described in 
(\ref{eqn:S2}) and (\ref{eqn:S3}), respectively.

\bigskip

{\small 
{\it Acknowledgements:} 
We thank Haruzo Hida for explaining to us the history of the problem concerning congruences and adjoint $L$-values, and we thank Binyong Sun for his comments on Sect.\,\ref{nv-sec}. 
The second author is grateful to G\"unter Harder for innumerable discussions on number-theoretic applications of cohomology of arithmetic groups, and in particular, on explanations about the relation 
between special values of $L$-functions and the study of congruences, much of which finds its way into this article. 
}

\bigskip
\section{\bf Special values of the adjoint $L$-function}

Let $F$ be a number field of degree $d$ over $\Q$ and let $\calo_F$ be the ring of integers in $F$. Let $r_1$ and $r_2$ denote the number of real and complex embeddings of $F$ respectively. For any place $v$ of $F$, let $F_v$ be the completion of $F$ at $v$. For non-archimedean places, let $\calo_{F,v}$ denote the valuation ring of $F_v$. Let $\A$ be the ad\`eles over $\Q$ and let $\A_f\  (\text{resp.}, \A_\infty)$ denote the finite (resp., infinite) part of the ad\`eles. Let $\A_F = \A \otimes F$, the adele ring of $F$.
Let $G_0$ denote the algebraic group $\GL_n/F$. We denote by $B_0$, $N_0$, $T_0$ and $Z_0$ the standard Borel subgroup of all upper-triangular matrices, the maximal unipotent subgroup in $B_0$, the maximal torus in $B_0,$ and the center of $G_0$, respectively. 
Let $G = {\rm Res}_{F/\Q}(G_0)$ be the restriction of scalars from $F$ to $\Q$ of $G_0$. Similarly, define $B$, $N$,  $T$ and $Z$ as restriction of scalars of the corresponding groups over $F$. Let $S \simeq \G_m$ be the maximal $\Q$-split torus in $Z$.

We will denote by $\cala^\infty (\omega)$, the space of smooth automorphic forms with central character $\omega$ and 
$\cala_0^\infty (\omega)$ denotes the subspace of cusp forms. Let $\varphi \in \cala_0^\infty(\omega)$ and $\varphi^\prime \in \cala_0^\infty(\omega^{-1})$, then we define the Petersson inner product of these forms as
$$ \langle \varphi, \varphi^\prime \rangle = \int_{Z(\A) G(\Q) \backslash G(\A)} \varphi(g) \varphi^\prime (g) \ dg. $$
Note that this integral is well-defined since the integrand is invariant under $Z(\A)$. The measure $dg$ is a product of local measures, normalized as in Jacquet--Shalika~\cite{JS} at finite places, and as in 
Jacquet~\cite{jacquet-arch} at infinite places.

\medskip
\subsection{Rankin--Selberg integrals for $\GL_n \times \GL_n$}

Let $(\pi, V_\pi)$ and $(\pi^\prime, V_{\pi^\prime})$ be cuspidal automorphic representations of $\GL_n/F$. 
Say, $V_\pi \subset \cala_0^\infty (\omega)$ and $V_{\pi^\prime} \subset \cala_0^\infty (\omega^\prime)$ for central characters $\omega$ and $\omega^\prime$. Suppose that $\omega \omega^\prime$ is a unitary character.
We briefly review the Rankin--Selberg theory for the $L$-function associated to $\pi \times \pi^\prime$; the reader is referred to Jacquet--Shalika~\cite{JS} or Cogdell~\cite{Cog} for more details. 

Let $\cals(\A_F^n)$ denote the space of Schwartz--Bruhat functions on $\A_F^n$.  There is an action of $\GL_n(\A_F)$ on $\cals(\A_F^n)$ defined by $(g\cdot \Phi)(x) = \Phi(xg)$, for $\Phi \in \cals(\A_F^n)$ and $g\in \GL_n(\A_F)$. 
Furthermore, for $a \in \A_F^\times$, define a theta series: 
\begin{equation*} 
\Theta_\Phi(a,g) \ = \ \sum_{\xi \in F^n} (g\cdot \Phi) (a \xi) \ = \ \sum_{\xi \in F^n} \Phi(a\xi g). 
\end{equation*}
For a unitary character $\eta : F^\times \backslash \A_F^\times \to \C^\times$, define an Eisenstein series: 
$$
E(g,s)  \ = \ E(g,s,\Phi, \eta) \ = \ 
|\det \ g|^s \int_{F^\times \backslash \A_F^\times} \Theta^\prime_\Phi (a,g) \eta (a) |a|^{ns} \ d^\times a, 
$$ 
where $\Theta^\prime_\Phi (a,g) = \Theta_\Phi(a,g) - \Phi(0)$.
For cusp forms $\varphi \in V_\pi$ and $\varphi^\prime \in V_{\pi^\prime}$, define a Rankin--Selberg integral: 
$$
D(s, \varphi, \varphi^\prime, \Phi) = \int_{Z(\A) G(\Q) \backslash G(\A)} 
\varphi (g) \varphi^\prime(g) E(g,s, \Phi, \omega \omega^\prime) \, dg.
$$
This function is meromorphic in $s\in\C$ and has at most  simple poles at $s=i\sigma$ and $s=1+i\sigma$ when 
$\pi^\prime \cong \tilde \pi \otimes |\det|^{-i\sigma}$ for $\sigma \in \R$, where $\tilde \pi$ is the contragredient of $\pi.$ 
We are interested in the special case when $\pi^\prime = \tilde \pi$, and would like to 
compute the residue at $s=1.$ For $\tilde \varphi \in V_{\tilde \pi}$ we have
$$
\Res_{s=1} D(s, \varphi, \tilde \varphi, \Phi) \ = \ 
\int_{Z(\A) G(\Q) \backslash G(\A)} \varphi (g)\tilde \varphi (g)\ \Res_{s=1} E(g,s, \Phi, \id) \, dg, 
$$
where $\id$ denotes the trivial character.  

Fix  a non-trivial character $\psi$ on $\A_F/F$, and let $\hat \Phi$ be the Fourier transform of $\Phi$, 
$$
\hat \Phi (x) = \int_{\A_F^n} \Phi (y) \psi (\langle x,y\rangle) \ dy. 
$$ 
Applying the Poisson summation formula, which says $\sum_{\xi\in F^n} \Phi (\xi) = \sum_{\xi \in F^n} \hat \Phi (\xi)$,  
to the function $x \mapsto \Phi(ax)$ gives: 
$$ 
\sum_{\xi\in F^n} \Phi (a\xi) = |a|^{-n} \sum_{\xi \in F^n} \hat \Phi (a^{-1} \xi).
$$
Hence,
$$ \Theta^\prime_\Phi (a,g) = |a|^{-n} \sum_{\xi \not = 0} \widehat{(g\cdot \Phi)} (a^{-1} \xi) + |a|^{-n} \widehat{(g\cdot \Phi)} (0) - \Phi(0).$$

Let $F_0$ and $F_1$ be two continuous positive functions on $\R_+^\times$ satisfying the following conditions: $F_0 + F_1 =1$, $F_1(t) = F_0(t^{-1})$ and there exists $0<t_0<1<t_1$ such that $F_0(t) = 0$ for $t<t_0$ and $F_0(t)=1$ for $t>t_1$. Let 
$$
\theta^i (s, \eta, \Phi) =  \int_{F^\times \backslash \A_F^\times} \sum_{\xi \not = 0} \Phi (a \xi) |a|^{ns} \eta(a) F_i(|a|)\ d^\times a, \mbox{ and}
$$
$$
\lambda (s, \eta) = \int_{F^\times \backslash \A_F^\times} |a|^{ns} \eta(a) F_1(|a|)\ d^\times a.
$$
Then,
\begin{eqnarray*} 
E(g,s) &=& |\det \ g|^s \int_{F^\times \backslash \A_F^\times} \sum_{\xi \not = 0} (g \cdot \Phi) (a \xi)  \eta (a) |a|^{ns} \ d^\times a, \\
 &=& |\det \ g|^s \ [\theta^0 (s, \eta, g\cdot \Phi) + \theta^1(s, \eta, g\cdot \Phi)].
\end{eqnarray*}
But,
$$\theta^1(s,\eta, g\cdot \Phi) = \theta^0( n-ns, \eta^{-1}, \widehat{g\cdot \Phi}) - \lambda (n-ns, \eta^{-1}) \widehat{(g\cdot \Phi)}(0) - \lambda(ns, \eta) \Phi(0).$$

In order to calculate residues, note that $\theta^0$ is holomorphic and that $\lambda (s,\id)$ is meromorphic with a simple pole at $s=0.$ (See Godement--Jacquet~\cite[\S 11]{GJ}.) We see that $\widehat{(g\cdot \Phi)} (0) = |\det (g)|^{-1} \hat \Phi (0)$, and hence
$$
\Res_{s=1} E(g,s, \Phi, \id) \ = \ \Res_{s=0} \lambda (s, \id) \cdot \hat \Phi (0).
$$
Since this quantity is independent of $g,$ and  $\Res_{s=0} \lambda (s,\id) = \frac{\mathrm{vol}(F^\times \backslash \A^1_F)}{n}$ (see Zhang~\cite[\S 3.1]{zhang}), we get
\begin{equation}\label{D-residue}
\begin{split}
\Res_{s=1} D(s, \varphi, \tilde \varphi, \Phi) & \ = \  \frac{\mathrm{vol}(F^\times \backslash \A^1_F) \cdot \hat \Phi (0)}{n} \cdot \int_{Z(\A) G(\Q) \backslash G(\A)} \varphi(g) \tilde \varphi (g) \ dg \\
& \ = \ \frac{\mathrm{vol}(F^\times \backslash \A^1_F) \cdot \hat \Phi (0)}{n}  \cdot \langle \varphi, \tilde \varphi \rangle.
\end{split}
\end{equation}

\medskip

\subsection{An integral representation of $L(1, {\rm Ad}^0, \pi)$}

Let $(\pi, V_\pi)$ and $(\tilde \pi, V_{\tilde \pi})$ be as above.  The Rankin--Selberg integrals have good analytic properties, however, they are not Eulerian. To see their relation with $L$-functions, we introduce Whittaker models. 
Recall, that we have fixed a non-trivial character 
$\psi : F\backslash \A_F \to \C^\times.$ A cuspidal automorphic representation of $G(\A)$ is globally generic, i.e., admits 
a Whittaker model. The map taking the $\psi$-Whittaker function of a cusp form gives an isomorphism 
$V_\pi \stackrel{\sim}{\longrightarrow} \calw(\pi, \psi).$ Similarly, we have  
$V_{\tilde \pi} \stackrel{\sim}{\longrightarrow} \calw(\tilde \pi, \psi^{-1}). $
The global Whittaker model decomposes into a restricted tensor product of local Whittaker models: 
 $ \calw (\pi, \psi) \cong \otimes' \calw (\pi_v, \psi_v)$ and 
 $ \calw (\tilde \pi, \psi^{-1}) \cong \otimes' \calw (\tilde \pi_v, \psi^{-1}_v).$ 
 
 At every finite place $v$, let $W_v$ and $\tilde W_v$ be the {\it essential} vectors of Jacquet, Piatetski-Shapiro and Shalika \cite{j-ps-s}. For $v \in S_\infty$, let $W_v$ and $\tilde W_v$ be arbitrary nonzero vectors for now; later these will be taken to be ``cohomological vectors." Choose cusp forms $\varphi \in V_\pi$ and $\tilde \varphi \in V_{\tilde \pi}$ which correspond to a tensor product of these local Whittaker vectors: 
 \begin{equation}
 \label{eqn:special-choice-vectors}
 \varphi \mapsto W = \otimes W_v \quad \text{ and } \quad \tilde \varphi \mapsto \tilde W = \otimes \tilde W_v. 
 \end{equation}
 We take the Schwartz--Bruhat function $\Phi$ to be a tensor product, $\otimes \Phi_v$, of local Schwartz--Bruhat functions. We further assume that for all finite places $v$, the function $\Phi_v$ is the characteristic function of $\calo_v^n$. For the infinite places, we take any $\Phi_v$ such that $\hat \Phi_v (0) \not = 0$.

A standard unfolding argument transforms the Rankin--Selberg integral considered above into a global zeta integral which is Eulerian, i.e.,  
\begin{equation}
\label{eqn:eulerian}
D(s, \varphi, \tilde \varphi, \Phi) \ = \ \prod_v \Psi_v ( s, W_v, \tilde W_v, \Phi_v), \quad \Re(s) \gg 0, 
\end{equation}
where the local zeta integrals are given by:
$$
\Psi_v ( s, W_v, \tilde W_v, \Phi_v) \ = \ \int_{N_0(F_v) \backslash G_0(F_v)} 
W_v(g_v) \tilde W_v(g_v) \Phi_v (e_n g_v) |\det \phantom{!} g_v|^s\ dg_v, 
$$
with $e_n = (0, \dots, 0, 1) \in F^n$. The local zeta integrals converge on a sufficiently large right half-plane and can be meromorphically continued to all of $\C$. When the representations $\pi$ and $\tilde \pi$ are unitary, it is known that the local zeta integrals converge for $\Re(s) \ge 1$ (see Jacquet--Shalika~\cite[(3.17)]{JS}). We can easily reduce to the unitary case. Since $\pi$ is cuspidal, we know that $\pi = {^u}\pi \otimes |\det |^{\sigma}$ and this implies that $\tilde \pi = \widetilde {{^u} \pi} \otimes |\det |^{-\sigma}$, where ${^u} \pi$ and ${^u}\tilde \pi=\widetilde {{^u} \pi}$ are unitary representations, and $\sigma \in \R.$ 
Hence, $L(s, \pi \times \tilde \pi), \ D(s, W, \tilde W, \Phi) $ and $\Psi_v (s, W_v, \tilde W_v, \Phi_v)$ are all the same as their unitary versions. As seen below, we will be interested in the value of this function at $s=1$. We recall from Zhang~\cite[\S 3.1]{zhang} that
$$
\Psi_v ( 1, W_v, \tilde W_v, \Phi_v) \ = \ \hat \Phi_v (0) \cdot \varTheta_v (W_v, \tilde W_v), 
$$
where, $\varTheta_v$ is given by the integral
$$
\varTheta_v (W_v, \tilde W_v) \ = \ \int_{N_{n-1}(F_v) \backslash \GL_{n-1}(F_v) } W \begin{pmatrix} h & \\ & 1 \end{pmatrix} \tilde W \begin{pmatrix} h & \\ & 1 \end{pmatrix} \ dh.
$$
We remark here that $\varTheta_v$ is $\GL_n(F_v)$-equivariant, i.e., $\varTheta_v(gW_v, g\tilde W_v) = \varTheta_v (W_v, \tilde W_v)$, for all $g \in \GL_n (F_v)$.

Let $S= S_\pi \cup S_\infty,$ where $S_\pi$ is the finite set of finite places where $\pi$ is ramified and 
$S_\infty$ is the finite set of all archimedean places of $F$. For any $v \notin S$,  we know that 
$\Psi_v ( s, W_v, \tilde W_v, \Phi_v) = L_v(s, \pi_v \times \tilde \pi_v)$; see \cite{Cog}. Multiplying and dividing the right hand side of (\ref{eqn:eulerian}) by the $L$-factors for $v \in S$, we get
\begin{equation}
\label{D-L-relation}
D(s, \varphi, \tilde \varphi, \Phi) \ = \ 
L(s, \pi \times \tilde \pi) \cdot \prod_{v \in S} \frac{\Psi_v (s, W_v, \tilde W_v, \Phi_v)}{L_v (s, \pi_v \times \tilde \pi_v)}, 
\quad \Re(s) \gg 0. 
\end{equation}
All the terms on the right hand side admit an analytic continuation to a meromorphic function to all of $\C$, hence it makes sense to evaluate (or take residues) at $s = 1.$ It is well known that $L(s, \pi \times \tilde \pi)$ has a simple pole at $s=1$. We have the factorization
\begin{equation}
\label{eqn:L-1-Ad-defn}
L(s, \pi \times \tilde \pi) = \tilde\zeta_F (s) \cdot L(s, \Ad^0, \pi), 
\end{equation}
which defines $L(s, \Ad^0, \pi);$ here $\tilde\zeta_F(s)$ is the Dedekind zeta function of $F$. The simple pole at $s=1$ of 
$L(s, \pi \times \tilde \pi)$ comes from the simple pole at $s=1$ of $\tilde\zeta_F(s).$ Take residues at $s=1$ to get 
\begin{equation}
\label{L-residue}
\Res_{s=1} L(s, \pi \times \tilde \pi) \ = \ c_F \cdot L(1, \Ad^0, \pi)
\end{equation}
where $c_F = \Res_{s=1} \tilde\zeta_F(s)$; note, in particular, that $L(1, \Ad^0, \pi) \neq 0.$ 

For any $v\in S$, from the inputs going into the local functional equation, we know that the quotient 
$\Psi_v (s, W_v, \tilde W_v, \Phi_v)/L_v (s, \pi_v \times \tilde \pi_v)$ admits a continuation to an entire function. Define
\begin{equation}
\label{eqn:local-constants}
c_v(W_v,\tilde W_v, \Phi_v): = \left.  \frac{\Psi_v (s, W_v, \tilde W_v, \Phi_v)}{L_v (s, \pi_v \times \tilde \pi_v)} \right|_{s=1} = \frac{\hat \Phi_v (0) \varTheta_v (W_v, \tilde W_v)}{L_v (1, \pi_v \times \tilde \pi_v)}, 
\end{equation}
and 
\begin{equation}
\label{eqn:new-local-constants}
c^\#_v(W_v,\tilde W_v): = \frac{\varTheta_v (W_v, \tilde W_v)}{L_v (1, \pi_v \times \tilde \pi_v)}. 
\end{equation}
It is well known that if ${^u}\pi_v$ is a unitary, generic, irreducible representation of $\GL_n(F_v)$ then 
$L(1, {^u}\pi_v \times {^u}\tilde\pi_v)$ is finite; hence for us, $L_v (1, \pi_v \times \tilde \pi_v)$ is finite. 

Taking residues at $s=1$ on both sides of (\ref{D-L-relation}) and using (\ref{D-residue}), (\ref{L-residue}) and 
(\ref{eqn:local-constants}), we get the following integral representation of $L(1, \Ad^0, \pi)$: 
\begin{equation}
\label{eqn:integral-repn-L-1-ad}
c_F  \cdot  \left(\prod_{v \in S} \fr c_v (W_v, \tilde W_v, \Phi_v) \right)  \cdot L(1, \Ad^0, \pi) \ = \ 
\frac{\mathrm{vol}(F^\times \backslash \A^1_F) \cdot \hat \Phi_f (0) \cdot \hat \Phi_\infty (0)}{n}  \cdot \langle \varphi, \tilde \varphi \rangle.
\end{equation}
We will see in \S\,\ref{sec:ramified} below that, for $v \in S_\pi$ and for our special choice of local Whittaker vectors, one has  $\fr c_v (W_v, \tilde W_v, \Phi_v) \neq 0$ . Define
\begin{equation}
\label{eqn:omega-ram}
\fr p_\ram (\pi) \ := \  \left(\prod_{v \in S_\pi} \fr c_v(W_v, \tilde W_v, \Phi_v)\right)^{-1}.
\end{equation}
Define: 
\begin{equation}
\label{eqn:omega-F}
\omega_F \ := \  \frac{\mathrm{vol}(F^\times \backslash \A^1_F) \cdot \hat \Phi_f(0)}{n\cdot c_F} 
\end{equation}
Since $\hat \Phi_\infty (0) \not = 0$, the integral representation for $L(1, \Ad^0, \pi)$ can be re-written as: 
\begin{equation}
\label{eqn:integral-repn-L-1-ad-again}
\frac{\prod_{v \in S_\infty} \fr c^\#_v (W_v, \tilde W_v) \cdot L(1, \Ad^0, \pi)}{\omega_F \cdot \fr p_\ram (\pi)} 
\ = \ 
\langle \varphi, \tilde \varphi \rangle, 
\end{equation}
for the special choice of cusp forms and Schwartz--Bruhat functions made in (\ref{eqn:special-choice-vectors}). 
The main algebraicity theorem that we prove (see Thm.\,\ref{thm:l-value} below) involves giving a cohomological interpretation to this integral representation of $L(1, {\rm Ad}^0, \pi).$

\medskip
\subsection{Ramified calculations}
\label{sec:ramified}

In this section, we study the quantities $\fr c_v (W_v, \tilde{W_v}, \Phi_v)$ for $v \in S_\pi$ and justify the definition of 
$\fr p_\ram(\pi)$ in (\ref{eqn:omega-ram}). Throughout this section, $\Phi_v$ will be the characteristic function of $\calo_v^n,$ and will often be suppressed from the notation. This is a purely local statement, and furthermore recall that a local component of a global cuspidal representation of 
$\GL_n$ is generic, i.e., admits a local Whittaker model.  For most of this subsection, we suppress the subscript $v$ and 
let $\pi$ be an irreducible admissible generic representation of $\GL_n(F)$ for a non-archimedean local field $F$. 
The essential vector in a Whittaker model of $\pi$ will be denoted $W(\pi).$ 
For later use in studying algebraicity properties, we will also study the behavior of $\fr c (W(\pi), W(\tilde\pi))$
under any $\sigma \in \Aut(\C).$ Given $\pi$, for the definition and some basic properties of the 
conjugated representation ${}^\sigma\pi$, the reader is referred to Clozel~\cite{clozel} or Waldspurger~\cite{waldy}; 
see also the discussion in Grobner--Raghuram~\cite[Sect.\,7.1]{grobner-raghuram-ijnt}. 
We omit the proofs of the following two lemmas which may be proved along the lines of the proof of 
\cite[Prop.\,3.17]{raghuram-imrn}. 

\begin{lemma}
\label{1st-lemma}
(We have currently adopted local notations.) Let $\pi$ be an irreducible admissible generic representation of $\GL_n(F).$ Then 
 $$
 \sigma \left(L(a, \pi \times \tilde{\pi}) \right) \ = \ 
 L(a, {^\sigma} \pi \times {^\sigma} \tilde{\pi}),\quad \forall a \in \Z.
$$
\end{lemma}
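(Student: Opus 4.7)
The plan is to write $L(s, \pi \times \tilde\pi) = P_\pi(q^{-s})^{-1}$ for a polynomial $P_\pi(X) \in \C[X]$ with $P_\pi(0)=1$, and to establish the key identity
$$
P_{{}^\sigma \pi}(X) \ = \ {}^\sigma P_\pi(X),
$$
where ${}^\sigma P_\pi$ denotes the polynomial obtained by applying $\sigma$ coefficient-wise to $P_\pi$. Once this is granted, the lemma is immediate: for $a \in \Z$ we have $\sigma(q^{-a}) = q^{-a}$, hence $\sigma(P_\pi(q^{-a})) = ({}^\sigma P_\pi)(q^{-a}) = P_{{}^\sigma\pi}(q^{-a})$, and consequently $\sigma(L(a,\pi \times \tilde\pi)) = L(a, {}^\sigma\pi \times \widetilde{{}^\sigma\pi})$, which equals $L(a, {}^\sigma\pi \times {}^\sigma\tilde\pi)$ via the standard identification ${}^\sigma\tilde\pi \cong \widetilde{{}^\sigma\pi}$.

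To establish the key identity, I would first reduce to the essentially square-integrable case. Using the Langlands classification, write $\pi$ as the Langlands quotient of $\pi_1 \times \cdots \times \pi_k$ with each $\pi_i$ essentially square-integrable; by multiplicativity of local $L$-factors for generic representations (Jacquet, Piatetski-Shapiro and Shalika; cf.\,\cite{Cog}) one has
$$
L(s, \pi \times \tilde\pi) \ = \ \prod_{i,j} L(s, \pi_i \times \tilde\pi_j).
$$
Since parabolic induction and the contragredient both commute with the functor $\pi \mapsto {}^\sigma\pi$, and since ${}^\sigma\pi_i$ is again essentially square-integrable, the identity for $\pi$ reduces to the identity for each factor $\pi_i$, i.e.\,to the essentially square-integrable case.

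For essentially square-integrable $\pi$, one writes $\pi = \mathrm{St}(\rho, k)$, the Steinberg-type representation attached to a segment of length $k$ over a supercuspidal $\rho$. Then $L(s, \pi \times \tilde\pi)$ has an explicit product expression in factors of the form $(1 - \alpha q^{-s})^{-1}$, where each $\alpha$ is an algebraic number built from the central character of $\rho$ evaluated at a uniformizer. The identities ${}^\sigma\mathrm{St}(\rho,k) = \mathrm{St}({}^\sigma\rho,k)$ and $({}^\sigma\rho)^\vee \cong {}^\sigma(\rho^\vee)$, together with the $\sigma$-equivariance of the central character at a uniformizer, yield the desired $\sigma$-equivariance of the coefficients of $P_\pi$. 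The argument runs parallel to \cite[Prop.\,3.17]{raghuram-imrn}, specialized to $\pi' = \tilde\pi$.

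The main obstacle I anticipate is the essentially square-integrable case just described: one must carefully match the scalars entering the explicit $L$-factor with the data of the Langlands--Zelevinsky classification of $\pi$ and track how each such scalar transforms under $\sigma$. The assumption $a \in \Z$ is essential, because for non-integer $a$ the quantity $q^{-a}$ is transcendental and need not be fixed by a general $\sigma \in \Aut(\C)$, so the statement of the lemma would not even make sense.
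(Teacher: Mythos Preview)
Your overall strategy---reduce via the Langlands/Zelevinsky classification and multiplicativity of local $L$-factors to essentially square-integrable building blocks, and then compute explicitly---is exactly the route the paper has in mind (it points to \cite[Prop.\,3.17]{raghuram-imrn}), and your observation that the restriction $a\in\Z$ is what makes $q^{-a}$ fixed by $\sigma$ is the right reason the statement works.

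There is, however, a genuine imprecision in your reduction step. You assert that ``parabolic induction \dots\ commutes with the functor $\pi\mapsto{}^\sigma\pi$'' and that ${}^\sigma\mathrm{St}(\rho,k)=\mathrm{St}({}^\sigma\rho,k)$. Neither is literally true: normalized parabolic induction involves $\delta_P^{1/2}$, hence $q^{1/2}$, and applying $\sigma$ introduces the quadratic character $\epsilon_\sigma(x)=|x|^{1/2}/\sigma(|x|^{1/2})$. Concretely (see Clozel~\cite{clozel}),
\[
{}^\sigma(\pi_1\times\cdots\times\pi_k)\ \cong\ ({}^\sigma\pi_1\cdot\epsilon_\sigma^{\,n-n_1})\times\cdots\times({}^\sigma\pi_k\cdot\epsilon_\sigma^{\,n-n_k}),
\]
and likewise ${}^\sigma Q(\Delta)=Q({}^\sigma\!\Delta)$ where the supercuspidal in ${}^\sigma\!\Delta$ is ${}^\sigma\tau\cdot\epsilon_\sigma^{\,m(r-1)}$, not just ${}^\sigma\tau$. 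So your reduction does not give $\prod_{i,j}L(s,{}^\sigma\pi_i\times{}^\sigma\tilde\pi_j)$ on the nose.

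The reason the lemma nonetheless holds is a cancellation specific to the pair $(\pi,\tilde\pi)$: pushing the computation down to the supercuspidal level, one has $L(a,\pi\times\tilde\pi)=\prod_{i,j}L(b_{ij},\tau_i\times\tilde\tau_j)$ with $b_{ij}\in\Z$, and only pairs with $\tau_i\cong\tau_j$ (hence $m_i=m_j$) contribute nontrivially. For such pairs the combined $\epsilon_\sigma$-twist is $\epsilon_\sigma^{\,2(n-m_i)}=1$ since $\epsilon_\sigma$ is quadratic, and then one is reduced to $\sigma(L(b,\chi))=L(b,{}^\sigma\chi)$ for unramified $\chi$ and $b\in\Z$. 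You should make this $\epsilon_\sigma$-bookkeeping explicit; without it the reduction step, as written, is not correct.
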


\medskip

\begin{lemma}
\label{2nd-lemma}
(We have currently adopted local notations.) Suppose $\pi$ is an irreducible, admissible, generic representation of 
$\GL_n(F)$, and $L(s, \pi) = \prod_{i = 1}^l (1 - \alpha_i q^{-s})^{-1}$. Define $\pi^{\rm un}$ to be the 
irreducible spherical representation of $\GL_l(F)$ with Satake parameters $\alpha_1, \dots, \alpha_l.$ 
For any $\sigma \in \Aut(\C)$ we have
$$
{}^\sigma (\pi^\mathrm{un}) = ({^\sigma} \pi)^\mathrm{un}\cdot \epsilon_\sigma^{n-l}, 
$$
where $\epsilon_\sigma : F^\times \to \C^\times$ is the quadratic character defined as 
$\epsilon_\sigma(x) = |x|^{1/2}/\sigma(|x|^{1/2})$
\end{lemma}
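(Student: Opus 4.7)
The plan is to compute the Satake parameters of both ${^\sigma}(\pi^{\mathrm{un}})$ and $({^\sigma}\pi)^{\mathrm{un}}$ from the Langlands data of $\pi$, carefully tracking the $\epsilon_\sigma$-twists introduced by the $\delta^{1/2}$-normalization of parabolic induction; the lemma will then follow because the two lists of parameters differ by a uniform scalar factor, which corresponds exactly to a global $\det$-twist.

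Since $\pi$ is generic, I would write it as a full normalized induction $\pi = \mathrm{Ind}_P^{\GL_n(F)}(\delta_1 \otimes \cdots \otimes \delta_k)$, with each $\delta_j$ an essentially discrete-series representation of $\GL_{n_j}(F)$ and $\sum n_j = n$. The $L$-factor decomposes as $L(s,\pi) = \prod_j L(s,\delta_j)$, and its degree $l$ equals the number of components $\delta_j = \mathrm{St}(\chi_j, n_j)$ whose inducing character $\chi_j$ of $F^\times$ is unramified; each such component contributes one Satake parameter $\alpha_j = \chi_j(\varpi)\, q^{-(n_j-1)/2}$. Applying $\sigma$ to the normalized induction replaces $\delta_j$ by ${^\sigma}\delta_j$ twisted by $\epsilon_\sigma^{c_j}\circ\det$, where $c_j = n - n_j - 2\sum_{i<j} n_i$ is the exponent of $|\det g_j|$ in $\delta_P$; the $\epsilon_\sigma^{c_j}$ factor arises from $\sigma(|x|^{c_j/2}) = |x|^{c_j/2}\epsilon_\sigma(x)^{c_j}$. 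Combined with the parallel observation that ${^\sigma}\mathrm{St}(\chi_j, n_j) = \mathrm{St}({^\sigma}\chi_j, n_j)$ (obtained by a uniform-parity argument applied to the segment induction defining $\mathrm{St}(\chi_j,n_j)$), the $j$-th Satake parameter of ${^\sigma}\pi$ becomes ${^\sigma}\chi_j(\varpi)\,\epsilon_\sigma(\varpi)^{c_j}\, q^{-(n_j-1)/2}$. The key arithmetic observation is
\[
c_j - (n_j - 1) \ \equiv \ n + 1 \pmod{2} \qquad \text{for every } j,
\]
so, using $\epsilon_\sigma^2 = 1$, this Satake parameter equals $\sigma(\alpha_j)\,\epsilon_\sigma(\varpi)^{n+1}$ uniformly in $j$. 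In particular $l$ is preserved, and $({^\sigma}\pi)^{\mathrm{un}}$ is the spherical representation of $\GL_l(F)$ with Satake parameters $\sigma(\alpha_j)\,\epsilon_\sigma(\varpi)^{n+1}$.

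Applying an identical calculation to $\pi^{\mathrm{un}}$ itself, realized as the spherical constituent of a normalized principal series on $\GL_l(F)$ with all $n_j = 1$ and $n$ replaced by $l$, the Satake parameters of ${^\sigma}(\pi^{\mathrm{un}})$ come out to be $\sigma(\alpha_j)\,\epsilon_\sigma(\varpi)^{l+1}$. They differ from those of $({^\sigma}\pi)^{\mathrm{un}}$ by the uniform factor $\epsilon_\sigma(\varpi)^{(l+1)-(n+1)} = \epsilon_\sigma(\varpi)^{n-l}$, and a uniform scaling of every Satake parameter of a spherical representation by $\epsilon_\sigma(\varpi)^{n-l}$ amounts precisely to tensoring with the character $\epsilon_\sigma^{n-l}\circ\det$. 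This establishes ${^\sigma}(\pi^{\mathrm{un}}) = ({^\sigma}\pi)^{\mathrm{un}}\cdot\epsilon_\sigma^{n-l}$.

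The main obstacle is the mod-$2$ uniformity identity $c_j-(n_j-1)\equiv n+1\pmod 2$: although the individual exponents $c_j$ need not have a common parity, their combination with $n_j-1$ does, and it is precisely this that allows the disparate per-component $\epsilon_\sigma$-twists coming from $\delta_P^{1/2}$ and from the segment induction defining $\mathrm{St}(\chi_j,n_j)$ to align into a single global character $\epsilon_\sigma^{n+1}\circ\det$. The remaining non-Steinberg-type components (those $\delta_j$ built from higher-dimensional supercuspidals) contribute trivial $L$-factors to both $\pi$ and ${^\sigma}\pi$, so they do not affect either the count $l$ or the Satake-parameter comparison.
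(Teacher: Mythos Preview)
Your argument is correct and follows essentially the same route as the paper's (draft) proof: decompose $\pi$ into essentially discrete-series inducing data, track the $\epsilon_\sigma$-twists that $\sigma$ introduces through the $\delta_P^{1/2}$-normalization of parabolic induction, and compare exponents modulo $2$. The paper packages the same computation via Clozel's Tate-twisted induction, obtaining the clean formula ${}^\sigma(\pi_1\times\cdots\times\pi_k)=({}^\sigma\pi_1\,\epsilon_\sigma^{n-n_1})\times\cdots\times({}^\sigma\pi_k\,\epsilon_\sigma^{n-n_k})$ and then reading off the inducing characters of $({}^\sigma\pi)^{\mathrm{un}}$ and ${}^\sigma(\pi^{\mathrm{un}})$ directly; your exponents $c_j$ agree with the paper's $n-n_j$ modulo $2$, and your parity identity $c_j-(n_j-1)\equiv n+1\pmod 2$ is exactly the observation that for the unramified-contributing blocks one has $m_j=1$ so the combined exponent is $n-1$, uniformly in $j$.
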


Consider the numerator of $\fr c(W(\pi), W(\tilde{\pi}))$ as in (\ref{eqn:local-constants}).  Greni\'e \cite[p.\,306]{Gr} showed that
$$
\Psi(1, W(\pi), W(\tilde{\pi}), \Phi) \ = \ L(j, \pi^{\mathrm{un}} \times \tilde \pi^{\mathrm{un}}), \quad \text{ where } j=n+1-l
$$
for the $\Phi$ fixed at the beginning of this section.
In particular, $\Psi(1, W(\pi), W(\tilde{\pi}), \Phi) \neq 0$. Furthermore, as mentioned above, we know 
that $L(1, \pi \times \tilde\pi)$ is finite (i.e., not a pole). Hence, 
$\fr c(W(\pi), W(\tilde{\pi})) \neq 0,$ justifying the definition in (\ref{eqn:omega-ram}). 

\smallskip

For rationality properties apply $\sigma \in \Aut (\C)$ to see: 
\begin{equation}
\label{eqn:local-constant-rational}
\begin{split}
\sigma (\fr c (W (\pi), W(\tilde{\pi}))) 
& \ = \ \frac{\sigma (\Psi (1, W(\pi), W(\tilde{\pi}), \Phi))}{\sigma(L(1, \pi \times \tilde{\pi}))} 
\ = \ \frac{\sigma(L(j, \pi^{\mathrm{un}} \times \tilde \pi^{\mathrm{un}}))} {\sigma(L(1, \pi \times \tilde{\pi}))} \\
&\ = \ \frac{L(j,{^\sigma} (\pi^{\mathrm{un}}) \times {^\sigma}(\tilde \pi^{\mathrm{un}}))}{L(1, {^\sigma} \pi \times {^\sigma} \tilde{\pi})} \quad \text{[by Lemma \ref{1st-lemma}]}  \\
& \ = \ \frac{L(j,({^\sigma}\pi)^{\mathrm{un}} \times ({^\sigma}\tilde \pi)^{\mathrm{un}})}{L(1, {^\sigma} \pi \times {^\sigma} \tilde{\pi})} \quad \text{[by Lemma \ref{2nd-lemma}]}  \\
& \ = \ \frac{\Psi (1, W({^\sigma} \pi), W({^\sigma} \tilde{\pi}), \Phi)}{L(1, {^\sigma} \pi \times {^\sigma} \tilde{\pi})} \\
& \ = \ \fr c (W ({^\sigma} \pi), W({^\sigma} \tilde{\pi})).
\end{split}
\end{equation}

\smallskip

Going back to the global situation, suppose $\pi$ is a cuspidal automorphic representation of cohomological type, then it will make sense to consider ${}^\sigma\!\pi$, which would have the property that 
$({}^\sigma\!\pi)_v = {}^\sigma\!(\pi_v);$ see Clozel~\cite[Thm.\,3.13]{clozel}.  From 
(\ref{eqn:local-constant-rational}) applied to every $v \in S_\pi$, we conclude
\begin{equation}
\label{eqn:omega-ran-rational}
\sigma (\fr p_\ram (\pi)) \  = \ \fr p_\ram ({}^\sigma\!\pi).
\end{equation}
We also remark that this last equation can also be obtained using the relationship between the local zeta integrals and the integrals $\varTheta$, as done in \cite{GHL}.

\bigskip
\section{\bf Whittaker models and automorphic cohomology}
\label{whitt-models}

The main purpose of this section is to give a cohomological interpretation to the integral representation 
in (\ref{eqn:integral-repn-L-1-ad-again}) for $L(1, {\rm Ad}^0, \pi).$

\subsection{The basic set-up to study the cohomology of arithmetic groups}
\label{sec:basic-setup}
We briefly review the basic set-up and for all the details we refer the reader to 
\cite[\S\,1]{harder-raghuram-cras} and \cite[\S\,2.3]{raghuram-2013}. 
For any open compact subgroup $K_f$ of $G(\A_f)$, consider the space
$$
S_{K_f}^G = G(\Q) \backslash G(\A)/K_\infty^\circ K_f, 
$$ 
where $K_\infty = C_\infty S (\R)$ and $C_\infty$ is a maximal compact subgroup of $G_\infty = G(\R)$.
The connected component of the identity in $K_\infty$, denoted $K_\infty^\circ$, is isomorphic to 
$({\rm SO}_n(\R)^{r_1} \times {\rm U}_n(\C)^{r_2}) \cdot \R_{>0},$ where $r_1$ and $r_2$ are the number of real and complex embeddings of $F$ and $r_1+2r_2 = d$. We identify $K_\infty/K_\infty^\circ =: \pi_0(K_\infty) = \pi_0(G(\R)).$ 

Let $E$ be as in the introduction, which will be viewed as a subfield of $\C$ via $\iota.$ 
Let $\lambda \in X^+_{00}(T)$ be a strongly pure dominant-integral weight; see \cite[Def.\,2.5]{raghuram-2013}. Let 
$\calm_{\lambda, E}$ be the algebraic irreducible representation of $G$ over the field $E$ with highest weight 
$\lambda.$ Let $\tilde\lambda$ be the `dual' weight which is a dominant integral weight that is 
the highest weight of the contragredient of 
$\calm_{\lambda, E}.$ Let $\tcalm_{\lambda, E}$ denote the associated sheaf on $S_{K_f}^G.$ We are interested in the cohomology groups 
$H^{\sbt}(S_{K_f}^G, \tcalm_{\lambda, E}).$ The image of cohomology with compact supports in full cohomology is called inner or interior cohomology: 
$
H^{\sbt}_!(S_{K_f}^G, \tcalm_{\lambda, E}) \ := \ 
{\rm Image}\left(H^{\sbt}_c(S_{K_f}^G, \tcalm_{\lambda, E}) \ \to \ H^{\sbt}(S_{K_f}^G, \tcalm_{\lambda, E})\right). 
$
There is a Hecke action, i.e., an action of $\HGK \times \pi_0(G(\R))$ on $H^{\sbt}(S_{K_f}^G, \tcalm_{\lambda, E})$ which stabilizes $H^{\sbt}_!$, and furthermore, inner cohomology is semi-simple for this Hecke action. For all these assertions and with more details, see Harder--Raghuram~\cite[\S 2]{harder-raghuram-preprint}.

\smallskip

Passing to a transcendental level via $\iota$, we also have cuspidal cohomology: 
$$
H^{\sbt}_{\rm cusp}(S_{K_f}^G, \tcalm_{\lambda, \C}) \ \subset \ 
H^{\sbt}_!(S_{K_f}^G, \tcalm_{\lambda, \C}) \ \subset \ 
H^{\sbt}(S_{K_f}^G, \tcalm_{\lambda, \C}).
$$
The definition of cuspidal cohomology is via relative Lie algebra cohomology, in terms of which, 
as $\pi_0(G_n(\R)) \times \HGK$-modules, we have 
\begin{equation}
\label{eqn:cusp-coh}
H^{\sbt}_{\rm cusp}(S^G_{K_f}, \tcalm_{\lambda,\C}) \ = \ 
\bigoplus_{\pi} H^{\sbt}(\g, K_{\infty}^\circ;  \pi_{\infty} \otimes  \calm_{\lambda,\C}) \otimes \pi_f^{K_f}. 
\end{equation}
We say that $\pi$ contributes to the cuspidal cohomology of $G$ with coefficients in $\calm_{\lambda,\C}$, and 
we write $\pi \in {\rm Coh}(G, \lambda, K_f)$,  
if $\pi$ has a nonzero contribution to the above decomposition; equivalently, if $\pi_{\infty}$ after twisting by 
$\calm_{\lambda,\C}$ has nontrivial relative Lie algebra cohomology and $\pi_f$ has $K_f$-fixed vectors. 
If $\pi \in {\rm Coh}(G, \lambda, K_f)$, then via well-known results of Clozel~\cite{clozel}, we have explicit knowledge of 
$\pi_\infty$ 
(see \cite[Sect.\,2.4]{raghuram-2013}), from which one can compute the relative Lie algebra cohomology groups.  
Define the numbers $b_n^F$ and $\tilde{t}_n^F$ as in \cite[Prop.\,2.14]{raghuram-2013}. 
Then we have 
$$
H^{\sbt}_{\rm cusp}(S^G_{K_f}, \tcalm_{\lambda,\C}) \neq 0 \ \iff \ 
b_n^F \leq \sbt \ \leq \tilde{t}_n^F. 
$$
For brevity, let $b=b_n^F$ and $t= \tilde{t}_n^F$ denote the lower and upper end of the ``cuspidal range." 
Note that $b + t = \dim(\SGK)$.

\medskip

\subsection{Betti--Whittaker periods}
\label{sec:betti-whittaker}

\subsubsection{\bf Comparing Whittaker models and cohomological models}
\label{sec:whit-coh}
Let $\pi \in {\rm Coh}(G, \lambda, K_f)$ as above, and we focus only on the extreme degrees 
$\sbt \in \{b, t\}.$ If $n$ is even then every character $\varepsilon = (\varepsilon_v)_{v \in S_r}$ of $\pi_0(K_{\infty})$ 
appears  once in $H^{\sbt}(\g, K_{\infty}^0;  \pi_{\infty} \otimes  \calm_{\lambda,\C});$ hence,   
$$
H^{\sbt}_{\rm cusp}(S^{G}_{K_f}, \tcalm_{\lambda, \C}) \ = 
\bigoplus_{\pi  \in {\rm Coh}(G, \lambda, K_f)}
\bigoplus_{\varepsilon \,  \in \, \widehat{\pi_0(K_{\infty})}} 
\varepsilon \otimes \pi_f^{K_f}, \quad \mbox{(when $n$ is even)}. 
$$
Note that each $\pi_f^{K_f}$ appears $2^{r_1}$ times in cuspidal cohomology in degree $b$ or $t$.  
However, if $n$ is odd then $H^{\sbt}(\g, K_{\infty}^0;  \pi_{\infty} \otimes  \calm_{\lambda,\C})$ is one-dimensional and the character of $\pi_0(K_{\infty})$ which appears is denoted as $\varepsilon_{\pi_\infty}$. In this case, cuspidal cohomology decomposes as 
$$
H^{\sbt}_{\rm cusp}(S^{G}_{K_f}, \tcalm_{\lambda, \C}) = \bigoplus_{\pi  \in {\rm Coh}(G, \lambda, K_f)}
\varepsilon_{\pi_\infty} \otimes \pi_f^{K_f}, \quad \mbox{(when $n$ is odd)}.
$$
Following \cite{RS}, an $r_1$-tuple of signs $\varepsilon = (\varepsilon_v)_{v\in S_r}$ is said to be 
{\it permissible} for $\lambda$,  if $\varepsilon = \varepsilon_{\pi_\infty}$ for some 
$\pi \in {\rm Coh}(G, \lambda, K_f)$ when $n$ is odd, and is any of the possible $2^{r_1}$ signatures when $n$ is even. 
For any such permissible signature,  we then have the following series of isomorphisms:
\begin{equation}\label{big-iso}
\begin{split}
\calw(\pi_f)^{K_f} &\longrightarrow 
\calw(\pi_f)^{K_f} \otimes H^{\sbt} (\fr g, K^\circ_\infty; \calw(\pi_\infty) \otimes \calm_{\lambda, \C})(\varepsilon) 
\longrightarrow 
H^{\sbt} (\fr g, K^\circ_\infty; \calw(\pi)^{K_f} \otimes \calm_{\lambda, \C})(\varepsilon) \\
&\longrightarrow 
H^{\sbt} (\fr g, K^\circ_\infty; V_\pi^{K_f} \otimes \calm_{\lambda, \C})(\varepsilon)  
\longrightarrow 
H^{\sbt} (\fr g, K^\circ_\infty;  \cala_0^\infty (\omega)^{K_f} \otimes \calm_{\lambda, \C})(\pi_f \times \varepsilon) \\
&\longrightarrow 
H_!^{\sbt} (\SGK, \tcalm_{\lambda,\C})(\pi_f \times \varepsilon). 
\end{split}
\end{equation}
We denote by $\calf_{\sbt, \varepsilon} (\pi_f)$ the isomorphism obtained by composing all these maps.

\smallskip

\subsubsection{\bf Fixing a basis at infinity}
Note that the first map in the isomorphism above is given by fixing a basis element $[\pi_{\infty}]^\varepsilon$ for the 
one-dimensional vector space $H^{\sbt} (\fr g, K^\circ_\infty; \calw(\pi_\infty) \otimes \calm_{\lambda,\C})(\varepsilon)$. We describe in detail how to do this for the bottom cohomology degree. By the K\"unneth theorem, we have
$$
H^{b} (\fr g, K^\circ_\infty; \calw(\pi_\infty) \otimes \calm_{\lambda,\C})(\varepsilon) \ = \ 
\bigotimes_{v \in S_\infty} H^{b_v} (\fr g_v, K^\circ_v; \calw(\pi_v) \otimes \calm_{\lambda_v, \C})(\varepsilon_v).
$$
Take $[\pi_{\infty}]^\varepsilon = \otimes [\pi_v]^{\varepsilon_v}$, where $[\pi_v]^{\varepsilon_v}$ is a basis for the 
$1$-dimensional space
\begin{equation*}
\begin{split}
H^{b_v} (\fr g_v, K^\circ_v; \calw(\pi_v) \otimes \calm_{\lambda_v, \C})(\varepsilon_v) 
& \ = \ \Hom_{K_v^\circ} (\wedge^{b_v} \fr g_v/\fr k_v , \calw (\pi_v) \otimes \calm_{\lambda_v, \C} )(\varepsilon_v) \\
&\ = \ \left[ \wedge^{b_v} (\fr g_v/\fr k_v)^\sv \otimes \calw (\pi_v) \otimes \calm_{\lambda_v, \C} \right]^{K_v^\circ}(\varepsilon_v).
\end{split}
\end{equation*}

If $\{X_{k,v} \}$ is a basis for $\fr g_v/\fr k_v$ and 
$\{X_{k,v}^\sv \}$ is the dual basis for $(\fr g_v/\fr k_v)^\sv$,  and if $\{m_{\alpha_v}\}$ is a basis for 
$\calm_{\lambda_v,\C}$ then we can write
$$
[\pi_v]^{\varepsilon_v} = \sum_{\underline{i_v}, \alpha_v} X^\sv_{\underline{i_v}} \otimes W_{\underline{i_v}, \alpha_v, \varepsilon_v, v} \otimes m_{\alpha_v}, 
$$
where $X^\sv_{\underline{i_v}} = X^\sv_{i_1, v} \wedge X^\sv_{i_2,v} \wedge \cdots  \wedge X^\sv_{i_{b_v},v}$ for 
$\underline{i_v} = (i_1, i_2, \cdots, i_{b_v} ).$ 

Similarly, consider the contragredient representation 
$\tilde \pi$ and suppose $\eta$ is a permissible signature for $\tilde \pi.$ We fix a generator for the top-degree cohomology for $\tilde \pi$: 
$$
[\tilde \pi_v]^{\eta_v} = \sum_{\underline{j_v}, \beta_v} X^\sv_{\underline{j_v}} \otimes \tilde W_{\underline{j_v}, \beta_v, \eta_v, v} \otimes \tilde m_{\beta_v}. 
$$

\smallskip

\subsubsection{\bf The Betti--Whittaker periods}
If $\pi \in {\rm Coh}(G, \lambda, K_f)$ and $\sigma \in \Aut(\C)$ then by a well-known result of 
Clozel (\cite[Thm.\,3.13]{clozel}) we have 
${}^\sigma\pi \in {\rm Coh}(G, {}^\sigma\lambda, K_f)$.  
Following \cite[Def./Prop.\,3.3]{RS}, the Betti--Whittaker periods $\fr p^\epsilon (\pi) \in \C^\times$  can be chosen in such a way that the following diagram commutes:
$$
\xymatrix@1{
\calw(\pi_f) \ar[rrr]^-{\calf^\circ_{b,\varepsilon} (\pi)} \ar[d]^-{\sigma} &&& 
H_!^b (\SGK, \tcalm_{\lambda, \C} )(\pi_f \times \varepsilon) \ar[d]^-{\sigma} \\
\calw(\leftidx{^\sigma} \pi_f) \ar[rrr]^-{\calf^\circ_{b, \varepsilon} (\leftidx{^\sigma} \pi_f)} 
&&& H_!^b (\SGK, \tcalm_{{^\sigma} \lambda, \C} )(\leftidx{^\sigma} \pi_f \times \varepsilon)
}$$
where $\calf^\circ_{b,\varepsilon} (\pi) = \calf_{b,\varepsilon} (\pi)/\fr p^\varepsilon (\pi)$ and 
$\calf^\circ_{b, \varepsilon} (\leftidx{^\sigma} \pi_f) = 
\calf_{b, \varepsilon} (\leftidx{^\sigma} \pi_f)/\fr p^{\varepsilon} (\leftidx{^\sigma} \pi)$, which preserve rational structures on either side. The rationality field $\Q(\pi)$ is defined as in 
\cite[\S\,7.1]{grobner-raghuram-ijnt}. 
The collection $\{p^{\varepsilon}({}^\sigma\pi) : \sigma \in \Aut(\C) \}$ is well-defined in 
$(\Q(\pi) \otimes \C)^\times/\Q(\pi)^\times.$
Similarly, we have the Betti--Whittaker periods $\fr q^\eta(\tilde \pi)$ for $\tilde \pi$ and $\eta$ by considering cuspidal cohomology in top-degree. (Let's note here that in \cite{RS}, the right vertical arrow mapped the 
$\pi_f \times \varepsilon$-isotypic component to the ${}^\sigma\!\pi_f \times {}^\sigma\!\varepsilon$-isotypic component; this is a small mistake 
as the map induced by $\sigma$ in cohomology maps the $\pi_f \times \varepsilon$-isotypic component to the 
${}^\sigma\!\pi_f \times \varepsilon$-isotypic component; see \cite{raghuram-2013} for more explanations on this 
issue.)

\smallskip

\subsubsection{\bf The rational cohomology classes}
Recall that for any finite place $v$, $W_v = W_{\pi_v}$ stands for the essential vector of $\pi_v$ in its $\psi_v$-Whittaker model
$\calw(\pi_v, \psi_v).$ Let $W_f = \otimes W_v \in \calw(\pi_f).$ 
Similarly, $\tilde W_v = W_{\tilde \pi_v}$ in $\calw(\tilde \pi_v, \psi_v^{-1})$ is the essential vector, 
and $\tilde W_f  = \otimes \tilde W_v \in  \calw(\tilde \pi_f).$ It follows from \cite[Sect.\,3.1.3]{raghuram-imrn} and 
\cite[Prop.\,3.21]{raghuram-imrn} that for any $\sigma \in \Aut(\C)$ we have 
${}^\sigma W_{\pi_v} = W_{{}^\sigma\pi_v}.$ 
Define: 
$$
\vartheta_{b, \varepsilon} = \calf_{b, \varepsilon} (W_f) \quad {\rm and} \quad 
\tilde \vartheta_{t, \eta} = \calf_{t,\eta} (\tilde W_f), 
$$
where it is understood that $\varepsilon$ and $\eta$ are permissible signatures. It follows from the definition of the periods, and the above rationality properties of $W_f$ and $\tilde W_f$ that the normalized classes: 
$$
\vartheta^\circ_{b,\varepsilon} \ := \ \vartheta_{b, \varepsilon}/\fr p^\varepsilon (\pi)  \quad {\rm and} \quad
\tilde \vartheta^\circ_{t,\eta} \ := \ \tilde \vartheta_{t, \eta}/\fr q^\eta (\tilde \pi) 
$$
are rational, i.e., $\vartheta^\circ_{b,\varepsilon} \in H^b_! (\SGK, \tcalm_{\lambda, E})$ and 
$\tilde \vartheta^\circ_{t,\eta} \in H^t_! (\SGK, \tcalm_{\tilde \lambda, E})$ for $E$ as before. 
Note that $E$ being large enough has the effect that 
the Hecke-summand $\pi_f^{K_f} \times \varepsilon$  splits off in inner cohomology, and similarly for 
$\tilde \pi^{K_f} \times \eta$.

\smallskip

\subsubsection{\bf Integral cohomology and refined periods}
\label{sec:int-refined-periods}
We now define an integral structure on these cohomology groups through a subsheaf 
$\calm_{\lambda, \calo} \subset \calm_{\lambda, E}\subset \calm_{\lambda, \C} = \calm_\lambda$. 
For a precise definition of these integral sheaves, see \cite[\S 2]{Ha}. These inclusions induce the following maps on inner cohomology: 
$$
H^b_!(\SGK, \tcalm_{\lambda, \calo}) \ \to \ 
H^b_! (\SGK, \tcalm_{\lambda, E}) \ \to \ 
H^b_! ( \SGK, \tcalm_{\lambda, \C}) \ = \ 
H^b_! (\SGK, \tcalm_{\lambda, E}) \otimes \C.
$$ 
The first map here is in general not injective and we define our integral cohomology
$$
\bar H^b_! (\SGK, \tcalm_{\lambda, \calo}) \ = \ \text{Image} \left(H^b_!(\SGK, \tcalm_{\lambda, \calo}) \to 
H^b_! (\SGK, \tcalm_{\lambda, E}) \right).
$$
Define $\bar H^t_!(\SGK, \tcalm_{\tilde \lambda, \calo})$ in a similar manner. These integral cohomology groups are 
$\calo$-lattices of full rank in the respective rational cohomology groups. They are stable under the action of 
$\HGK \times \pi_0(G(\R))$ for a suitably renormalized action of Hecke operators (see \S\ref{proof-thmb} below), and 
hence we can define isotypic components:  
$$
\bar H^b_!(\SGK, \tcalm_{\lambda, \calo}) (\pi_f^{K_f} \times \varepsilon)  \quad {\rm and} \quad 
\bar H^t_!(\SGK, \tcalm_{\tilde \lambda, \calo}) (\tilde \pi_f^{K_f} \times \eta).
$$
Thus, we have
$$
\bar H^b_!(\SGK, \tcalm_{\lambda, \calo}) (\pi_f^{K_f} \times \varepsilon) 
\ \subset \ 
H^b_! (\SGK, \tcalm_{\lambda, E})(\pi_f^{K_f} \times \varepsilon)  \ \ni \ \vartheta^\circ_{b, \varepsilon} (\pi), 
$$
and, similarly, 
$$
\bar H^t_!(\SGK, \tcalm_{\tilde \lambda, \calo}) (\tilde \pi_f^{K_f} \times \eta) 
\ \subset \ 
H^t_! (\SGK, \tcalm_{\tilde \lambda, E})(\tilde \pi_f^{K_f} \times \eta) \ \ni \ \tilde \vartheta^\circ_{t, \eta} (\tilde \pi).
$$

So far $K_f$ could have been any open compact-subgroup of $G(\A_f)$ which is deep enough so that the vector 
$W_f$ built out of essential vectors is $K_f$-fixed. However, we now choose $K_f$ optimally so that 
$\pi_f^{K_f}$ is a one-dimensional space; this is possible via the main results of \cite{j-ps-s} about conductors of representations. In particular, 
$H^b_! (\SGK, \tcalm_{\lambda, E})(\pi_f^{K_f} \times \varepsilon)$ is a one-dimensional $E$-vector space, and 
$H^b_!(\SGK, \tcalm_{\lambda, \calo}) (\pi_f^{K_f} \times \varepsilon)$ is a free rank-one $\calo$-submodule. 
We refine the definition of our periods $\fr p^\varepsilon (\pi)$ and $\fr q^\eta (\tilde \pi)$ such that
\begin{equation}
\label{eqn:refined-periods}
\begin{split}
\vartheta_{b, \varepsilon}/\fr p^\varepsilon (\pi) &
\ = \ \vartheta^\circ_{b,\varepsilon}  \ \in \ \bar H^b_!(\SGK, \tcalm_{\lambda, \calo}) (\pi_f \times \varepsilon), \\ 
& \\ 
\tilde \vartheta_{t, \eta}/\fr q^\eta (\tilde \pi) &
\ = \ \tilde \vartheta^\circ_{t,\eta} \ \in \   
\bar H^t_!(\SGK, \tcalm_{\tilde \lambda, \calo}) (\tilde \pi_f \times \eta), 
\end{split} 
\end{equation}
and that $\vartheta^\circ_{b,\varepsilon}$ and $\tilde \vartheta^\circ_{t,\eta}$ are generators of these rank one $\calo$-modules.

\medskip
\subsection{Cohomological pairing and the main theorem on adjoint $L$-values}

\subsubsection{\bf The main idea behind the rationality result}
\label{sec:main-idea}
Consider the Whittaker spaces of $\pi_f$ and $\tilde \pi_f$. Tacking on certain `cohomological vectors,'  we can go into the space of cusp forms $V_\pi$ and $V_{\tilde \pi}$ and then take the Petersson inner product. On the other hand,  
we can map into the bottom and top degree inner cohomology groups and consider the Poincar\'e duality pairing:
\begin{equation*}
\xymatrix@C=1pt{
V_\pi^{K_f} \cong \calw(\pi_f)^{K_f} \otimes \calw(\pi_\infty) & \bigotimes & V_{\tilde \pi}^{K_f} \cong 
\calw(\tilde \pi_f)^{K_f} \otimes \calw(\tilde \pi_\infty) \ar[rrrr] & & & & \C\\
\calw (\pi_f)^{K_f} \ar[d]^{\calf_{b,\varepsilon} (\pi_f)} \ar[u] & & \calw(\tilde \pi_f)^{K_f}  \ar[d]^{\calf_{t,\eta} (\tilde \pi_f)} \ar[u] &  \\
{ H_!^{b} (\SGK, \tcalm_{\lambda,\C})(\pi_f \times \varepsilon)} &\bigotimes & 
{ H_!^{t} (\SGK, \tcalm_{\tilde \lambda,\C})(\tilde \pi_f \times \eta)} \ar[rrrr] & \quad & & & \C.\\
}
\end{equation*}
Here the top row is the Petersson inner product and the bottom row is Poincar\'e duality pairing on cohomology. 
The vertical maps down to the cohomology groups are isomorphisms and they depend on the choices, 
$[\pi_\infty]^\varepsilon$ and $[\tilde \pi_\infty]^{\eta}.$ The Poincar\'e pairing can be computed after translating into relative Lie algebra cohomology to relate it to the pairing via Petersson inner product. On the other hand, wedge product at the level of differential forms is the same as cup product, i.e., the Poincar\'e pairing has nice rationality properties.

\smallskip
\subsubsection{\bf A cohomological pairing}

For any $\underline{\bf i} = (\underline{i_v}), \ \underline \alpha = (\alpha_v), \ 
\underline{\bf j} = (\underline{j_v}), \text{ and } \underline \beta = (\beta_v) $ we let $\varphi_{\underline{\bf i},\underline{\alpha}, \varepsilon}\in V_\pi$ and $\tilde \varphi_{\underline{\bf j}, \underline \beta, \eta} \in V_{\tilde \pi}$ be cuspidal automorphic forms corresponding to the Whittaker functions $W_f \bigotimes \otimes_v W_{\underline{i_v}, \alpha_v, \varepsilon_v, v}$ and $\tilde W_f \bigotimes \otimes_v \tilde W_{\underline{j_v}, \beta_v, \eta_v, v},$
 respectively.  Then
$$
\langle \vartheta_{b, \varepsilon}, \tilde \vartheta_{t, \eta} \rangle = \sum_{\underline{\bf i}, \underline{\bf j}} \sum_{\underline \alpha, \underline \beta} s(\underline{\bf i}, \underline{\bf j}) \langle m_{\alpha_v}, \tilde m_{\beta_v} \rangle \langle \varphi_{\underline{\bf i},\underline{\alpha}, \varepsilon},  \tilde \varphi_{\underline{\bf j}, \underline \beta, \eta} \rangle, 
$$
where $\langle m_{\alpha_v}, \tilde m_{\beta_v} \rangle$ comes from a canonical pairing 
$\calm_{\lambda,E} \times \calm_{\tilde \lambda,E} \to E$ which has been fixed once and for all, and 
$s(\underline{\bf i}, \underline{\bf j}) = \prod s(\underline{i_v}, \underline{j_v})$ where $s(\underline{i_v}, \underline{j_v})$ is defined by the formula
$$X^\sv_{i_1, v} \wedge \cdots \wedge X^\sv_{i_{b_v}, v} \wedge X^\sv_{j_1, v} \wedge \cdots \wedge X^\sv_{i_{t_v}, v} = s(\underline{i_v}, \underline{j_v}) (X^\sv_{1, v} \wedge \cdots \wedge X^\sv_{b_v + t_v, v}).$$ Using the relationship with the special value of the adjoint $L$-function from the previous section, we get
\begin{equation}
\label{eqn:l-value-pairing}
\langle \vartheta_{b, \varepsilon}, \tilde \vartheta_{t, \eta} \rangle = \frac{\fr c_\infty (\pi, \tilde \pi, \varepsilon, \eta) \cdot L(1, \Ad^0, \pi)}{\omega_F \cdot \fr p_\ram (\pi)}
\end{equation}
where 
$$\fr c_\infty (\pi, \tilde \pi, \varepsilon, \eta) = \prod_{v \in S_\infty}  \sum_{\underline{i_v}, \underline{j_v}} \sum_{\alpha_v, \beta_v} s(\underline{i_v}, \underline{j_v}) \cdot \langle m_{\alpha_v}, \tilde m_{\beta_v} \rangle \cdot \fr c^\#_v (W_{\underline{i_v}, \alpha_v, \varepsilon_v, v}, \tilde W_{\underline{j_v}, \beta_v, \eta_v, v} ).$$

By a suitably refined version of Poincare duality (see, for example, Harder~\cite[Thm.\,4.8.9]{Ha-AlGeo1}), we have 
\begin{equation}
\label{eqn:l-value-integral-pairing}
\frac{ \langle \vartheta_{b, \varepsilon}, \tilde \vartheta_{t, \eta} \rangle}{\fr p^\varepsilon (\pi) \cdot \fr q^{\eta} (\tilde \pi)} 
\ = \  
\langle \vartheta^\circ_{b,\varepsilon}, \tilde \vartheta_{t, \eta}^\circ \rangle \in \calo.
\end{equation}

\smallskip

\subsubsection{\bf Compatibility between the permissible signatures $\varepsilon$ and $\eta$}\label{compatible-signatures}

\begin{proposition}
If $\eta \not = (-1)^{n-1} \varepsilon$, then $\langle \vartheta_{b, \varepsilon}, \tilde \vartheta_{t, \eta} \rangle = 0.$
\end{proposition}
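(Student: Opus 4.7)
The plan is to exploit the equivariance of the Poincar\'e duality pairing under the action of the component group $\pi_0(K_\infty) = K_\infty/K_\infty^\circ$ on $H^{\sbt}_!(\SGK, \tcalm_{\sbt,\C})$. By the K\"unneth decomposition, it suffices to analyze the action one archimedean place at a time, and since complex places contribute trivial component groups, only the real places $v \in S_r$ matter. At each such $v$ pick a representative $\delta_v \in K_v$ of the nontrivial element of $K_v/K_v^\circ$; by hypothesis we have
$$
\delta_v \cdot \vartheta_{b,\varepsilon} = \varepsilon_v\, \vartheta_{b,\varepsilon}, \qquad
\delta_v \cdot \tilde\vartheta_{t,\eta} = \eta_v\, \tilde\vartheta_{t,\eta}.
$$

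The key point is the transformation law of the Poincar\'e pairing under $\delta_v$. Write it through relative Lie algebra cohomology: the cup product lands in $H^{b+t}_!(\SGK, \tcalm_{\lambda,\C} \otimes \tcalm_{\tilde\lambda,\C})$, and the trace-then-integrate map that produces a scalar is built from the canonical pairing $\calm_\lambda \times \calm_{\tilde\lambda} \to \C$, which is $K_\infty$-equivariant, together with the top exterior power $\wedge^{b+t}(\fr g/\fr k)^\sv$ on which $\pi_0(K_\infty)$ acts by the orientation character $\mathrm{sgn}_\infty$. Consequently,
$$
\langle \delta_v \cdot \vartheta_{b,\varepsilon},\, \delta_v \cdot \tilde\vartheta_{t,\eta}\rangle
\ = \ \mathrm{sgn}_v(\delta_v)\cdot \langle \vartheta_{b,\varepsilon}, \tilde\vartheta_{t,\eta}\rangle,
$$
so combining with the isotypic identities above forces
$$
\varepsilon_v \eta_v \ = \ \mathrm{sgn}_v(\delta_v)
$$
whenever the pairing is nonzero.

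The heart of the computation is thus to identify $\mathrm{sgn}_v(\delta_v)$. At a real place one may take $\delta_v = \mathrm{diag}(-1,1,\dots,1) \in \Orth(n)$. Via the Cartan decomposition $\fr g_v = \fr k_v \oplus \fr p_v$ with $\fr p_v$ the space of symmetric $n\times n$ real matrices, $\delta_v$ acts on $\fr p_v$ by conjugation; the fixed coordinates are the entries $x_{ij}$ with $\{i,j\} \subset \{2,\dots,n\}$ or $\{i,j\} = \{1,1\}$, while the $n-1$ entries $x_{1j}$ with $j>1$ get a sign. Hence $\delta_v$ acts on $\wedge^{\dim \fr p_v}\fr p_v^\sv$ by $(-1)^{n-1}$, so $\mathrm{sgn}_v(\delta_v) = (-1)^{n-1}$.

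Putting this together, vanishing can only fail when $\varepsilon_v \eta_v = (-1)^{n-1}$ for every $v \in S_r$, i.e.\ when $\eta = (-1)^{n-1}\varepsilon$. Otherwise $\langle \vartheta_{b,\varepsilon}, \tilde\vartheta_{t,\eta}\rangle = 0$. The only step that takes any genuine care is pinning down the character by which $\pi_0(K_\infty)$ acts on the top exterior power $\wedge^{b+t}(\fr g/\fr k)^\sv$, since one must confirm that the relative Lie algebra cochain model of the cup-product/trace pairing is compatible with this action; once that is in hand the vanishing is immediate from the two displayed identities.
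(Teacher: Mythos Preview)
Your argument is correct and follows essentially the same route as the paper: reduce via K\"unneth to a single real place, use the element $\delta_v = \mathrm{diag}(-1,1,\dots,1)$, observe that conjugation by $\delta_v$ negates exactly the $n-1$ basis vectors of $\fr g_v/\fr k_v$ involving the first index, and conclude that the pairing transforms by $(-1)^{n-1}$. One small inaccuracy: since $K_\infty$ contains the split center $S(\R)$, the quotient $\fr g_v/\fr k_v$ is the space of \emph{traceless} symmetric matrices rather than all symmetric matrices, but this does not change the sign count since the diagonal directions are fixed by $\delta_v$ either way.
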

\begin{proof}
At the level of relative Lie algebra cohomology at infinity the $\varepsilon$-isotypic component can pair non-trivially with $\eta$-isotypic component only if $\eta = (-1)^{n-1} \varepsilon$. The proposition follows immediately from this claim.

Since the relevant spaces at infinity are tensor products over the infinite places of the number field $F$, we fix without loss of generality a real place $v$ of $F$. The pairing we are interested in is given by
 \begin{equation*}
 \xymatrix@C=1pt{
 H^{b_v} (\fr g_v, K^\circ_v; \calw(\pi_v) \otimes M_{\lambda_v}) \ar@{=}[d] & \otimes & H^{t_v} (\fr g_v, K^\circ_v; \calw(\tilde \pi_v) \otimes M_{\tilde \lambda_v}) \ar[rrrr] \ar@{=}[d] & & & & \C \\
  \left[ \wedge^{b_v} (\fr g_v/\fr k_v)^\sv \otimes \calw (\pi_v) \otimes M_{\lambda_v} \right]^{K_v^\circ} & \otimes &  \left[ \wedge^{t_v} (\fr g_v/\fr k_v)^\sv \otimes \calw (\tilde \pi_v) \otimes M_{\tilde \lambda_v} \right]^{K_v^\circ} 
  \ar[rrrr] & & & & \C.\\
 }
 \end{equation*}
 Let $\delta_n = \delta_{n,v} = \text{Diag}(-1, 1, \dots, 1) \in K_v/K_v^\circ$.  For each $1 \le i < j \le n$, let $X_{ij}$ be the matrix with $1$ in the $ij$-th and $ji$-th entry and $0$ elsewhere. For each $1\le r \le n-1$, let $X_r$ be the matrix with $1$ in the $rr$-th entry, $-1$ in the $(r+1)(r+1)$-th entry, and $0$ elsewhere. This collection of vectors form a basis for $\fr g_v/\fr k_v$. The action of $K_v/K_v^\circ$ on $\fr g_v/\fr k_v$ is by conjugation and we see easily that 
 $\delta_n \cdot X_r = X_r$ for all $r$, 
 $\delta_n \cdot X_{ij} = X_{ij}$ for all $1 < i<j \le n,$ 
 and $\delta_n \cdot X_{1j} = -X_{1j}$ for all $1 <j \le n$. Using this one can verify that 
 $$\langle \delta_n \cdot a, \delta_n \cdot b \rangle = (-1)^{n-1} \langle a, b \rangle.$$
 By taking $a$ in the $\varepsilon$-isotypic component and $b$ in $\eta$-isotypic component we see that 
 $$
 \varepsilon(\delta_n)\eta(\delta_n) \langle a, b\rangle = \langle \delta_n \cdot a, \delta_n \cdot b \rangle = (-1)^{n-1} \langle a, b \rangle.
 $$ 
 Hence, our claim at the beginning of the proof is true.
\end{proof}

The above proposition motivates the notation: 
$$
\tilde \varepsilon = (-1)^{n-1}\varepsilon.
$$
Observe that $\varepsilon$ is permissible for $\pi$ if and only if $\tilde \varepsilon$ is permissible for 
$\tilde \pi.$

\smallskip
\subsubsection{\bf A nonvanishing hypothesis at infinity}\label{nv-hyp}

The following proposition will be proved in \S \ref{nv-sec} using the methods initiated by Sun~\cite{Sun}. 

\begin{proposition}
\label{prop:nv-hyp}
$\fr c_\infty (\pi, \tilde \pi, \varepsilon, \tilde \varepsilon) \not = 0$. 
\end{proposition}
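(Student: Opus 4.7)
The plan is to reduce to an archimedean local statement and then adapt the methods of Sun~\cite{Sun}. Since $\fr c_\infty (\pi, \tilde \pi, \varepsilon, \tilde \varepsilon)$ factors as a product over $v \in S_\infty$ of local quantities
$$
\fr c_\infty^v \ := \ \sum_{\underline{i_v}, \underline{j_v}} \sum_{\alpha_v, \beta_v} s(\underline{i_v}, \underline{j_v}) \cdot \langle m_{\alpha_v}, \tilde m_{\beta_v} \rangle \cdot \fr c^\#_v (W_{\underline{i_v}, \alpha_v, \varepsilon_v, v}, \tilde W_{\underline{j_v}, \beta_v, \tilde\varepsilon_v, v}),
$$
it suffices to prove that $\fr c_\infty^v \neq 0$ for each archimedean place $v$ separately.

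First, I would recognize $\fr c_\infty^v$ as a cup-product pairing in relative Lie algebra cohomology. By the $\GL_n(F_v)$-equivariance of $\varTheta_v$ noted in \S2.1 and the explicit definition of the classes $[\pi_v]^{\varepsilon_v}$ and $[\tilde \pi_v]^{\tilde\varepsilon_v}$, the sum above equals, up to the nonzero normalising factor $L_v(1, \pi_v \times \tilde\pi_v)^{-1}$, the image of $[\pi_v]^{\varepsilon_v} \otimes [\tilde \pi_v]^{\tilde\varepsilon_v}$ under the composition
\begin{equation*}
\begin{split}
H^{b_v}(\fr g_v, K_v^\circ; \calw(\pi_v) \otimes \calm_{\lambda_v,\C})(\varepsilon_v) \ \otimes \
& H^{t_v}(\fr g_v, K_v^\circ; \calw(\tilde \pi_v) \otimes \calm_{\tilde\lambda_v,\C})(\tilde\varepsilon_v) \\
\longrightarrow \ &
H^{d_v}\!\big(\fr g_v, K_v^\circ; \calw(\pi_v) \otimes \calw(\tilde\pi_v) \otimes \calm_{\lambda_v,\C} \otimes \calm_{\tilde\lambda_v,\C}\big) \ \longrightarrow \ \C,
\end{split}
\end{equation*}
where $d_v = b_v + t_v = \dim(\fr g_v/\fr k_v)$, the first arrow is cup product, and the last arrow is induced by $\varTheta_v$ together with the canonical pairing $\calm_{\lambda_v,\C} \otimes \calm_{\tilde\lambda_v,\C} \to \C$. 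The exterior-algebra signs $s(\underline{i_v}, \underline{j_v})$ in the definition of $\fr c_\infty^v$ are exactly those coming from the cup product of wedge powers.

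Second, I would argue that this pairing is nondegenerate. By Clozel's description of cohomological $\pi_\infty$ recalled in \S3.1, the two $(\varepsilon_v$-, resp., $\tilde\varepsilon_v$-)isotypic subspaces are one-dimensional, and Proposition~\ref{compatible-signatures} shows the pairing can only be nonzero when the signatures satisfy $\tilde\varepsilon_v = (-1)^{n-1}\varepsilon_v$, which is precisely our hypothesis. Hence the pairing reduces to a bilinear form between two one-dimensional spaces, and it is nonzero provided the underlying pairing of Whittaker models, namely $\varTheta_v$, is nondegenerate on the minimal $K_v^\circ$-types contributing to the cohomology classes. Since $\pi_v$ is essentially unitary and generic, $\varTheta_v$ is (a nonzero multiple of) the canonical invariant pairing between $\calw(\pi_v)$ and $\calw(\tilde\pi_v)$, hence nondegenerate.

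The main obstacle will be to verify the last assertion cleanly: one must ensure that the cohomological test vectors built from the Vogan--Zuckerman model of $\pi_v$ are \emph{not} orthogonal to their duals under $\varTheta_v$. This is precisely where Sun's strategy from \cite{Sun} applies: reduce the nondegeneracy to a finite-dimensional statement about the $K_v^\circ$-type decomposition of $\pi_v$, and then exhibit an explicit element of the minimal $K_v^\circ$-type on which the pairing is nonzero. I would adapt his argument for $\GL_n \times \GL_{n-1}$ to our $\GL_n \times \GL_n$ setting by using the fact that $\varTheta_v$ defines a nonzero element of $\Hom_{\GL_{n-1}(F_v)}(\calw(\pi_v) \otimes \calw(\tilde\pi_v), \C)$, invoking the multiplicity-one theorem of Sun--Zhu; then the cohomological cup-product nonvanishing follows from Sun's Vogan--Zuckerman type computation applied to the extreme degrees $b_v$ and $t_v$.
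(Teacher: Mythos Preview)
Your reformulation is correct and matches the paper's setup: the factorization over archimedean places, and the identification of $\fr c_\infty^v$ with the image of $[\pi_v]^{\varepsilon_v} \otimes [\tilde\pi_v]^{\tilde\varepsilon_v}$ under the cup-product map followed by $\phi_M \otimes \phi_\pi$ (in the paper's notation of \S\ref{nv-sec}) is exactly how the problem is framed there. But the proof you sketch has a genuine gap at the step you yourself flag as ``the main obstacle.''

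The issue is that nondegeneracy of $\varTheta_v$ on the full Whittaker space does \emph{not} imply that the particular linear combination
$$
\sum_{\underline{i_v},\underline{j_v},\alpha_v,\beta_v} s(\underline{i_v},\underline{j_v})\,\langle m_{\alpha_v},\tilde m_{\beta_v}\rangle\, \varTheta_v(W_{\underline{i_v},\alpha_v,\varepsilon_v,v}, \tilde W_{\underline{j_v},\beta_v,\tilde\varepsilon_v,v})
$$
is nonzero. This is a highly specific sum with signs and coefficient-space pairings, and there is no soft argument ruling out cancellation. Saying ``adapt Sun's argument'' is precisely the content of the proposition, not a proof of it. (Also, a technical slip: $\varTheta_v$ is $\GL_n(F_v)$-invariant, not merely $\GL_{n-1}(F_v)$-invariant; the paper uses this together with a Schur-type lemma to pin down $\Hom_G(\pi_\xi,\id)$ up to scalar, rather than the Sun--Zhu $\GL_n \times \GL_{n-1}$ multiplicity-one theorem.)

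What the paper actually does in \S\ref{nv-sec} is substantially more: it identifies explicit $\SO(n)$-types $\tau_\mu \subset M_\mu$, $\tau_n \subset \wedge^b(\fr g/\fr k)$, and $\tau_\mu^+ = \tau_\mu \cart \tau_n \subset W_\mu^\infty$, each occurring with multiplicity one; shows that the diagonal $\wedge^{b+t}(\fr g/\fr k) \to \wedge^b(\fr g/\fr k) \otimes \wedge^t(\fr g/\fr k)$ followed by projection hits the $C^\circ$-invariant line in $\tau_n \otimes \widetilde\tau_n$; and then uses a PRV-component argument to see that no $C^\circ$-invariant functional on $\widetilde\tau_\xi \otimes \tau_\xi^+$ kills $\tau_n \otimes \widetilde\tau_n$. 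For $n$ even there is a further signature computation. None of this is recoverable from the nondegeneracy of $\varTheta_v$ alone; you need to supply these $K$-type identifications and the PRV step to close the argument.
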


We now define a {\it period at infinity} as:  
\begin{equation}
\label{eqn:period-infty}
\fr p_\infty (\pi) \ := \  \left(\fr c_\infty (\pi, \tilde \pi, \varepsilon, \tilde \varepsilon)\right)^{-1}.
\end{equation}
We remark here that there is only one permissible character $\varepsilon$ when $n$ is odd, and that this quantity does not depend on $\varepsilon$ when $n$ is even (see the last paragraph of \S\ref{nv-sec}), 
justifying our notation $\fr p_\infty (\pi)$ not involving $\varepsilon.$ 
We mention the following result {\it en passant}. 

\begin{proposition}
Suppose $F$ is totally real or a totally imaginary quadratic extension of a totally real field, then
$$
\fr p_\infty ({}^\sigma\!\pi) = \fr p_\infty (\pi).
$$
\end{proposition}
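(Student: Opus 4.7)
The plan is to exploit the fact that $\fr p_\infty(\pi)$ depends only on the archimedean components of $\pi$ and factors as a product of purely local quantities indexed by $v \in S_\infty$. Writing the $v$-summand of $\fr c_\infty$ as
$$
\fr c^{\rm loc}_v(\pi_v, \tilde \pi_v, \varepsilon_v, \tilde \varepsilon_v)
\ = \ \sum_{\underline{i_v}, \underline{j_v}}\sum_{\alpha_v, \beta_v}
s(\underline{i_v}, \underline{j_v}) \,\langle m_{\alpha_v}, \tilde m_{\beta_v}\rangle\,
\fr c^\#_v(W_{\underline{i_v}, \alpha_v, \varepsilon_v, v}, \tilde W_{\underline{j_v}, \beta_v, \tilde\varepsilon_v, v}),
$$
one has $\fr p_\infty(\pi)^{-1} = \prod_{v \in S_\infty}\fr c^{\rm loc}_v(\pi_v, \tilde \pi_v, \varepsilon_v, \tilde \varepsilon_v)$. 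The strategy is therefore to produce a permutation $\sigma_\infty$ of $S_\infty$ for which the local factor of ${}^\sigma\pi$ at $v$ agrees with the local factor of $\pi$ at $\sigma_\infty^{-1}(v)$; re-indexing the product then finishes the argument.

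The hypothesis on $F$ is exactly what allows $\sigma \in \Aut(\C)$ to descend to such a permutation $\sigma_\infty$ of $S_\infty$. For $F$ totally real, the Galois closure $\tilde F \subset \C$ is totally real, so for every real embedding $\iota \colon F \hookrightarrow \R$ the composition $\sigma \circ \iota$ is again a real embedding, and $\sigma$ simply permutes the real embeddings. For $F$ a totally imaginary quadratic extension of a totally real field, $\tilde F$ is CM and complex conjugation is central in $\Gal(\tilde F/\Q)$; hence $\sigma|_F$ commutes with the complex conjugation that defines the equivalence $\iota \sim \bar\iota$ on embeddings, and again $\sigma$ induces a permutation of $S_\infty$. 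In either case the local fields $F_v$ and $F_{\sigma_\infty^{-1}(v)}$ are canonically identified as topological fields (both $\R$ in the totally real case, both $\C$ in the CM case), which identifies in turn the local groups $\GL_n(F_v)$, their maximal compact subgroups, and the bases of $\fr g_v/\fr k_v$ entering the construction of $\fr c^{\rm loc}_v$.

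By Clozel (\cite[Thm.\,3.13]{clozel}), the highest weight of ${}^\sigma\pi$ at $v$ is $\lambda_{\sigma_\infty^{-1}(v)}$, so under the identification above $({}^\sigma\pi)_v$ matches $\pi_{\sigma_\infty^{-1}(v)}$; when $n$ is odd the distinguished signature $\varepsilon_{\pi_\infty}$ is transported the same way, and when $n$ is even $\fr p_\infty$ is independent of $\varepsilon$ as remarked after (\ref{eqn:period-infty}). Therefore the local factor $\fr c^{\rm loc}_v$ for ${}^\sigma\pi$ equals $\fr c^{\rm loc}_{\sigma_\infty^{-1}(v)}$ for $\pi$, and re-indexing over $S_\infty$ yields $\fr c_\infty({}^\sigma\pi, {}^\sigma\tilde\pi, \varepsilon, \tilde\varepsilon) = \fr c_\infty(\pi, \tilde\pi, \varepsilon, \tilde\varepsilon)$, hence the asserted equality of periods.

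The main obstacle is the bookkeeping needed to make the matching of local factors tautological: one must check that every choice entering $\fr c^{\rm loc}_v$ (the basis of $\fr g_v/\fr k_v$, the essentially unique cohomology generators $[\pi_v]^{\varepsilon_v}$ and $[\tilde\pi_v]^{\tilde\varepsilon_v}$ together with the Whittaker vectors $W_{\underline{i_v}, \alpha_v, \varepsilon_v, v}$ extracted from them, the pairing on $\calm_{\lambda_v, E}$, and the Haar measure on $N_{n-1}(F_v)\backslash\GL_{n-1}(F_v)$ entering $\varTheta_v$) can be transported canonically across the identification $F_v \cong F_{\sigma_\infty^{-1}(v)}$. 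This is also where the hypothesis on $F$ is essential: for a general number field $\sigma$ can interchange real and complex archimedean places, and no canonical identification between the corresponding archimedean groups exists.
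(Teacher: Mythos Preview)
Your argument is correct and is essentially the same as the paper's: the paper's proof simply invokes the K\"unneth theorem together with the fact (citing Gan--Raghuram \cite[Prop.\,3.2]{gan-raghuram}) that for $F$ totally real or CM the action of $\sigma$ on $\pi_\infty$ is by permuting the archimedean local components, which is precisely the mechanism you spell out in detail. Your explicit bookkeeping of the local factors $\fr c^{\rm loc}_v$ and the transport of choices across the identification $F_v \cong F_{\sigma_\infty^{-1}(v)}$ is exactly what the one-line proof in the paper is abbreviating.
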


\begin{proof}
This follows from the K\"unneth theorem for relative Lie-algebra cohomology applied after 
the fact that when the base field $F$ is as in the statement, then the action of $\sigma$ on 
$\pi_\infty$ is basically via permutations; see Gan--Raghuram \cite[Prop.\,3.2]{gan-raghuram}. 
\end{proof}

\smallskip
\subsubsection{\bf The main result on adjoint $L$-values}
\label{proof-thma}

\begin{theorem}
\label{thm:l-value}
Let $\pi \in {\rm Coh}(G, \lambda, K_f)$ and let $\varepsilon$ be a permissible signature for $\pi.$ Define:
\begin{equation}\label{L-alg}
\Lalg (1, \Ad^0, \pi, \varepsilon) \ := \ 
\frac{L(1, \Ad^0, \pi)}{\omega_F \cdot \fr p_\ram (\pi) \cdot \fr p_\infty (\pi) \cdot \fr 
p^\varepsilon (\pi) \cdot \fr q^{\tilde \varepsilon} (\tilde \pi)} .
\end{equation}
For all $\sigma \in \Aut(\C)$ we have
$$
\sigma( \Lalg (1, \Ad^0, \pi, \varepsilon)) \ = \  \Lalg (1, \Ad^0, {^\sigma} \pi, \varepsilon).
$$

In particular, $\Lalg (1, \Ad^0, \pi, \varepsilon)  \in \Q(\pi).$ Furthermore, keeping in mind the refined periods, we
have $\Lalg (1, \Ad^0, \pi, \varepsilon)  \in \calo.$
\end{theorem}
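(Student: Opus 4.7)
The plan is to express $\Lalg(1,\Ad^0,\pi,\varepsilon)$ as a single Poincar\'e duality pairing of rational integral cohomology classes, and then exploit the Galois-equivariance and integrality of that pairing. First, I would combine the two identities (\ref{eqn:l-value-pairing}) and (\ref{eqn:l-value-integral-pairing}), specialized to $\eta=\tilde\varepsilon$ (permitted by Prop.~\ref{prop:nv-hyp}, which ensures $\fr p_\infty(\pi)$ is well-defined and nonzero), together with the definition (\ref{eqn:period-infty}) of $\fr p_\infty(\pi)$, to obtain the master identity
\begin{equation*}
\Lalg(1,\Ad^0,\pi,\varepsilon) \ = \ \bigl\langle \vartheta^\circ_{b,\varepsilon},\ \tilde\vartheta^\circ_{t,\tilde\varepsilon}\bigr\rangle.
\end{equation*}
The right-hand side is a purely cohomological quantity: a Poincar\'e duality pairing between inner cohomology classes constructed in \S\ref{sec:betti-whittaker}. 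All archimedean subtleties, including the delicate period $\fr p_\infty(\pi)$, are absorbed into the single scalar on the left.

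Next I would establish the Galois-equivariance $\sigma(\Lalg(1,\Ad^0,\pi,\varepsilon))=\Lalg(1,\Ad^0,{}^\sigma\!\pi,\varepsilon)$ by showing that the Poincar\'e pairing commutes with $\sigma\in\Aut(\C)$ and that the classes transform correctly. Three ingredients are needed: (i) the local essential vectors satisfy ${}^\sigma W_{\pi_v}=W_{{}^\sigma\pi_v}$ at every finite place, so that $\sigma(W_f)$ is the essential Whittaker vector for ${}^\sigma\!\pi_f$; (ii) the commutative diagram of \S\ref{sec:betti-whittaker} defining $\fr p^\varepsilon(\pi)$ and, analogously, $\fr q^{\tilde\varepsilon}(\tilde\pi)$ tells us precisely that the normalized maps $\calf^\circ_{b,\varepsilon}$ and $\calf^\circ_{t,\tilde\varepsilon}$ respect rational structures, so that $\sigma(\vartheta^\circ_{b,\varepsilon}(\pi))=\vartheta^\circ_{b,\varepsilon}({}^\sigma\!\pi)$ and $\sigma(\tilde\vartheta^\circ_{t,\tilde\varepsilon}(\tilde\pi))=\tilde\vartheta^\circ_{t,\tilde\varepsilon}(\widetilde{{}^\sigma\!\pi})$; and (iii) the Poincar\'e duality pairing on $H^b_!(\SGK,\tcalm_{\lambda,E})\otimes H^t_!(\SGK,\tcalm_{\tilde\lambda,E})\to E$ is defined over $E$ and is therefore $\Aut(\C/\Q)$-equivariant on the $\C$-linear extension. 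Putting (i)--(iii) together in the master identity yields the displayed $\sigma$-equivariance, whence $\Lalg(1,\Ad^0,\pi,\varepsilon)\in\Q(\pi)$ by standard Galois descent.

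Finally, to upgrade to the integrality statement $\Lalg(1,\Ad^0,\pi,\varepsilon)\in\calo$, I would invoke the refinement of the periods given in (\ref{eqn:refined-periods}): after choosing $K_f$ such that $\pi_f^{K_f}$ is one-dimensional, the classes $\vartheta^\circ_{b,\varepsilon}$ and $\tilde\vartheta^\circ_{t,\tilde\varepsilon}$ are $\calo$-generators of the rank-one modules $\bar H^b_!(\SGK,\tcalm_{\lambda,\calo})(\pi_f\times\varepsilon)$ and $\bar H^t_!(\SGK,\tcalm_{\tilde\lambda,\calo})(\tilde\pi_f\times\tilde\varepsilon)$ respectively. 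Harder's refined Poincar\'e duality (cited just before (\ref{eqn:l-value-integral-pairing})) provides an $\calo$-valued pairing on the integral cohomology, and hence the master identity forces $\Lalg(1,\Ad^0,\pi,\varepsilon)\in\calo$. The main obstacle I foresee lies precisely in step (ii) of the Galois-equivariance argument: keeping careful track of the various rational structures (Whittaker model vs.\ cohomological model, top vs.\ bottom degree, and the $\varepsilon$ vs.\ $\tilde\varepsilon$ isotypic components) so that the non-trivial mismatch pointed out in the parenthetical remark in \S\ref{sec:betti-whittaker} between the $\sigma$-action on $\varepsilon$-isotypic components in cohomology and on the Whittaker side is properly handled, and so that the definitions of the periods are consistent across $\pi$ and ${}^\sigma\!\pi$.
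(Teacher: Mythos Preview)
Your proposal is correct and follows essentially the same approach as the paper's own proof: establish the master identity $\Lalg(1,\Ad^0,\pi,\varepsilon)=\langle\vartheta^\circ_{b,\varepsilon},\tilde\vartheta^\circ_{t,\tilde\varepsilon}\rangle$ from (\ref{eqn:l-value-pairing}), (\ref{eqn:l-value-integral-pairing}), (\ref{eqn:period-infty}) and (\ref{eqn:refined-periods}), then deduce $\sigma$-equivariance from the behaviour of the normalized classes under $\sigma$ together with the $\sigma$-equivariance of Poincar\'e duality, and integrality from the refined periods and the $\calo$-valued pairing. The paper records exactly these steps, and your anticipated obstacle in step~(ii) is indeed handled by the commutative diagram defining the periods (with the parenthetical correction noted there), so no further work is needed.
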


\begin{proof}
We know from (\ref{eqn:refined-periods}), (\ref{eqn:l-value-pairing}), (\ref{eqn:l-value-integral-pairing}), and 
(\ref{eqn:period-infty}) that 
\begin{equation}
\label{eqn:L-alg-value-pairing}
\Lalg (1, \Ad^0, \pi, \varepsilon) \ = \ 
\langle \vartheta^\circ_{b,\varepsilon}, \tilde \vartheta_{t, \tilde \varepsilon}^\circ \rangle \in \calo. 
\end{equation}
By definition of the periods, we also know that 
$$
{^\sigma} \vartheta_{b, \varepsilon}^\circ (\pi) = \vartheta_{b, \varepsilon}^\circ ({^\sigma} \pi) 
\quad \mathrm{and} \quad 
{^\sigma} \tilde \vartheta_{t,  \eta}^\circ (\tilde \pi) = \tilde \vartheta_{t, \eta}^\circ ({^\sigma} \tilde \pi).
$$
Applying $\sigma$ to (\ref{L-alg}) above gives: 
\begin{equation*}
\begin{split}
\sigma (\Lalg (1, \Ad^0, \pi, \varepsilon))  
& \ = \ \sigma(\langle \vartheta^\circ_{b,\varepsilon} (\pi) , \tilde \vartheta_{t, \tilde \varepsilon}^\circ (\tilde{\pi}) \rangle) \\
& \ = \ \langle {^\sigma} \vartheta^\circ_{b,\varepsilon} (\pi) , {^\sigma}\tilde \vartheta_{t, \tilde \varepsilon}^\circ (\tilde{\pi}) \rangle \\
& \ = \ \langle \vartheta_{b, \varepsilon}^\circ ({^\sigma} \pi) , 
\tilde \vartheta_{t,  \tilde \varepsilon}^\circ ({^\sigma} \tilde{\pi}) \rangle \\
& \ = \ \Lalg (1, \Ad^0, {^\sigma}\!\pi, \varepsilon).
\end{split}
\end{equation*}
Here the second equality follows from the $\sigma$-equivariance of Poincar\'e duality.
\end{proof}

The following piquant period relation, which is an easy corollary of the above theorem, 
is a generalization of a result of Shimura for Hilbert modular forms 
\cite[Thm.\,4.3, II]{shimura-duke} that says that the product 
$c^\varepsilon({\bf f}) c^{-\varepsilon}({\bf f})$ of a period and its `complementary' period attached to a holomorphic cuspidal Hilbert modular form ${\bf f}$ is essentially the Petersson norm of ${\bf f}$. Such a period relation was hinted at in the last paragraph of the article \cite{raghuram-legacy}.

\begin{cor}
Suppose the base field $F$ has at least one real place, and suppose $n$ is even.  
Let $\pi \in {\rm Coh}(G, \lambda, K_f).$ For any two characters $\varepsilon_1$ and $\varepsilon_2$ we have:
$$
\fr p^{\varepsilon_1} (\pi) \cdot \fr q^{-\varepsilon_1} (\tilde \pi) 
\ \approx \ 
\fr p^{\varepsilon_2} (\pi) \cdot \fr q^{-\varepsilon_2} (\tilde \pi), 
$$
where by $\approx$ we mean up to an element of the number field $\Q(\pi).$ Moreover, the ratio of the left hand side by the right hand side is $\Aut(\C)$-equivariant. 

\end{cor}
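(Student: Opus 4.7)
The plan is to obtain the corollary as a formal consequence of Theorem \ref{thm:l-value} by applying the algebraicity statement twice, for $\varepsilon = \varepsilon_1$ and $\varepsilon = \varepsilon_2$, and then taking the ratio of the resulting identities. Since $n$ is even, we have $\tilde\varepsilon = (-1)^{n-1}\varepsilon = -\varepsilon$, and moreover every character $\varepsilon$ of $\pi_0(K_\infty)$ is permissible for $\pi$ (by the discussion at the start of \S\ref{sec:whit-coh}), so both sides of the asserted relation make sense. The hypothesis that $F$ has at least one real place guarantees that the group $\pi_0(K_\infty)$ is nontrivial, so the statement is nonvacuous.

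Rewriting the definition (\ref{L-alg}) of $\Lalg$, we get
$$
\fr p^{\varepsilon}(\pi)\cdot \fr q^{-\varepsilon}(\tilde\pi)
\ = \
\frac{L(1, \Ad^0, \pi)}{\omega_F \cdot \fr p_{\ram}(\pi)\cdot \fr p_\infty(\pi) \cdot \Lalg(1, \Ad^0, \pi, \varepsilon)}.
$$
All factors on the right-hand side other than $\Lalg(1, \Ad^0, \pi, \varepsilon)$ are independent of the choice of the permissible signature $\varepsilon$. Hence, dividing the identity for $\varepsilon_1$ by the one for $\varepsilon_2$ yields
$$
\frac{\fr p^{\varepsilon_1}(\pi)\cdot \fr q^{-\varepsilon_1}(\tilde\pi)}{\fr p^{\varepsilon_2}(\pi)\cdot \fr q^{-\varepsilon_2}(\tilde\pi)}
\ = \
\frac{\Lalg(1, \Ad^0, \pi, \varepsilon_2)}{\Lalg(1, \Ad^0, \pi, \varepsilon_1)},
$$
which lies in $\Q(\pi)$ by Theorem \ref{thm:l-value}, provided the right-hand side is well-defined.

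For the latter, we need $\Lalg(1, \Ad^0, \pi, \varepsilon_1) \neq 0$; equivalently, the denominator quantities must be finite and nonzero. The nonvanishing of $L(1, \Ad^0, \pi)$ is precisely the content of (\ref{L-residue}), coming from the simple pole of $L(s, \pi\times\tilde\pi)$ at $s=1$. Nonvanishing of $\fr p_{\ram}(\pi)$ is ensured by the discussion in \S\ref{sec:ramified}, nonvanishing of $\fr p_\infty(\pi)$ is Proposition \ref{prop:nv-hyp}, and the Betti--Whittaker periods $\fr p^{\varepsilon_1}(\pi)$ and $\fr q^{-\varepsilon_1}(\tilde\pi)$ are nonzero by construction. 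Finally, applying an arbitrary $\sigma \in \Aut(\C)$ to the displayed ratio and invoking the Galois equivariance $\sigma(\Lalg(1, \Ad^0, \pi, \varepsilon)) = \Lalg(1, \Ad^0, {}^\sigma\pi, \varepsilon)$ from Theorem \ref{thm:l-value} shows at once that the ratio for $\pi$ maps to the analogous ratio for ${}^\sigma\pi$, proving the claimed $\Aut(\C)$-equivariance.

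The main obstacle, in fact the only nontrivial input, is the nonvanishing Proposition \ref{prop:nv-hyp} needed to even define $\fr p_\infty(\pi)$; once that is in hand the corollary is essentially a bookkeeping exercise, since Theorem \ref{thm:l-value} isolates $\Lalg(1, \Ad^0, \pi, \varepsilon)$ as the only factor depending on the signature.
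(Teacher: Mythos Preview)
Your proof is correct and is exactly the argument the paper has in mind: the corollary is stated immediately after Theorem \ref{thm:l-value} as an ``easy corollary,'' and your derivation---rewriting (\ref{L-alg}) to isolate $\fr p^\varepsilon(\pi)\cdot\fr q^{-\varepsilon}(\tilde\pi)$, noting that $\omega_F$, $\fr p_{\ram}(\pi)$, and $\fr p_\infty(\pi)$ are independent of $\varepsilon$, and dividing---is precisely that easy step. The nonvanishing of $\Lalg$ that you check is likewise immediate from the nonvanishing of each constituent.
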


\medskip

\subsection{On the criticality of the adjoint $L$-function at $s=1$}
Following Deligne \cite{deligne}, we will say $s=s_0$ is critical if and only if $L_\infty (s, \Ad^0, \pi)$ and 
$L_\infty (1-s, (\Ad^0)^\sv, \pi)$ are regular (i.e., no poles) at $s=s_0$. Since $(\Ad^0)^\sv = \Ad^0$, we see that $s=1$ is critical if and only if both $L_\infty (1, \Ad^0, \pi)$ and $L_\infty (0, \Ad^0, \pi)$ are regular.

\begin{proposition}
\label{prop:critical}
The point $s=1$ is critical for $L(s, \Ad^0, \pi)$ if and only if $n=2$ and $F$ is totally real. 
\end{proposition}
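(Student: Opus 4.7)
The plan is to reduce the question to a purely local archimedean computation. Using the factorization
\[
L_\infty(s, \Ad^0, \pi) \ = \ \prod_{v \mid \infty} L_v(s, \Ad^0, \pi_v),
\]
and the self-duality $(\Ad^0)^\sv \cong \Ad^0$ (the representation of ${}^L G$ on traceless matrices is self-dual via the trace form), the criticality at $s_0 = 1$ is equivalent to the regularity of each local factor $L_v(s, \Ad^0, \pi_v)$ at both $s = 0$ and $s = 1$. Recall that $\Gamma_\R(s) = L(s, \id_{W_\R})$ and $\Gamma_\C(s) = L(s, \id_{W_\C})$ each have a simple pole at $s = 0$ but are finite at $s = 1$, while all shifted variants $\Gamma_\R(s + a)$, $\Gamma_\C(s + a)$ with $a > 0$ are finite at both points. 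The crux is therefore to count the multiplicity $\mu_v$ of the trivial character in $\Ad^0 V_v = \End(V_v) \ominus \id$, where $V_v$ is the local Langlands parameter of $\pi_v$.

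For this I would invoke the explicit description of $\pi_\infty$ for cohomological cuspidal $\pi$ due to Clozel (recalled in Sect.\,2.4 of \cite{raghuram-2013}). At a real place $v$, $V_v = \sigma_{k^v_1} \oplus \cdots \oplus \sigma_{k^v_m}$ if $n = 2m$, or $V_v = \sigma_{k^v_1} \oplus \cdots \oplus \sigma_{k^v_m} \oplus \chi_v$ if $n = 2m + 1$, where $\sigma_k := \Ind_{W_\C}^{W_\R}((z/|z|)^k)$ is an irreducible $2$-dimensional Weil representation, the indices $k^v_i \geq 1$ are pairwise distinct, and $\chi_v$ is a character of $W_\R^{\rm ab} = \R^\times$; at a complex place, $V_v$ is a direct sum of $n$ pairwise distinct characters of $\C^\times$. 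The tensor rules $\sigma_k \otimes \sigma_k^\sv = \sigma_{2k} \oplus \id \oplus \sgn$, $\sigma_k \otimes \sigma_l^\sv$ containing no trivial summand for $k \neq l$, and $\chi \otimes \chi^{-1} = \id$ for any character $\chi$, then yield $\mu_v = n - 1$ at a complex place, and $\mu_v = m - 1$ or $\mu_v = m$ at a real place according as $n = 2m$ or $n = 2m + 1$.

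Finally, regularity of $L_\infty(s, \Ad^0, \pi)$ at $s = 0$ is equivalent to $\mu_v = 0$ for every infinite place $v$. A complex place forces $n \leq 1$, which is excluded; hence $F$ must be totally real. A real place then requires $n \in \{1, 2\}$, and discarding the trivial $n = 1$ forces $n = 2$. Conversely, for $F$ totally real and $n = 2$, at each real $v$ the parameter is a single $\sigma_{k - 1}$ with $k \geq 2$, whence $L_v(s, \Ad^0, \pi_v) = \Gamma_\R(s + 1) \cdot \Gamma_\C(s + k - 1)$ is regular at both $s = 0$ and $s = 1$. The main effort in the argument lies in the correct identification of the Langlands parameter at infinity from the cohomologicality of $\pi$ together with the combinatorial count of trivial-character summands in $\End(V_v) \ominus \id$; once these are secured, the equivalence is immediate.
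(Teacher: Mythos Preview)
Your proof is correct and follows essentially the same strategy as the paper's: identify the archimedean Langlands parameters of a cohomological $\pi$, count copies of the trivial representation in $\Ad^0 V_v = V_v \otimes V_v^\sv \ominus \id$, and observe that precisely these copies contribute a $\Gamma$-factor with a pole at $s=0$. The only cosmetic difference is that you compute the exact multiplicity $\mu_v$ uniformly via the tensor rules for $W_\R$- and $W_\C$-representations, whereas the paper argues case-by-case (complex place; real with $n\ge 3$; real with $n=2$) using lower bounds for $\mu_v$ that already suffice.
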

\begin{proof}
Suppose $F$ has a complex place, say  $v$. Let $\sr_L(\pi_v)$ denote the Langlands parameter--which is an $n$-dimensional semi-simple representation of the Weil group $W_\C \simeq \C^\times$ of $\C$--of the representation $\pi_v$ of $\GL_n(\C).$ Using the local Langlands correspondence for $\GL_n(\C)$ (see \cite{knapp}) and 
explicit knowledge of $\pi_v$ (see, for example, \cite[Sect.\,2.4.2]{raghuram-2013}), we have $\sr_L(\pi_v) = \oplus_{i=1}^n z^{a_i} \bar z^{b_i}$ for half-integers 
$a_i, b_i.$ Hence 
$$
\sr_L(\pi_v) \otimes \sr_L(\tilde \pi_v) \ = \ \bigoplus_{1 \le i,j \le n} z^{(a_i -a_j)} \bar z^{(b_i-b_j)}.
$$ 
We know that $L(s, z^p \bar z^q) = 2 (2\pi)^{-(s+p_0)} \Gamma (s+p_0)$ where $p_0=\text{max}\{p,q\}$. The trivial character $\id$ appears in $\sr_L(\pi) \otimes \sr_L(\tilde \pi)$ at least $n$ times. This implies that $L_\infty (s, \Ad^0, \pi)$ has a factor of the form $\Gamma (s)^{n-1}$.  Since $n \ge 2$, this implies that $L_\infty (s, \Ad^0, \pi)$ is not regular at $s=0$. Hence, if $F$ has a complex place then $s=1$ is not critical for $L(s, \Ad^0, \pi).$

Now suppose that $F$ is totally real. Then using \cite{knapp} and 
\cite[\S\,2.4.1]{raghuram-2013}, the Langlands parameters of $\pi_v$ and $\tilde \pi_v$ for any infinite place $v$ 
are given by: 
\begin{equation*}
\begin{split}
\sr_L(\pi_v) &= I(\chi_{l_1})(-w/2) \oplus \dots \oplus  I(\chi_{l_{n/2}})(-w/2), \text{ and } \\
\sr_L(\tilde \pi_v) &= I(\chi_{l_1})(w/2) \oplus \dots \oplus  I(\chi_{l_{n/2}})(w/2)
\end{split}
\end{equation*}
when $n$ is even, and 
\begin{equation*}
\begin{split}
\sr_L(\pi_v) &= I(\chi_{l_1})(-w/2) \oplus \dots \oplus  I(\chi_{l_{[n/2]}})(-w/2) \oplus \varepsilon (-w/2), \text{ and } \\
\sr_L(\tilde \pi_v) &= I(\chi_{l_1})(w/2) \oplus \dots \oplus  I(\chi_{l_{[n/2]}})(w/2) \oplus \varepsilon (w/2)
\end{split}
\end{equation*}
when $n$ is odd; where, $l_j \geq 1$ and $w$ are integers, and $I(\chi_l)$ stands for the representation of the Weil group $W_\R$ of $\R$ induced from the character $\chi_l$ of $\C^\times$ that sends $z = re^{i\theta}$ to 
$e^{il\theta}.$ Consider two sub-cases: 
\begin{itemize}
\item
Suppose $F$ is totally real and $n\ge 3$, then $\id$ appears at least twice in $\sr_L (\pi_v) \otimes \sr_L (\tilde \pi_v)$. Hence $\Gamma (s/2)$ is a factor of $L_\infty (s, \Ad^0, \pi)$ and $s=1$ is not critical for $L(s, \Ad^0, \pi).$ 

\item
Finally, suppose $F$ is totally real and $n=2$, then we see that 
$$\sr_L(\pi_v) \otimes \sr_L(\tilde \pi_v) = I(\chi_{2l_1}) \oplus (\text{sgn}) \oplus \id.$$ 
This implies that up to nonzero constants and exponential factors, 
$$
L_\infty (s, \Ad^0, \pi) \sim \Gamma (s+l_1) \Gamma \left(\frac{s+1}{2} \right). 
$$
Hence $s=1$ is critical. 
\end{itemize}
This completes the proof of the proposition.
\end{proof}

\medskip

\subsubsection{\bf A noncritical value for symmetric fourth $L$-function of $\GL_2$}
Let's record an interesting consequence of Thm.\,\ref{thm:l-value} for certain symmetric power $L$-functions. 
The reader should consult \cite{raghuram-2013} for details concerning symmetric power transfers and symmetric power $L$-functions. 

\smallskip
 
Let $\pi$ be a cohomological cuspidal automorphic representation of $\GL_2/F,$ and assume that 
the central character of $\pi$ is trivial. In particular $\pi = \tilde \pi$ and $L(1, {\rm Ad}^\circ, \pi) = L(s, \Sym^2(\pi)).$ 
 We have: 
\begin{equation}
\label{eqn:sym-2}
L(1, \Sym^2(\pi)) \ \approx \ 
\omega_F \cdot \fr p_\ram (\pi) \cdot \fr p_\infty (\pi) \cdot \fr p^\varepsilon (\pi) \cdot \fr q^{-\varepsilon} (\pi), 
\end{equation}
for any character $\varepsilon$, and by $\approx$ we mean up to an element of the rationality field $\Q(\pi).$ 
Note that $L(1, \Sym^2(\pi))$ is critical if and only if $F$ is totally real. Furthermore, $L(1, \Sym^2(\pi)) \neq 0.$ 

\smallskip

Assume $\pi$ is not dihedral, and so its symmetric square transfer $\Sym^2(\pi)$ is also a cuspidal automorphic representation; see \cite{gelbart-jacquet}. Also, as explained in \cite{raghuram-2013}, we know that 
$\Sym^2(\pi)$ is cohomological. Since $\pi$ is self-dual, so is $\Sym^2(\pi)$; we have:
$$
L(s,  \Sym^2(\pi) \times \Sym^2(\pi)) \ = \
L(s,  \Sym^4(\pi)) L(s,  \Sym^2(\pi)) \tilde\zeta_F(s), 
$$
which may be checked by checking equality locally everywhere. In particular, we have: 
$$
L(s, \Ad^\circ, \Sym^2(\pi)) \ = \ L(s,  \Sym^4(\pi)) L(s,  \Sym^2(\pi)), 
$$
and they are all nonzero at $s=1.$ Applying Thm.\,\ref{thm:l-value} to $L(s, \Ad^\circ, \Sym^2(\pi))$ and using (\ref{eqn:sym-2}) we get
\begin{equation}
\label{eqn:sym-4}
L(1,  \Sym^4(\pi)) \ \approx \  
\frac{\fr p_\ram (\Sym^2(\pi)) \cdot \fr p_\infty (\Sym^2(\pi)) \cdot 
\fr p^\eta (\Sym^2(\pi)) \cdot \fr q^{\eta} (\Sym^2(\pi))}
{\fr p_\ram (\pi) \cdot \fr p_\infty (\pi) \cdot \fr p^\varepsilon (\pi) \cdot \fr q^{-\varepsilon} (\tilde \pi)}, 
\end{equation}
where we take $\eta$ to be the permissible character for $\Sym^2(\pi).$ Note that both 
$L(1, \Ad^\circ, \Sym^2(\pi))$ and $L(1,  \Sym^4(\pi))$ are non-critical $L$-values. 

\smallskip

Clearly, both (\ref{eqn:sym-2}) and (\ref{eqn:sym-4}) can be stated in a Galois-equivariant fashion.  
Using results in \cite{raghuram-imrn} and \cite{raghuram-2013}, we may proceed in this fashion, and by induction, 
get algebraicity results for 
$$
L(1, \Sym^{2m}(\pi)), 
$$
which are unconditional for $m \leq 4$, and depend on unproven instances of Langlands's functoriality for higher even symmetric powers.

\bigskip

\section{\bf Discriminant calculations and cohomological congruences}

\subsection{Some linear algebra}
\label{lattice-cong-mod}

Recall that $E$ is an extension of $\Q_p$ as in the introduction, and denote the residue field $\calo/\wp$ by $\kappa.$
Let $V$ and $\tilde V$ be finite-dimensional vector spaces over a field $E$. Let $L$ and $\tilde L$ be $\calo$-lattices in $V$ and $\tilde V,$ respectively. Consider a non-degenerate bilinear form $\langle \ , \ \rangle : V \times \tilde V \to E$. For the lattice $L$, define its dual lattice to be $L^* = \{ v \in \tilde V \mid \langle v, w \rangle \in \calo,\  \forall w \in L \}$. One can similarly define $\tilde L^*$, the dual lattice for $\tilde L$ in $V$.  We say that the pairing restricts to a perfect pairing $L \times \tilde L \to \calo$ if $L^* = \tilde L$ and $\tilde L^* = L$.

Let $V=V_1 \oplus V_2$ and $\tilde V = \tilde V_1 \oplus \tilde V_2$ be decompositions of $V$ and $\tilde V$ into subspaces that respect the pairing above (i.e., $V_1 \perp \tilde V_2$ and $V_2 \perp \tilde V_1$). Let $\pi_i$ be the projections of $V$ onto $V_i$ and let $\tilde \pi_i$ be projections of $\tilde V$ onto $\tilde V_i$.  For $i=1,2$, define
$$L_i = L \cap V_i, \ \tilde L_i = \tilde L \cap \tilde V_i \quad \text{and}\quad \Lambda_i = \pi_i (L), \ \tilde \Lambda_i = \tilde \pi_i (\tilde L).$$
We are interested in the congruence module for the lattice $L$ with respect to this decomposition. This is defined as 
$$
\calc (L; V_1, V_2) \ := \ \frac{\Lambda_1 \oplus \Lambda_2}{L}  \ \cong \  \frac{L}{L_1 \oplus L_2}.
$$
We know that the projection maps induce the following isomorphisms:
$$
\xymatrix@1{ \frac{\Lambda_1}{L_1} \ & \ \frac{L}{L_1 \oplus L_2} \ar[l]_-{\pi_1} \ar[r]^-{\pi_2} \ & \ 
\frac{\Lambda_2}{L_2}. }
$$
In order to produce congruences we need to show that $\calc(L; V_1, V_2) \not = 0$. We know that
$$
\text{disc}(L_1 \times \tilde L_1) \ =_{\calo^\times}  \ |\tilde L_1^*/L_1|, 
$$
where $\tilde L_1^*= \{ v \in V_1 \mid \langle v, w \rangle \in \calo,\  \forall w \in \tilde L_1 \}$. The following lemma shows that the congruence module is nonzero if and only if $v_\wp ( \text{disc} (L_1 \times \tilde L_1)) >0$.

\begin{lemma}
$\Lambda_1 = \tilde L_1^*$.
\end{lemma}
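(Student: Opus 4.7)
The plan is to prove the two inclusions $\Lambda_1 \subseteq \tilde L_1^*$ and $\tilde L_1^* \subseteq \Lambda_1$ by separate arguments, with the second (and harder) one relying crucially on the perfectness of the pairing $L \times \tilde L \to \calo$ and on the fact that $\calo$ is a discrete valuation ring.

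First I would handle the easy inclusion $\Lambda_1 \subseteq \tilde L_1^*$. Any $v \in \Lambda_1$ can be written as $v = \pi_1(l)$ for some $l \in L$, so $l - v \in V_2$. For $w \in \tilde L_1 \subset \tilde V_1$ the orthogonality $V_2 \perp \tilde V_1$ gives $\langle v, w\rangle = \langle l, w\rangle$, which lies in $\calo$ since $l \in L$, $w \in \tilde L$ and the pairing on $L \times \tilde L$ is integral. Hence $v \in \tilde L_1^*$.

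For the reverse inclusion $\tilde L_1^* \subseteq \Lambda_1$, the strategy is to reinterpret both sides as $\calo$-duals of $\tilde L_1$. On one hand, since the pairing $V_1 \times \tilde V_1 \to E$ is non-degenerate (it inherits non-degeneracy from $V \times \tilde V \to E$ via the orthogonal decompositions), the evaluation map $V_1 \to \Hom_E(\tilde V_1, E)$, $v \mapsto \langle v, \cdot\rangle$, is an isomorphism and restricts to an isomorphism of $\calo$-modules $\tilde L_1^* \stackrel{\sim}{\to} \Hom_\calo(\tilde L_1, \calo)$. On the other hand, the perfectness of the pairing on $L \times \tilde L$ means $L \stackrel{\sim}{\to} \Hom_\calo(\tilde L, \calo)$. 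The inclusion $\tilde L_1 = \tilde L \cap \tilde V_1 \hookrightarrow \tilde L$ has torsion-free cokernel (it embeds into $\tilde V/\tilde V_1$), so the quotient is a free $\calo$-module and the inclusion splits; hence the restriction map $\Hom_\calo(\tilde L, \calo) \twoheadrightarrow \Hom_\calo(\tilde L_1, \calo)$ is surjective. Composing, one obtains a surjection $L \twoheadrightarrow \Hom_\calo(\tilde L_1, \calo) \cong \tilde L_1^*$, sending $l \mapsto \langle l, \cdot\rangle|_{\tilde L_1}$.

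The key observation is that, again using $V_2 \perp \tilde V_1$, this map $L \to \tilde L_1^*$ factors as $L \twoheadrightarrow \Lambda_1 \hookrightarrow \tilde L_1^*$, where the second arrow is the inclusion from the first paragraph. Surjectivity of the composition therefore forces surjectivity of the inclusion, giving $\Lambda_1 = \tilde L_1^*$. The main point to verify carefully is the splitting of $\tilde L_1 \hookrightarrow \tilde L$ (and hence surjectivity of restriction in $\Hom_\calo(-,\calo)$), which is where the DVR hypothesis on $\calo$ enters; everything else is a formal consequence of the perfect pairing and the orthogonality of the given decompositions.
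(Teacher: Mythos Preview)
Your proof is correct and rests on the same three ingredients as the paper's: the perfectness of $L \times \tilde L \to \calo$, the saturation of $\tilde L_1$ inside $\tilde L$ (equivalently, that $\tilde L/\tilde L_1$ is torsion-free over the DVR $\calo$), and the orthogonality $V_2 \perp \tilde V_1$. The paper packages these concretely: it extends a basis $u_1,\dots,u_s$ of $\tilde L_1$ to a basis $u_1,\dots,u_t$ of $\tilde L$, takes the dual basis $u_1^*,\dots,u_t^*$ of $L$ via perfectness, observes (implicitly via orthogonality) that $\pi_1(u_j^*)=0$ for $j>s$, so $\Lambda_1$ is spanned by $\pi_1(u_1^*),\dots,\pi_1(u_s^*)$, and then checks directly that any $v\in \tilde L_1^*$ has integral coordinates in this basis. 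Your argument is the basis-free rephrasing of exactly this: the extension of basis becomes the splitting of $\tilde L_1 \hookrightarrow \tilde L$, the dual basis becomes the isomorphism $L \cong \Hom_\calo(\tilde L,\calo)$, and the coordinate computation becomes the factorization $L \twoheadrightarrow \Lambda_1 \hookrightarrow \tilde L_1^*$. Neither approach is more general than the other; yours makes the role of perfectness and of $\calo$ being a PID more transparent, while the paper's is shorter.
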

\begin{proof}
One can check that $\Lambda_1 \subset \tilde L_1^*$. Let $u_1, \dots, u_t$ be a basis for $\tilde L$ such that $u_1, \dots, u_s$ is a basis for $\tilde L_1$. Let $u_1^*, \dots, u_t^*$ be the corresponding dual basis for the dual lattice $L$. It is then easy to check that $\pi_1 (u_1^*), \dots ,\pi_1 (u_s^*)$ is a basis for $V_1$ and they generate the lattice $\Lambda_1$ over $\calo$. Let $v \in \tilde L_1^*$, write $v = \sum a_i \pi_1 (u_i^*)$ with $a_i \in E$. Then $\calo \ni \langle v, u_i \rangle = \sum_j a_j \langle \pi_1(u^*_j), u_i \rangle = a_i$.
\end{proof}

Now let $\calh^\circ \subset \End_\calo (L)$ be the subalgebra generated by a collection of pairwise commuting operators acting on $L$ and let $\calh^\circ_E = \calh^\circ \otimes E \subset \End_E(V)$. The decomposition $V=V_1\oplus V_2$ gives rise to idempotents $e_1, e_2 \in \calh_E^\circ$. Let $\calh_i^\circ = \calh^\circ \cap e_i \calh^\circ_E$, and we define the congruence module
$$
Q(\calh^\circ; e_1, e_2) \ = \ \frac{\calh^\circ}{\calh^\circ_1 \oplus \calh^\circ_2} \ \cong \ \frac{e_i \calh^\circ}{\calh^\circ_i}.
$$ 

There is a natural map $Q(\calh^\circ; e_1, e_2) \to \End_\calo (\calc(L; V_1, V_2)),$ and the image of this map is the subalgebra generated by the operators induced by $T \in \calh^\circ$. Clearly, we have
\begin{equation}
\label{eqn:strong-congruence}
\calc(L; V_1, V_2)\not = 0  \ \implies \ Q(\calh^\circ; e_1, e_2)\not = 0.
\end{equation}
 Suppose $Q(\calh^\circ; e_1, e_2) \not = 0$, then there there exist maximal ideals $\fr M, \fr M_1\ \mathrm{ and }\ \fr M_2$ such that 
$$
\calh^\circ_1 \oplus \calh^\circ_2  \ \subset \ \fr M \subset \calh^\circ,  \quad
\calh^\circ_1 \ \subset \ \fr M_1 \subset e_1\calh^\circ,   
\quad \mathrm{ and } \quad 
\calh^\circ_2 \ \subset \  \fr M_2 \ \subset \ e_2\calh^\circ.
$$ 
We then have the following commutative diagram:
$$\xymatrix@1{
 & & \calh^\circ \ar[rrd] \ar[lld] & & \\
 e_1 \calh^\circ \ar[ddd] \ar[rrd]^{\chi_1} \ar[rrdd]_{\bar \chi_1} & & & & e_2 \calh^\circ \ar@{-->}[lld]_{\chi_2} \ar[ddd] \ar[lldd]^{\bar \chi_2}  \\
 & & \calo \ar[d] & &  \\
 & & \kappa & &  \\
 e_1\calh^\circ/\fr M_1 \ar@{=}[rrrr]\ar[rru]^{\sim}  & & & &  e_2\calh^\circ/\fr M_2 \ar[llu]_{\sim}  \\
}$$

The maps $\chi_1$ and $\bar \chi_1$ are induced by $\pi$ and we define $\bar \chi_2$ by the commutativity of the diagram. A lemma of Deligne and Serre (\cite[Lemma 6.11]{DS}) gives us a character $\chi_2$ that lifts $\bar \chi_2$ and a Hecke eigenclass in $\Theta \in L_2$ such that the Hecke operators act on $\Theta$ via $\chi_2$, after possibly enlarging $E.$

\medskip

\subsection{Discriminants and congruence modules}

Let $\lambda$ be a strongly-pure dominant integral weight, and $\pi \in {\rm Coh}(G, K_f, \lambda)$ as before. 

\subsubsection{\bf Case I}
We consider the following finite-dimensional vector spaces over $E$:
$$ 
V \ = \ H^b_! (\SGK, \tcalm_{\lambda, E}) 
\quad \text{and} \quad 
\tilde V \ = \ H^t_! (\SGK, \tcalm_{ \tilde \lambda, E}) .
$$ 
Let $V_1$ and $\tilde V_1$ be the following subspaces of $V$ and $\tilde V,$ respectively:
$$ 
V_1 \ = \ \bigoplus_\varepsilon H^b_! (\SGK, \tcalm_{\lambda, E}) (\pi_f \times  \varepsilon) 
\quad \text{and} \quad 
\tilde V_1 \ = \ \bigoplus_{ \varepsilon} H^t_! (\SGK, \tcalm_{ \tilde \lambda, E}) (\tilde \pi_f \times \tilde \varepsilon), 
$$
where the direct sum is over characters $\varepsilon$ on $K_\infty/ K_\infty^\circ$ that are permissible for $\pi$. 
Let $V_2$ (resp., $\tilde V_2$) be a Hecke complement of $V_1$ (resp., $\tilde V_1$) in $V$ (resp., $\tilde V$). Let $L$ and $\tilde L$ be the following lattices in $V$ and $\tilde V,$ respectively:
$$ 
L \ = \ \Hbar^b_! (\SGK, \tcalm_{\lambda, \calo}) 
\quad \text{and} \quad 
\tilde L \ =\  \Hbar^t_! (\SGK, \tcalm_{\tilde \lambda, \calo}).
$$ 
We see that 
\begin{equation*}
\begin{split}
L_1 & \ = \ L \cap V_1 \ = \ \bigoplus_\varepsilon \Hbar^b (\SGK, \tcalm_{\lambda, \calo}) (\pi_f \times \varepsilon),\quad \mathrm{and}\\
\tilde L_1 & \ = \ \tilde L \cap \tilde V_1 \ = \ 
\bigoplus_{\varepsilon} \Hbar^t (\SGK, \tcalm_{\tilde{\lambda}, \calo}) (\tilde \pi_f \times \tilde \varepsilon).
\end{split}
\end{equation*}

Cup product induces the following bilinear forms:
$$\langle \ , \ \rangle : V \times \tilde V \to E \quad \text{and} \quad \langle \ , \ \rangle : L \times \tilde L \to \calo.$$
The first pairing is non-degenerate. We assume that the second pairing is perfect, i.e., the induced maps $L \to \Hom_\calo (\tilde L, \calo)$ and $\tilde{L} \to \Hom_\calo (L, \calo)$ are isomorphisms. There is a finite set 
of rational primes $S_1$ outside which the perfectness assumption holds; 
see the discussion in \S \ref{excluded-primes} below, and especially see (\ref{eqn:S1}). 

This puts us in the situation of \S \ref{lattice-cong-mod}, and we consider the congruence module $\calc (L; V_1, V_2)$ of the lattice associated to this decomposition. We have chosen an $\calo$-basis 
$\{ \vartheta_{b, \varepsilon}^\circ (\pi) \}_\varepsilon $ (resp., 
$\{ \tilde \vartheta_{t, \tilde \varepsilon}^\circ (\tilde \pi) \}_{\varepsilon}$) for $L_1$ (resp., $\tilde L_1$). The discriminant with respect to this basis is 
\begin{equation}
\label{eqn:disc-and-theta-pairing}
\mathrm{disc}(L_1 \otimes \tilde L_1) \ = \ \prod_\varepsilon \langle  \vartheta_{b,\varepsilon}^\circ (\pi), \tilde \vartheta_{t, \tilde \varepsilon}^\circ (\tilde \pi) \rangle, 
\end{equation}
since the $( \pi \times \varepsilon)$-component of $V$ pairs non-trivially only with the $(\tilde \pi \times \tilde \varepsilon)$-component of $\tilde V$. This shows that the congruence module for $L$ is zero if and only if the above 
discriminant is a unit in $\calo.$ If for some $\varepsilon$,
\begin{equation}
\begin{split}
v_\wp (\Lalg (1, \Ad^0, \pi, \varepsilon)) >0  
& \ \implies \ v_\wp (\prod_\varepsilon \Lalg (1, \Ad^0, \pi, \varepsilon)) >0 \\
& \ \iff \  v_\wp (\text{disc} (L_1 \otimes \tilde L_1)) >0, \quad  \mbox{(by 
(\ref{eqn:L-alg-value-pairing}) and (\ref{eqn:disc-and-theta-pairing}))} \\ 
& \ \iff \  \wp \in \text{Supp}_\calo (\calc (L; V_1, V_2)).
\end{split}								   
\end{equation}
Now using the discussion leading up to (\ref{eqn:strong-congruence}) we get 
$\wp \in \text{Supp}_\calo (Q(\calh^\circ; e_1, e_2)).$ This implies that there is Hecke module $\pi'_f$ contributing to 
$V_2$ which is congruent to $\pi_f$; this $\pi'_f$ corresponds to the character $\chi_2$ produced by the Deligne--Serre lemma as above. In other words, we get that an automorphic representation $\pi'$ whose finite part contributes to inner cohomology, and by definition of $V_1$ we know that $\pi' \not\simeq \pi$, and such that $\pi' \equiv \pi \pmod{\wp}.$

\medskip
\subsubsection{\bf Case II}
We now assume that the weight $\lambda$ is parallel, see \cite[\S 2.3.4]{raghuram-2013} for the precise definition. This implies that for any $\sigma \in \Aut(\C),$ the representation 
${^\sigma} \pi$ is also cohomological with respect to the weight $\lambda$.  Consider the finite-dimensional vector spaces over $E$:
$$ 
V \ = \ H^b_! (\SGK, \tcalm_{\lambda, E}) \quad \text{and} \quad 
\tilde V \ = \ H^t_! (\SGK, \tcalm_{ \tilde \lambda, E}). 
$$ 
Let $V_1$ and $\tilde V_1$ be the following subspaces of $V$ and $\tilde V,$ respectively:
$$ 
V_1 \ = \ \bigoplus_{\sigma, \varepsilon} H^b_! (\SGK, \tcalm_{\lambda, E}) ({^\sigma} \pi_f \times \varepsilon) 
\quad \text{and} \quad 
\tilde V_1 \ = \ \bigoplus_{\sigma, \varepsilon} H^t_! (\SGK, \tcalm_{ \tilde \lambda, E}) 
({^\sigma} \tilde \pi_f \times  \tilde \varepsilon), 
$$
where the direct sum is over characters $\varepsilon$ on $K_\infty/ K_\infty^\circ$ that are permissible for $\pi$ and over
$\sigma$ which runs over the finite set of all embeddings $\Q (\pi) \to \C$. 
Let $V_2$ (resp., $\tilde V_2$) be a Hecke-complement of $V_1$ (resp., $\tilde V_1$) in $V$ (resp., $\tilde V$). 
Let $L$ and $\tilde L$ be the following lattices in $V$ and $\tilde V,$ respectively:
$$ 
L \ = \ \Hbar^b_! (\SGK, \tcalm_{\lambda, \calo}) 
\quad \text{and} \quad 
\tilde L \ = \ \Hbar^t_! (\SGK, \tcalm_{\tilde \lambda, \calo}).
$$ 
We see that 
\begin{equation*}
\begin{split}
L_1 & \ = \ L \cap V_1 \supseteq L(\pi) \ := \ 
\bigoplus_{\sigma, \varepsilon} \Hbar^b_! (\SGK, \tcalm_{\lambda, \calo}) ({^\sigma} \pi_f \times \varepsilon),\quad \mathrm{and}\\
\tilde L_1 & \ = \ \tilde L \cap \tilde V_1 \supseteq  L(\tilde \pi) \ := \ 
\bigoplus_{\sigma, \varepsilon} \Hbar^t_! (\SGK, \tcalm_{ \tilde \lambda, \calo}) 
({^\sigma} \tilde \pi_f \times  \tilde \varepsilon).
\end{split}
\end{equation*}
We further assume that after excluding a finite set of rational primes if necessary the above inclusions 
are equalities. 
As before, cup product induces the following bilinear forms:
$$\langle \ , \ \rangle : V \times \tilde V \to E \quad \text{and} \quad \langle \ , \ \rangle : L \times \tilde L \to \calo.$$
The first pairing is non-degenerate and the second pairing is perfect after excluding a finite set $S_2$ of primes, where\begin{equation}
\label{eqn:S2}
S_2 = S_1 \cup \{p \ : \  L(\pi) \varsubsetneq L_1 \}.
\end{equation}

We now have $\calo$-bases $\{ \vartheta_{b, {^\sigma} \varepsilon}^\circ ({^\sigma}\pi) \}_{\sigma, \varepsilon} $  
and $\{ \tilde \vartheta_{t, \tilde \varepsilon}^\circ ({^\sigma} \tilde \pi) \}_{\sigma, \varepsilon}$ for $L(\pi)$ 
and $L(\tilde \pi)$ respectively. Calculating the discriminant with respect to this basis, we see that
$$
\mathrm{disc}(L(\pi) \otimes L(\tilde \pi)) \ = \ \prod_{\sigma, \varepsilon} 
\langle  \vartheta_{b, \varepsilon}^\circ ({^\sigma} \pi), 
\tilde \vartheta_{t, \tilde \varepsilon}^\circ ({^\sigma} \tilde \pi) \rangle, 
$$
since the $( {^\sigma} \pi \times \varepsilon)$-component of $V$ pairs non-trivially with the 
$({^\tau} \tilde \pi \times \tilde \varepsilon)$-component of $\tilde V$ if and only if $\sigma = \tau$. Since the discriminants of $L(\pi) \otimes L(\tilde \pi)$ and $L_1 \otimes \tilde L_1$ differ by a unit in $\calo$, for any $\varepsilon$ we have
\begin{equation*}
\begin{split}
v_\wp (\Lalg (1, \Ad^0, \pi, \varepsilon)) >0  & \implies v_\wp (\prod_{\varepsilon} 
\Lalg (1, \Ad^0, {^\sigma} \pi,  \varepsilon)) >0 \\
								   & \iff  v_\wp (\text{disc} (L_1 \otimes \tilde L_1)) > 0 \\
								   & \iff  \wp \in \text{Supp}_\calo (\calc (L; V_1, V_2)).
\end{split}								   
\end{equation*}
As before, we get that an automorphic representation $\pi'$ whose finite part contributes to $V_2$, and by definition of $V_1$ we know that $\pi' \not\simeq {}^\sigma\!\pi$, and such that $\pi' \equiv \pi \pmod{\wp}.$

\medskip
\subsubsection{\bf Case III}
We continue with our assumption that $\lambda$ is parallel and consider the following finite-dimensional vector spaces over $E$.
$$ 
V \ = \ \bigoplus_{\sigma, \varepsilon} H^b_! (\SGK, \tcalm_{\lambda, E}) ({^\sigma} \pi_f \times \varepsilon) \quad \text{and} \quad 
\tilde V \ = \ \bigoplus_{\sigma, \varepsilon} H^t_! (\SGK, \tcalm_{ \tilde \lambda, E}) 
({^\sigma} \tilde \pi_f \times  \tilde \varepsilon).
$$
Let $V_1$ and $\tilde V_1$ be the following subspaces of $V$ and $\tilde V,$ respectively:
$$ 
V_1 \ = \ \bigoplus_\varepsilon H^b_! (\SGK, \tcalm_{\lambda, E}) (\pi_f \times  \varepsilon)
 \quad \text{and} \quad 
\tilde V_1 \ = \ \bigoplus_{\varepsilon} H^t_! (\SGK, \tcalm_{ \tilde \lambda, E}) (\tilde \pi_f \times \tilde \varepsilon).
$$
(This case is interesting only when $\Q \subsetneq \Q(\pi)$ which makes $V_1 \subsetneq V$; 
see Rem.\,\ref{rem:explanations} below.)
Let $V_2$ (resp., $\tilde V_2$) be a Hecke-complement of $V_1$ (resp., $\tilde V_1$) in $V$ (resp.\,$\tilde V$).
Let $L$ and $\tilde L$ be the following $\calo$-lattices in $V$ and $\tilde V,$ respectively:
$$ 
L \ = \ \Hbar^b_! (\SGK, \tcalm_{\lambda, \calo}) \cap V 
\quad \text{and} \quad 
\tilde L \ = \ \Hbar^t_! (\SGK, \tcalm_{\tilde \lambda, \calo}) \cap \tilde V.
$$ 
We see that 
$$
L_1  \ = \ L \cap V_1 \ = \bigoplus_\varepsilon \Hbar^b (\SGK, \tcalm_{\lambda, \calo}) (\pi_f \times \varepsilon),
\quad \mathrm{and} \quad 
\tilde L_1  \ = \ \tilde L \cap \tilde V_1 \ = \bigoplus_{\varepsilon} \Hbar^t (\SGK, \tcalm_{\tilde{\lambda}, \calo})
(\tilde \pi_f \times \tilde \varepsilon).
$$

Cup product induces the following bilinear forms:
$$
\langle \ , \ \rangle : V \times \tilde V \to E \quad \text{and} \quad \langle \ , \ \rangle : L \times \tilde L \to \calo.
$$
The first pairing is non-degenerate and we assume that the second pairing is perfect after excluding a finite set 
$S_3$ of primes. Note that by definition $S_1 \subset S_3$. Calculating the discriminant with respect to the bases $\{ \vartheta_{b, \varepsilon}^\circ (\pi) \}_{\varepsilon} $  and $\{ \tilde \vartheta_{t, \tilde \varepsilon}^\circ (\tilde \pi) \}_{\varepsilon}$ for $L_1$ and $\tilde L_1$,  we see that
$$\mathrm{disc}(L_1 \otimes \tilde L_1) = \prod_\varepsilon \langle  \vartheta_{b,\varepsilon}^\circ (\pi), \tilde \vartheta_{t, \tilde \varepsilon}^\circ (\tilde \pi) \rangle$$
since the $( \pi \times \varepsilon)$-component of $V$ pairs non-trivially only with the $(\tilde \pi \times \tilde \varepsilon)$-component of $\tilde V$. This shows, for any $\varepsilon$, that
\begin{equation*}
\begin{split}
v_\wp (\Lalg (1, \Ad^0, \pi, \varepsilon)) >0  & \ \implies \ v_\wp (\prod_\varepsilon \Lalg (1, \Ad^0, \pi, \varepsilon)) >0 \\
								   & \ \iff \ v_\wp (\text{disc} (L_1 \otimes \tilde L_1)) > 0 \\
								   & \ \iff \ \wp \in \text{Supp}_\calo (\calc (L; V_1, V_2)).
\end{split}								   
\end{equation*}
These are precisely the primes in $S_2 \setminus S_1$ and hence Case II and Case III do not occur simultaneously, i.e., 
\begin{equation}
\label{eqn:S3}
S_3 \ = \ S_1 \cup \{\mbox{the congruence primes from {\rm Case II}} \}.
\end{equation}
As before, we get that an automorphic representation $\pi'$ whose finite part contributes to $V_2$, and by definition of $V_1$, this means that we get an embedding $\sigma : \Q(\pi) \to \C$ such that $\pi' = {}^\sigma\!\pi \not\simeq \pi$, and such that $\pi' \equiv \pi \pmod{\wp}.$

\medskip
\subsubsection{\bf The excluded primes}\label{excluded-primes}

We briefly describe the set $S_1$ of excluded primes. This calculation is similar to \cite[Thm.\,3]{Gh}. Recall that the set $S_1$ is the set of primes $\wp$ such that the Poincar\'e pairing between the following integral cohomology groups
$$ 
\Hbar^b_! (\SGK, \tcalm_{\lambda, \calo}) \quad \text{and} \quad 
\Hbar^t_! (\SGK, \tcalm_{\tilde \lambda, \calo})
$$
is perfect. The paring induces the following maps:
\begin{equation}\label{perfectness}
\begin{split}
\Hbar^b_! (\SGK, \tcalm_{\lambda, \calo}) &\to \Hom_\calo ( \Hbar^t_! (\SGK, \tcalm_{\tilde \lambda, \calo}), \calo), \text{ and } \\
\Hbar^t_! (\SGK, \tcalm_{\tilde \lambda, \calo}) &\to \Hom_\calo ( \Hbar^b_! (\SGK, \tcalm_{\lambda, \calo}), \calo).
\end{split}
\end{equation}
Since these are free $\calo$-modules of finite rank, it suffices to show that one of these maps is an isomorphism and we pick the first one. This map is injective because the corresponding map at the level of rational cohomology
$$
\Hbar^b_! (\SGK, \tcalm_{\lambda, E}) \to \Hom_E ( \Hbar^t_! (\SGK, \tcalm_{\tilde \lambda, E}), E)
$$
is injective. Moreover, by \cite[Thm.\,4.8.9]{Ha-AlGeo1}, we have the following isomorphism
$$
\Hbar^b (\SGK, \tcalm_{\lambda, \calo}) \to \Hom_\calo ( H^t_c (\SGK, \tcalm_{\tilde \lambda, \calo}), \calo). 
$$
Using the long exact sequence coming from the Borel--Serre compactification of $\SGK$ 
(see, for example, \cite[\S 2]{harder-raghuram-preprint}), we see that the obstruction to surjectivity in (\ref{perfectness}) is a torsion element of
$$
\frac{H^b (\SGK, \tcalm_{\lambda, \calo}) }{H^b_! (\SGK, \tcalm_{\lambda, \calo}) } 
\subset H^b (\partial \SGK, \tcalm_{\lambda, \calo}), 
$$
where $\partial \SGK$ is the Borel--Serre boundary of the compactification. Hence this pairing is perfect if $p \not \in S_1$, where 
\begin{equation}
\label{eqn:S1}
S_1 = \{ p \mid H^b (\partial \SGK, \tcalm_{\lambda, \calo}) \text{ has $p$-torsion} \}.
\end{equation}

We speculate that this set can be shown to be empty in some cases. There is a Hecke action on all the objects considered here and since we are only interested in congruences for $\pi$, we can localize at appropriate maximal ideals. Let $\fr m$ denote the maximal ideal corresponding to $\pi$ and $\tilde {\fr m}$ denote the maximal ideal corresponding to $\tilde \pi$. Since $V_{\fr m}$ pairs nontrivially only with $\tilde V_{\tilde{\fr m}}$, we only need to consider the perfectness of the pairing $L_{\fr m} \otimes \tilde L_{\tilde{\fr m}}$. By the same argument as above, this pairing is perfect whenever 
$$H^b (\partial \SGK, \calm_{\lambda, \calo})_{\fr m} \text{ is torsion-free}.$$
The structure of the boundary cohomology is studied, for example, in \cite[\S V.2]{Scholze} when $F$ is totally real or CM and the sheaf is trivial. If we assume for example that there exists a Galois representation associated to $\pi$ that is residually absolutely irreducible, then this boundary cohomology localized at ${\fr m}$ vanishes.

\medskip

\subsection{The main theorem on congruences}
\label{proof-thmb}

For each prime ideal $\fr l$ of $F$ and $1\le j \le n$, let $T_{\fr l, j}^\circ$ be the modified Hecke operator acting on the integral cohomology groups (see \cite{Ha}).  Let $\calh^\circ$ be the subalgebra of $\End_\calo (L)$ generated by these operators in each of the three cases above. Two automorphic representations $\pi, \pi^\prime$ are said to be congruent modulo $\wp$ if the characters $\chi_1$ and $\chi_2$ of $\calh^\circ$ 
associated to their integral cohomology classes $(\vartheta_{b,\varepsilon}^\circ (\pi)$ and $\vartheta_{b,\eta}^\circ (\pi^\prime))$ are congruent modulo $\wp$. Note that this definition is independent of the choices $\varepsilon$ and $\eta$. The notion of congruence can also be restated in terms of Satake parameters.
Suppose that $\fr l \not \in S_\pi$ and $\wp \not | \, \fr l$, and $q_{\fr l}$ denotes the cardinality of its residue field. Then by \cite[Thm.\,3.21]{Shi}, the local $L$-function of $\pi$ at $\fr l$ is given in terms of a polynomial of the form
$$ 
\sum_{j=0}^n (-1)^j q_{\fr l}^{j(j-1)/2} \chi_1 (T_{\fr l, j}) X^j.
$$
Since the Satake parameters are the inverse roots of this polynomial, we have
$$
\sum_{i_1<i_2 \cdots < i_j} \alpha_{\fr l, i_1} \cdots \alpha_{\fr l, i_j} \ = \ q_{\fr l}^{j(j-1)/2} \chi_1 (T_{\fr l, j}).
$$

Since $T_{\fr l, j}$ and $T^\circ_{\fr l, j}$ differ by a factor supported only at $\fr l$, we can reformulate the congruence condition in terms of Satake parameters of $\pi$ and $\pi^\prime$ as follows:
\begin{equation*}
\sum_{i_1<i_2 \cdots < i_j} \alpha_{\fr l, i_1} \cdots \alpha_{\fr l, i_j}  
\ \ \equiv \ 
\sum_{i_1<i_2 \cdots < i_j} \alpha^\prime_{\fr l, i_1} \cdots \alpha^\prime_{\fr l, i_j} \quad \pmod{\wp} 
\end{equation*}
for all $\fr l \not \in S_\pi \cup S_{\pi^\prime}$ and $\wp \not | \, \fr l$.

\medskip

\begin{theorem}
\label{thm:congruence}
Let $\pi \in {\rm Coh}(G, \lambda, K_f)$ and let $\varepsilon$ be a permissible signature for $\pi.$ 
Let $E,$ $\calo$ and $\wp$ be as before. Suppose that 
$$ 
v_\wp (\Lalg (1, \Ad^0, \pi, \varepsilon)) > 0.
$$

\noindent
{\bf Case I.} If $p \not \in S_1$, then there exists $\pi^\prime$ congruent to $\pi$ mod $\wp$ and 
$\pi^\prime \not \simeq \pi$.

\smallskip

\noindent
{\bf Case II.} Suppose that $\lambda$ is parallel. If $ p \not \in S_2$, then there exists $\pi^\prime$ congruent to $\pi$ mod $\wp$ and $\pi^\prime \not \simeq {^\sigma} \pi$ for any $\sigma \in \Aut(\C)$.

\smallskip

\noindent
{\bf Case III.} Suppose that $\lambda$ is parallel. If $p \not \in S_3$, then there exists 
$\sigma \in \Aut (\C)$ with $\pi' = {^\sigma} \pi$ congruent to $\pi$ mod $\wp$ and $\pi^\prime \not \simeq \pi$.

\smallskip

A priori, we can only say that $\pi^\prime$ contributes to the inner cohomology. If we further assume that $\lambda$ is regular, then $\pi^\prime \in \mathrm{Coh} (G, \lambda, K_f)$.
\end{theorem}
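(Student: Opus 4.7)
The plan is to treat all three cases in parallel by invoking the discriminant/congruence machinery of \S\ref{lattice-cong-mod} combined with the identifications already carried out in the Case I, II, III subsections, and then to upgrade the conclusion ``$\pi'$ contributes to inner cohomology'' to ``$\pi'$ is cuspidal'' in the regular weight setting.

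First, for each of the three cases I would simply unwind the setup already in place: the vector spaces $V, \tilde V$, their Hecke-stable decompositions $V = V_1 \oplus V_2$ and $\tilde V = \tilde V_1 \oplus \tilde V_2$, the integral lattices $L, \tilde L$, and the cup-product pairing $L \times \tilde L \to \calo$. The hypothesis $p \notin S_i$ (with $i = 1,2,3$) is precisely what guarantees the pairing on lattices is perfect, so one is in the abstract situation of \S\ref{lattice-cong-mod}. The key computation, already carried out above case by case, is the chain
\[
v_\wp\!\bigl(\Lalg(1, \Ad^0, \pi, \varepsilon)\bigr) > 0 \ \Longrightarrow \ v_\wp\!\bigl(\mathrm{disc}(L_1 \otimes \tilde L_1)\bigr) > 0 \ \Longleftrightarrow \ \calc(L; V_1, V_2) \neq 0,
\]
whose first arrow uses equation (\ref{eqn:L-alg-value-pairing}), the product formula for the discriminant in the chosen bases $\{\vartheta^\circ_{b,\varepsilon}\}, \{\tilde\vartheta^\circ_{t,\tilde\varepsilon}\}$, and the orthogonality of distinct Hecke-isotypic components in the cup-product pairing.

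Second, once $\calc(L; V_1, V_2) \neq 0$, the implication (\ref{eqn:strong-congruence}) gives $Q(\calh^\circ; e_1, e_2) \neq 0$. At this point I would apply the Deligne--Serre lemma exactly as described after (\ref{eqn:strong-congruence}): after possibly enlarging $E$, it produces a character $\chi_2 : e_2 \calh^\circ \to \calo$ and a Hecke eigenclass $\Theta \in L_2$ with $\chi_2 \equiv \chi_1 \pmod{\wp}$. Any cuspidal automorphic representation $\pi'$ contributing to the Hecke summand of $\Theta$ in inner cohomology then satisfies $\pi' \equiv \pi \pmod{\wp}$, where the congruence is first read off at the level of the Hecke eigensystems and then translated into the Satake parameter formulation using the identity
\[
\sum_{i_1 < \cdots < i_j} \alpha_{\fr l, i_1} \cdots \alpha_{\fr l, i_j} \ = \ q_{\fr l}^{j(j-1)/2} \chi_1(T_{\fr l, j})
\]
recorded just before the theorem. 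By the very definition of $V_1$ in the three cases, the resulting $\pi'$ automatically satisfies: $\pi' \not\simeq \pi$ in Case I; $\pi' \not\simeq {}^\sigma\pi$ for every $\sigma \in \Aut(\C)$ in Case II; and $\pi' = {}^\sigma\pi \not\simeq \pi$ for some $\sigma$ in Case III.

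Finally, to promote ``contributes to inner cohomology'' to ``cuspidal'' under the regularity assumption on $\lambda$, I would invoke the vanishing of the residual/Eisenstein contributions to inner cohomology with regular coefficients (Borel, Li--Schwermer, Franke): when $\lambda$ is regular, inner cohomology coincides with cuspidal cohomology, so $\pi' \in \mathrm{Coh}(G, \lambda, K_f)$. The main subtlety, and the place where I would spend most care, is the bookkeeping around the excluded primes $S_2$ and $S_3$: in Case II one needs the inclusions $L(\pi) \subseteq L_1$ and $L(\tilde\pi) \subseteq \tilde L_1$ to be equalities modulo $\wp$, which is exactly what the extra primes in $S_2 \setminus S_1$ are introduced to ensure; and in Case III one must verify that the complement $V_2$ inside the already Galois-saturated $V$ genuinely supports a congruence distinct from those produced in Case II, which is the content of the identity (\ref{eqn:S3}) and the observation that Cases II and III never produce congruences at the same prime.
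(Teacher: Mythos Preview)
Your proposal is correct and follows essentially the same route as the paper: the proof is precisely the content of the three ``Case'' subsections preceding the theorem statement, which set up the abstract discriminant/congruence framework of \S\ref{lattice-cong-mod}, verify perfectness under $p \notin S_i$, compute $\mathrm{disc}(L_1 \otimes \tilde L_1)$ in the chosen bases, and then invoke (\ref{eqn:strong-congruence}) together with the Deligne--Serre lemma; the regularity upgrade is handled by the paper via Schwermer (Remark~\ref{rem:explanations}(1)), which is the same result you cite under the names Borel, Li--Schwermer, Franke. One small wording slip: when you first extract $\pi'$ you call it a ``cuspidal automorphic representation,'' but at that stage it is only known to contribute to inner cohomology---you correctly note this yourself in the final paragraph, so the logic is intact.
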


\medskip

\begin{remark}
\label{rem:explanations}
\begin{enumerate}
\item The last assertion concerning a consequence of regularity of $\lambda$ is well-known; 
see Schwermer~\cite{schwermer}. 
\smallskip

\item The obstruction to the converse is that the Hecke congruence module $Q(\calh^\circ; e_1, e_2)$ could be nonzero, while the cohomological congruence module $\calc(L; V_1, V_2)$ is zero, i.e., the converse of 
(\ref{eqn:strong-congruence}) need not hold. See also \cite{ghate-cong}. 

\smallskip

\item If $V=V_1$ in any of the above cases, then $L=L_1$ and by assumption the pairing $L_1 \otimes \tilde L_1$ is perfect. Then the algebraic $L$-value is, {\it a fortiori}, a $\wp$-adic unit and the theorem is vacuously true.
\end{enumerate}
\end{remark}

\medskip
\section{\bf The non-vanishing property}\label{nv-sec}

In this section, we prove Prop.\,\ref{prop:nv-hyp} concerning the non-vanishing of a quantity depending on $\pi_\infty.$ 
Since the non-vanishing is a local condition, we focus on a fixed infinite place. We note that for complex places, this result has already been proved in \cite[\S 5.2]{GHL}. We now consider only the real places.  
Our proof below, which is based on the methods of Sun~\cite{Sun}, can also be modified to give a proof at a complex place which would explicitly give the $K$-types considered in {\em loc.\,cit.}  It is also an interesting problem to consider this issue of nonvanishing from the perspective taken in Harder~\cite{harder-HC} of studying Harish-Chandra modules over $\Z.$ 

Throughout this section we fix once and for all a real place $v \in S_\infty$ and promptly drop it from our notations. 
Let $G=\GL_n(\R)$, $K=\GO(n)$ and $C=\Orth(n)$; and let $K^\circ$ and $C^\circ$ denote the connected component of 
$K$ and $C$ containing the identity. Let $\mu \in \Z^n$ be a dominant integral weight and let $M_\mu$ denote the 
irreducible representation of $G_\C = \GL_n(\C)$ of highest weight $\mu$. We consider irreducible Casselman--Wallach 
representations $\pi$ of $\GL_n(\R)$ such that $\pi$ is unitarizable and tempered and the relative Lie algebra cohomology
$$
H^* (\g, K^\circ; M_\mu \otimes \pi) \not = 0,
$$
where $\g$ is the complexified Lie algebra of $G$. We denote by $\Omega(\mu)$ the set of isomorphism classes of such representations.

Let $\tilde \mu = (-\mu_n, \dots, -\mu_1)$ be the dual weight and we set $ M_\xi = M_{\mu} \otimes M_{\tilde \mu}$. Since we are interested in weights $\mu$ that contribute to cuspidal cohomology, we assume from now on that $\mu$ is pure, i.e., $\tilde \mu = \mu - {\sf w}$ for some integer ${\sf w}$. Fix a nonzero element in the space
$$ \phi_M  \in \Hom_{G_\C} (M_\xi, \id) \not = 0.$$
Let $\pi_\mu \in \Omega(\mu)$ and $\pi_{\tilde \mu} \in \Omega (\tilde \mu)$ and denote by $\pi_\xi = \pi_{\mu} \hat \otimes \pi_{\tilde \mu}$ the completed tensor product of these two representations. We use the integrals $\varTheta $ to construct a nonzero element 
$$
\phi_\pi \in \Hom_G (\pi_\xi, \id),
$$
given by: 
$$
W \otimes \tilde W \ \mapsto \ c^\# (W, \tilde W) \ = \ \frac{\varTheta (W, \tilde W)}{L(1, \pi_\mu \times \pi_{\tilde \mu})}.
$$ 
This map is nonzero since, by \cite[Thm.\,8.5]{Cog}, there exist finitely many $W_i, \tilde W_i$ and $\Phi_i$ such that
$$ 0 \not = L(1, \pi \times \tilde \pi) = \sum_i \Psi_i (1, W_i, \tilde W_i, \Phi_i) = \sum_i \hat \Phi_i (0) \vartheta (W_i, \tilde W_i). $$ 
 In fact, up to multiplication by scalars this is the only nonzero $G$-equivariant map from $\pi_\xi \to \id$; 
see Lemma \ref{1-dim}.
The map $\phi_M \otimes \phi_\pi: M_\xi \otimes \pi_\xi \to \id$ induces the following map at the level of relative Lie algebra cohomology
$$
\Xi: H^{b + t} (\g \times \g, K^\circ \times K^\circ; M_\xi \otimes \pi_\xi ) \longrightarrow H^{b + t} (\g, K^\circ; \id).
$$

\begin{proposition}\label{prop-nv}
Let $\varepsilon$ be a character on $K/K^\circ$ and let $\tilde \varepsilon = \varepsilon_n \varepsilon$ be the dual character, where $\varepsilon_n = (-1)^{n-1}$. The map $\Xi$ restricts to a map on the following eigenspaces, 
$$H^{b + t} (\g \times \g, K^\circ \times K^\circ; M_\xi \otimes \pi_\xi ) (\varepsilon \otimes  \tilde \varepsilon) \longrightarrow H^{b + t} (\g, K^\circ; \id)(\varepsilon_n). $$
The restriction of the above map to the subspace 
$$H^{b} (\g, K^\circ; M_{\mu} \otimes \pi_{\mu} )( \varepsilon) \otimes H^{t} (\g, K^\circ; M_{\tilde \mu} \otimes \pi_{\tilde \mu} )(\tilde \varepsilon) $$
is a nonzero linear map. 
\end{proposition}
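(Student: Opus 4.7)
The proof falls into two parts: first, a bookkeeping verification that $\Xi$ respects the prescribed eigenspaces, and second, the substantive non-vanishing assertion on the Künneth summand. For the eigenspace assertion, note that both $\phi_M$ and $\phi_\pi$ are $G$-equivariant by construction: $\phi_M$ is the canonical $G_\C$-invariant pairing $M_\mu \otimes M_{\tilde\mu} \to \id$, and $\phi_\pi$ is a scalar multiple of the $G$-invariant functional $W \otimes \tilde W \mapsto \varTheta(W,\tilde W)/L(1,\pi_\mu\times\pi_{\tilde\mu})$. Their tensor is therefore invariant under the diagonal $K \subset K \times K$, so $\Xi$ factors through the diagonal-restriction map in relative Lie algebra cohomology. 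Restriction to the diagonal sends the $(\varepsilon\otimes\tilde\varepsilon)$-eigenspace for $K/K^\circ\times K/K^\circ$ to the $\varepsilon\cdot\tilde\varepsilon$ eigenspace for the diagonal $K/K^\circ$, and $\varepsilon\cdot\tilde\varepsilon = \varepsilon\cdot\varepsilon_n\varepsilon = \varepsilon_n$ since $\varepsilon^2 = 1$. In the top degree $b+t = \dim(\g/\k)$ the target $H^{b+t}(\g,K^\circ;\id)$ is one-dimensional, and $\delta_n$ acts on it through $\det(\Ad(\delta_n)|_{\g/\k}) = (-1)^{n-1} = \varepsilon_n$, exactly as in the compatibility argument of \S\ref{compatible-signatures}.

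For the non-vanishing, expand the extreme-degree classes in a basis as in the global set-up: $[\pi_\mu]^\varepsilon = \sum X^\vee_{\underline i}\otimes W_{\underline i,\alpha,\varepsilon}\otimes m_\alpha$ and $[\pi_{\tilde\mu}]^{\tilde\varepsilon} = \sum X^\vee_{\underline j}\otimes \tilde W_{\underline j,\beta,\tilde\varepsilon}\otimes \tilde m_\beta$, and identify the one-dimensional target $H^{b+t}(\g,K^\circ;\id)$ with $\C$ via a fixed top form. A direct computation then yields
\[
\Xi\!\left([\pi_\mu]^\varepsilon\otimes[\pi_{\tilde\mu}]^{\tilde\varepsilon}\right) \ = \ \sum_{\underline i,\underline j,\alpha,\beta} s(\underline i,\underline j)\,\langle m_\alpha,\tilde m_\beta\rangle\,\fr c^\#(W_{\underline i,\alpha,\varepsilon},\tilde W_{\underline j,\beta,\tilde\varepsilon}),
\]
which is precisely the local factor of $\fr c_\infty(\pi,\tilde\pi,\varepsilon,\tilde\varepsilon)$ at the fixed real place. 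Since $\Hom_G(\pi_\xi,\id)$ is one-dimensional by Lemma~\ref{1-dim}, and since the relevant extreme-degree relative Lie algebra cohomology groups are one-dimensional on the given eigenspace, proving non-vanishing reduces to exhibiting a single cohomological test pair for which the sum is non-zero.

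To produce such a pair I adapt Sun's construction \cite{Sun}. The tempered cohomological representation $\pi_\mu$ of $\GL_n(\R)$ has a distinguished minimal $K^\circ$-type $V_\tau \subset \pi_\mu$ occurring with multiplicity one, and by the Vogan--Zuckerman description the bottom-degree cohomology class is realized by an explicit Whittaker vector $W_\circ \in V_\tau \subset \calw(\pi_\mu,\psi)$; similarly one obtains $\tilde W_\circ \in \calw(\pi_{\tilde\mu},\psi^{-1})$ realizing the top-degree class. For this pair Sun's parabolic descent along the mirabolic subgroup unfolds the integral $\varTheta(W_\circ,\tilde W_\circ)$ into an iterated integral of positive type, whose non-vanishing is established inductively on $n$, the base case being a direct $\GL_1$- or $\GL_2$-computation. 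Since $L(1,\pi_\mu\times\pi_{\tilde\mu})$ is finite, one concludes $\fr c^\#(W_\circ,\tilde W_\circ)\neq 0$. The non-vanishing of the whole sum then follows by arranging $\phi_M$ so that the combinatorial coefficient $s(\underline i,\underline j)\langle m_\alpha,\tilde m_\beta\rangle$ matches the highest-weight/minimal-$K^\circ$-type component containing $W_\circ$, preventing cancellation.

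The hard step will be Sun's positivity argument for the archimedean period, which was originally developed for Rankin--Selberg integrals on $\GL_n\times\GL_{n-1}$ and must here be transplanted to our $\GL_n\times\GL_n$ setting specialized to $\pi_{\tilde\mu}=\tilde\pi_\mu$, with careful tracking of $K^\circ$-types, normalizations, and compatibility with the signs $\varepsilon,\tilde\varepsilon$ dictated by the first part. The concluding remark in the discussion after the statement—that $\fr p_\infty(\pi)$ is insensitive to the choice of permissible $\varepsilon$ when $n$ is even—follows because the two permissible characters are related by $\delta_n$-twisting, and this conjugation multiplies $\varTheta$ by $(-1)^{n-1}$, which is trivial for even $n$.
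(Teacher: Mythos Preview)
Your first paragraph on the eigenspace bookkeeping is fine and matches the paper's compatibility argument. The substantive non-vanishing, however, is not proved by the mechanism you describe, and your sketch contains a genuine gap.

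Sun's method, as adapted here, is \emph{not} a positivity argument, not a parabolic descent along the mirabolic, and does not proceed by evaluating $\varTheta(W_\circ,\tilde W_\circ)$ as an iterated integral of positive type. That description appears to conflate Sun's work with other archimedean techniques. The actual argument is purely representation-theoretic and hinges on multiplicity-one $K^\circ$-types and the Cartan/PRV calculus. Concretely: one singles out the $\SO(n)$-type $\tau_n$ of highest weight $[2\rho]-2\rho^c$, which occurs with multiplicity one in $\wedge^b(\g/\k)$; one identifies $\tau_\mu$ (multiplicity one in $M_\mu$) and the Cartan product $\tau_\mu^+ = \tau_\mu \cart \tau_n$ (multiplicity one in $\pi_\mu$); and one constructs $\phi_b$ as the composite $\wedge^b(\g/\k)\to\tau_n\to\widetilde\tau_\mu\otimes\tau_\mu^+\to M_\mu\otimes\pi_\mu$, with $\phi_t$ built dually. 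The non-vanishing of $\Xi(\phi_b\otimes\phi_t)$ is then read off a commutative diagram using three ingredients: the image of the diagonal $\wedge^{b+t}(\g/\k)\to\tau_n\otimes\widetilde\tau_n$ is the nonzero $C^\circ$-invariant line; any nonzero $C^\circ\times C^\circ$-invariant functional on $\widetilde\sigma_1\otimes\sigma_3$ is nonzero on the PRV component (here $\tau_n\otimes\widetilde\tau_n = \widetilde\tau_\xi\prv\tau_\xi^+$); and any nonzero $G$-invariant functional on $M_\xi$ (resp.\ $\pi_\xi$) is nonzero on the multiplicity-one type $\widetilde\tau_\xi$ (resp.\ $\tau_\xi^+$). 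None of this involves unfolding an integral or any inductive positivity. Your proposal to ``arrange $\phi_M$ so that the combinatorial coefficient matches the minimal-$K^\circ$-type component, preventing cancellation'' is not a substitute for this chain of multiplicity-one and PRV arguments.

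Your final remark on $\varepsilon$-independence also contains an error: for even $n$ one has $(-1)^{n-1}=-1$, not $1$, so the claim that the $\delta_n$-twist is trivial is false. The paper's argument is different: one shows the cross terms $\Xi(\delta\cdot\phi_b\otimes\phi_t)$ and $\Xi(\phi_b\otimes\delta\cdot\phi_t)$ vanish by $K$-type incompatibility (since $\delta\cdot\tau_n\not\cong\tau_n$ when $n$ is even), and then $\Xi(\phi_{b,\varepsilon}\otimes\phi_{t,-\varepsilon}) = 2\,\Xi(\phi_b\otimes\phi_t)$ independently of $\varepsilon$.
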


The non-vanishing hypothesis is equivalent to the statement of this proposition. Since all the relative Lie algebra complexes involved here are split, the linear map we are interested in is
\begin{equation*}
\begin{split}
\Hom_{K^\circ} &(\wedge^{b} \g/\k, M_{\mu} \otimes \pi_{\mu})(\varepsilon) \otimes \Hom_{K^\circ} (\wedge^{t} \g/\k, M_{\tilde \mu} \otimes \pi_{\tilde \mu})(\tilde \varepsilon) \\
			&\longrightarrow \Hom_{K^\circ \times K^\circ}  (\wedge^{b+t} \g \times \g/\k \times \k, M_\xi \otimes \pi_\xi)(\tilde \varepsilon \otimes \varepsilon)\longrightarrow \Hom_{K^\circ} ( \wedge^{b+t} \g/\k, \id)(\varepsilon_n).
\end{split}
\end{equation*}
Let $\phi_b \in \Hom_{K^\circ} (\wedge^{b} \g/\k, M_{\mu} \otimes \pi_{\mu})(\varepsilon)$ and $\phi_t \in  \Hom_{K^\circ} (\wedge^{t} \g/\k, M_{\tilde \mu} \otimes \pi_{\tilde \mu})(\tilde \varepsilon)$, the image of $\phi_b \otimes \phi_t$ is the composition of all the maps in the first row of the following diagram:
$$\xymatrix@1{
\wedge^{b+t} (\g/\k) \ar[r]^-{\Delta} & \wedge^{b+t} (\g \times \g/\k \times \k) \ar[rr]^-{(\phi_b \otimes \phi_t)^*} & & (M_{\mu} \otimes \pi_{\mu}) \hat \otimes (M_{\tilde \mu} \otimes \pi_{\tilde \mu}) \ar[r]^-{\sim} & M_\xi \otimes \pi_\xi \ar[r]^-{\phi_M \otimes \phi_\pi} & \id \\
& \wedge^b (\g/\k) \otimes \wedge^t (\g/\k) \ar[u] \ar[urr]_-{\phi_b \otimes \phi_t}  & & & & \\
}$$
where $(\phi_b \otimes \phi_t)^*$ extends the map $\phi_b \otimes \phi_t$ by zero.  

Let $\fr t=\fr t_n$ and $\fr b=\fr b_n$ be the Cartan subalgebra and Borel subalgebra for $\g$ considered in \cite{Sun},
 and let $\fr n = \fr n_n$ denote the nilradical of $\fr b$. These subalgebras are chosen in a way that $\fr t^c = \fr t \cap \fr o$ and $\fr b^c = \fr b \cap \fr o$ are the Cartan subalgebra and Borel subalgebra, respectively, for the orthogonal Lie algebra $\fr o$. Let $\fr n^c$ denote the nilradical of $\fr b^c$ and again $\fr n^c = \fr n \cap \fr o$. Let $z_1, z_2 \in \C$, we define a $2\times 2$ complex matrix by the formula
$$
\gamma (z_1, z_2) = \left(\begin{array}{cc} \frac{z_1+z_2}{2} & \frac{z_1 - z_2}{2i} \\ \frac{z_2 - z_1}{2i} & \frac{z_1+z_2}{2} \end{array} \right).
$$ 
Then the Cartan subalgebra $\fr t$ may be identified with $\C^n$, where, if $n=2k$, then the isomorphism is given by 
$$
(y_1, \dots, y_{2k}) \mapsto \text{Diag}(\gamma (y_1,y_2), \dots, \gamma (y_{2k-1}, y_{2k})),
$$
and, when $n=2k+1$, the isomorphism is given by
$$
(y_1, \dots, y_{2k+1}) \mapsto \text{Diag}(y_1, \gamma (y_2,y_3), \dots, \gamma (y_{2k}, y_{2k+1})).
$$
The compact Cartan subalgebra $\fr t^c$ corresponds to the subspace $(-x_1, x_1, \dots, -x_k, x_k)$ when $n$ is even and the subspace $(0, -x_1, x_1, \dots, -x_k, x_k)$ when $n$ is odd. Let $e_i$ denote the character ${\fr t^c}^*$ that sends $(-x_1, x_1, \dots, -x_k, x_k) \mapsto x_i$ when $n$ is even and $(0, -x_1, x_1, \dots, -x_k, x_k) \mapsto x_i$ when $n$ is odd. The $e_i$ form a basis for ${\fr t^c}^*$ and we represent any weight $\lambda \in \tcstar$ by its coefficients $(\lambda_1, \dots, \lambda_k) \in \Z^k$ with respect to this basis. We fix our root system 
$$\Delta_n = \left\{ \begin{array}{cl}
\{\pm e_i \pm e_j \}_{1 \le i < j \le k} & \text{if $n$ is even, and}\\
\{\pm e_i \pm e_j \}_{1 \le i < j \le k} \cup \{ \pm e_i \}_{=1}^{k} & \text{otherwise.}\\
\end{array}\right.
$$
We also fix a set of positive roots
$$\Delta_n^+ = \left\{ \begin{array}{cl}
\{\pm e_i + e_j \}_{1 \le i < j \le k} & \text{if $n$ is even, and}\\
\{\pm e_i + e_j \}_{1 \le i < j \le k} \cup \{e_i \}_{=1}^{k} & \text{otherwise.}\\
\end{array}\right.
$$
Suppose that $\lambda \in \tcstar$ is a dominant weight, we let $\tau_\lambda$ denote the irreducible representation of $\SO(n)$ with highest weight $\lambda$. Fix an element $\delta =\delta_n \in \Orth (n) \setminus \SO(n)$ throughout this section. We take $\delta = \mathrm{Diag}(-1, 1, \dots, 1)$ when $n$ is odd, and 
$$ \delta = \left( \begin{array}{ccccc}
0 & 1 & \\
1 & 0 & \\
& & 1 & \\
& & & \ddots & \\
& & & & 1\\
\end{array} \right)$$
when $n$ is even. This is different from the choice made in \S\ref{compatible-signatures}, but this choice is better for our computations here.
\begin{lemma}\label{lemma-self-dual}
If $n \not \equiv 2 \mod 4$, all the representations $\tau_\lambda$ are self-dual. If $n \equiv 2 \mod 4$, the dual representation $\widetilde \tau_\lambda = \delta \cdot \tau_{\lambda}$. In particular, $\tau_\lambda$ is self-dual if and only if $\lambda_1 =0$.
\end{lemma}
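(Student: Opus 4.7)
The proof reduces to standard facts about $\SO(n)$ once the sign conventions are pinned down. For any irreducible representation $\tau_\lambda$ of a compact connected Lie group, the dual $\widetilde\tau_\lambda$ has highest weight $-w_0\lambda$, where $w_0$ is the longest Weyl element. My first step is to compute $w_0$ case by case: the Weyl group of $\SO(2k+1)$ is of type $B_k$, which always contains $-\mathrm{id}$, so $w_0 = -\mathrm{id}$; the Weyl group of $\SO(2k)$ is of type $D_k$, and $-\mathrm{id}$ (which acts with $k$ sign changes) lies in $W(D_k)$ if and only if $k$ is even, i.e.\ $n \equiv 0 \pmod 4$. So for $n$ odd and for $n \equiv 0 \pmod 4$ one has $-w_0\lambda = \lambda$ automatically, yielding the first assertion.

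When $n = 2k$ with $k$ odd, $w_0$ equals $-\mathrm{id}$ composed with the unique nontrivial diagram automorphism $\sigma$ of the Dynkin diagram $D_k$. With the paper's choice of positive system $\Delta_n^+ = \{e_j \pm e_i : i < j\}$, the simple roots are $e_2 - e_1, e_3 - e_2, \ldots, e_k - e_{k-1}$ together with $e_1 + e_2$, and $\sigma$ swaps the two ``fork'' simple roots $e_2 - e_1$ and $e_1 + e_2$, which on weights is the sign flip at the first coordinate. Hence $-w_0\lambda = (-\lambda_1, \lambda_2, \ldots, \lambda_k)$, which is already dominant since $\lambda$ is. The self-duality criterion $\widetilde\tau_\lambda \cong \tau_\lambda$ therefore reduces to $\lambda_1 = 0$, as claimed.

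Next I would verify that conjugation by the chosen $\delta \in \Orth(n) \setminus \SO(n)$ induces precisely this diagram automorphism. With $\delta$ equal to the $2 \times 2$ swap in the first block together with the identity elsewhere, a direct calculation using the embedding of $\fr t^c$ into $\fr t$ via the blocks $\gamma(-x_j, x_j)$ shows that $\Ad(\delta)$ sends $(x_1, x_2, \ldots, x_k)$ to $(-x_1, x_2, \ldots, x_k)$; the induced map on $\tcstar$ flips $e_1$ and fixes the remaining $e_i$. Consequently the highest weight of $\delta \cdot \tau_\lambda$ is the dominant representative of $(-\lambda_1, \lambda_2, \ldots, \lambda_k)$, which equals $-w_0\lambda$, so $\delta \cdot \tau_\lambda \cong \widetilde\tau_\lambda$.

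The main care point is keeping the outer automorphism straight: since the paper uses an unconventional ordering of the positive system (placing the ``special'' coordinate at index $1$ rather than at index $k$), the $D_k$ diagram automorphism here acts on the first coordinate rather than the last, and the chosen $\delta$ must be matched with the correct simple-root swap. Once this sign bookkeeping is settled, the rest is a routine check in the highest-weight theory of the classical groups.
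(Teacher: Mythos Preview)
Your argument is correct and follows essentially the same route as the paper's proof: both compute the highest weight of $\widetilde\tau_\lambda$ as $-w_0\lambda$, use that $w_0=-1$ precisely when $n\not\equiv 2\pmod 4$, and for $n\equiv 2\pmod 4$ identify the action of $-w_0$ with the sign flip $(\lambda_1,\ldots,\lambda_k)\mapsto(-\lambda_1,\lambda_2,\ldots,\lambda_k)$, then check this matches the effect of $\delta$. Your version simply fills in more of the bookkeeping (the Dynkin-diagram description of $w_0$ and the explicit $\Ad(\delta)$ computation) that the paper leaves as a ``simple calculation.''
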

\begin{proof}
We will show that $\widetilde \tau_\lambda \cong \tau_\lambda$ by comparing the highest weights of both representations. The highest weight of $\widetilde \tau_\lambda$ is $-w_{\SO(n)} \lambda$ where $w_{\SO(n)}$ is the longest element in the Weyl group of $\SO(n)$. The lemma follows because $w_{\SO(n)} = -1$ if $n \not \equiv 2 \mod 4$.

If $n \equiv 2 \mod 4$, then $-w_{\SO(n)}$ sends the dominant weight $\lambda=(\lambda_1, \dots, \lambda_k)$ to the dominant weight $ (-\lambda_1, \lambda_2, \dots, \lambda_k)$. One verifies by a simple calculation that the highest weight of $\delta \cdot \tau_\lambda$ is $\delta \cdot \lambda :=  (-\lambda_1, \lambda_2, \dots, \lambda_k)$. This proves the second part of the lemma.
\end{proof}

We consider $\mu$ as a weight for $\fr t$ and let $|\mu|$ be the dominant weight in its Weyl orbit. Let $[|\mu|]$ denote the 
dominant weight corresponding to the 
restriction of $\mu$ to $\fr t^c$. We denote by $\tau_\mu$ the irreducible representation of $\SO(n)$ whose highest weight is $[|\mu|]$. We remark here that since $\mu$ is pure, i.e., $\tilde \mu = \mu - {\sf w}$, we get 
that $[|\tilde \mu|]  = [|\mu|]$. Hence $\tau_{\tilde \mu} = \tau_\mu$.
We also know from \cite{Sun} 
that $\tau_\mu$ is contained in $M_\mu$ with multiplicity one and contains the one dimensional subspace $(M_\mu)^{\fr n}$. Let $\tau_\xi =  \tau_{\mu} \otimes \widetilde \tau_{\mu}$ and we have the following lemma.

\begin{lemma}
The representation $\widetilde \tau_\xi$ occurs with multiplicity one in $M_\xi$. Moreover, every nonzero element of $\Hom_G (M_\xi, \id)$ does not vanish on $\widetilde \tau_\xi$.
\end{lemma}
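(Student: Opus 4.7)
The plan is to separate the two assertions. For the multiplicity statement, I would first observe that since $\mu$ is pure we have $[|\tilde\mu|]=[|\mu|]$, so $\tau_{\tilde\mu}=\tau_\mu$. By Sun's $K$-type result already quoted, $\tau_\mu\subset M_\mu$ with multiplicity one, and likewise $\tau_\mu\subset M_{\tilde\mu}$ with multiplicity one. Hence I need only show that the dual $\widetilde\tau_\mu$ appears in $M_\mu$ with multiplicity one. If $n\not\equiv 2\pmod 4$, or if $n\equiv 2\pmod 4$ but $\lambda_1=0$, then by Lemma \ref{lemma-self-dual} we have $\widetilde\tau_\mu=\tau_\mu$ and there is nothing to prove. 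In the remaining case, Lemma \ref{lemma-self-dual} gives $\widetilde\tau_\mu=\delta\cdot\tau_\mu$; since $\delta\in\Orth(n)$ normalizes $\SO(n)$ and acts on $M_\mu$, translation by $\delta$ carries the $\tau_\mu$-isotypic subspace bijectively onto the $\widetilde\tau_\mu$-isotypic subspace, so the multiplicities agree. By the K\"unneth principle for $\SO(n)\times\SO(n)$-types, the multiplicity of $\widetilde\tau_\xi=\widetilde\tau_\mu\otimes\tau_\mu$ in $M_\xi=M_\mu\otimes M_{\tilde\mu}$ is the product $1\cdot 1=1$.

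For the non-vanishing, I would exploit the identification $M_\xi=M_\mu\otimes M_{\tilde\mu}\cong M_\mu\otimes M_\mu^*\cong\End(M_\mu)$, together with the fact that, by Schur, $\Hom_{G_\C}(\End(M_\mu),\id)$ is one-dimensional and spanned by the trace map. Hence $\phi_M$ is a nonzero scalar multiple of $\mathrm{tr}$. Decompose $M_\mu=\bigoplus_\sigma M_\mu[\sigma]$ into $\SO(n)$-isotypic components and write $\mathrm{Id}_{M_\mu}=\sum_\sigma e_\sigma$, where $e_\sigma$ is the orthogonal projector onto $M_\mu[\sigma]$. Each $e_\sigma$ lies in $M_\mu[\sigma]\otimes(M_\mu^*)[\widetilde\sigma]\subset M_\xi$, and by the first paragraph $e_{\widetilde\tau_\mu}$ is a nonzero element of the $(\widetilde\tau_\mu\otimes\tau_\mu)$-isotypic component of $M_\xi$, which is precisely the unique copy of $\widetilde\tau_\xi$. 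Taking traces yields
\[
\phi_M(e_{\widetilde\tau_\mu})\ =\ c\cdot\mathrm{tr}(e_{\widetilde\tau_\mu})\ =\ c\cdot\dim M_\mu[\widetilde\tau_\mu]\ =\ c\cdot\dim\tau_\mu\ \neq\ 0,
\]
so $\phi_M|_{\widetilde\tau_\xi}\not\equiv 0$. Since every nonzero element of $\Hom_G(M_\xi,\id)$ is a nonzero multiple of $\phi_M$, the assertion follows for any such element.

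I expect the genuinely nontrivial step to be the multiplicity one statement for $\widetilde\tau_\mu\subset M_\mu$ in the case $n\equiv 2\pmod 4$ with $\lambda_1\neq 0$, because there Sun's theorem does not directly give it and one must invoke the twist by $\delta$; the non-vanishing then drops out almost formally from the trace description of the unique $G$-invariant functional. The same strategy explains why there is no need to involve concrete highest-weight calculations, which would be the alternative (and less clean) path.
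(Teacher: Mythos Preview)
Your proof is correct and follows essentially the same route as the paper. For multiplicity one, both you and the paper reduce to the known fact that $\tau_\mu$ sits in $M_\mu$ with multiplicity one and then invoke the $\delta$-twist (Lemma~\ref{lemma-self-dual}) to pass between $\tau_\mu$ and $\widetilde\tau_\mu$. For the non-vanishing, the paper identifies $\Hom_G(M_\xi,\id)\cong\Hom_G(M_\mu,M_\mu)$ by Schur and argues that the restriction map $\Hom_G(M_\mu,M_\mu)\to\Hom_{\SO(n)}(\tau_\mu,\tau_\mu)$ is an isomorphism of one-dimensional spaces, hence non-vanishing on $\tau_\xi$, and then invokes $C$-equivariance to reach $\widetilde\tau_\xi$; your version unwinds this to the explicit statement that the trace of the projector $e_{\widetilde\tau_\mu}$ equals $\dim\tau_\mu\neq 0$. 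These are two phrasings of the same observation, with your trace formulation perhaps slightly more transparent and avoiding the separate $C$-equivariance step.
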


\begin{proof}
The statement about multiplicity one follows from the corresponding statement for $\mu$ and that $\widetilde M_\mu = M_{\tilde \mu}$ and that $\widetilde \tau_\xi$ equals $\tau_\xi$ or $\delta \cdot \tau_\xi$.  The space $\Hom_G (M_\xi, \id)$ can be canonically identified with $\Hom_G (M_\mu, M_{\mu})$ which is one dimensional by Schur's lemma. Multiplicity one shows that the restriction map is an isomorphism $\Hom_G ( M_\mu, M_{ \mu}) \to \Hom_{\SO(n)} ( \tau_\mu, \tau_{ \mu})$. This proves that any such nonzero linear functional does not vanish on $\tau_\xi$. The second part of the lemma follows from $C$-equivariance. 
\end{proof}

Let $2\rho \in \fr t^*$ denote the sum of all the positive roots for $\fr n$ and let $2\rho^c \in {\fr t^c}^*$ denote the sum of all the positive roots for $\fr n^c$. Denote by $\tau_n$ the irreducible representation of $\SO(n)$ of highest weight $[2\rho]-2\rho^c \in {\fr t^c}^*$. We know that the representation $\tau_n$ occurs with multiplicity one in 
$\wedge^b(\g/\k)$ and by duality $\widetilde \tau_n$ occurs with multiplicity one in $\wedge^t (\g/\k)$. We remark here that when $n=2k$, $\tau_n$ has highest weight $(\pm 2, 4, 6, \dots, 2k)$, where the sign depends on the parity of $k$, and this shows that $\delta \cdot \tau_n \not \cong \tau_n$.

\begin{lemma}
The composition of the maps
$$
\wedge^{b+t} (\g/\k) \ \stackrel{\Delta}{\longrightarrow} \ 
\wedge^{b+t} \left((\g \times \g)/(\k \times \k) \right)  \ \longrightarrow \ \wedge^b (\g/\k) \otimes \wedge^t (\g/\k)
\ \longrightarrow \ \tau_n \otimes \widetilde \tau_n
$$
is nonzero. Here the last two arrows are projection maps onto subspaces. The image of the composition of these three maps is the one dimensional subspace $(\tau_n \otimes \widetilde \tau_n)^{C^\circ}$.
\end{lemma}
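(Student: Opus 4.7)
The strategy is to identify the composition of the first two arrows as the canonical element corresponding to the identity endomorphism of $\wedge^b(\g/\k)$, and then read off its projection to the $\tau_n \otimes \widetilde{\tau}_n$ component. First I would observe that $b + t = \mathrm{dim}_\C(\g/\k)$; this is forced by Poincar\'e duality in relative Lie algebra cohomology, which makes the cohomological range $[b,t]$ symmetric about the middle degree. Consequently $\wedge^{b+t}(\g/\k)$ is one-dimensional, and since the adjoint action of the connected group $K^\circ$ on $\g/\k$ factors through $\SO(n)$ times a central $\R_{>0}$, the top exterior power is in fact the trivial character of $K^\circ$. The composition being $K^\circ$-equivariant, its image therefore lies in $(\tau_n \otimes \widetilde{\tau}_n)^{K^\circ}$; Schur's lemma applied to the irreducible $\SO(n)$-module $\tau_n$ shows this invariant space equals $\mathrm{Hom}_{K^\circ}(\tau_n, \tau_n) \cong \C$, and it coincides with $(\tau_n \otimes \widetilde{\tau}_n)^{C^\circ}$ because the extra connected factor of $K^\circ$ is central and acts trivially on the tensor product. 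The second assertion of the lemma thus reduces to showing the composition is nonzero.

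Second, I would compute the first two arrows explicitly. Fix a basis $v_1, \dots, v_N$ of $\g/\k$ with $N = b+t$ and set $\omega = v_1 \wedge \cdots \wedge v_N$. Writing $(v_i, v_i) = (v_i, 0) + (0, v_i)$ and expanding the wedge product in $\wedge^N((\g/\k) \oplus (\g/\k))$, followed by projection onto the $(b,t)$-summand, yields
$$
\omega \ \longmapsto \ \sum_{|I| = b} \epsilon(I, I^c)\ v_I \otimes v_{I^c} \ \in \ \wedge^b(\g/\k) \otimes \wedge^t(\g/\k),
$$
where $v_I = v_{i_1} \wedge \cdots \wedge v_{i_b}$ for $I = \{i_1 < \cdots < i_b\}$ and $\epsilon(I, I^c)$ is the sign of the shuffle carrying $(I, I^c)$ to $(1, \dots, N)$. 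On the other hand, wedge product is a non-degenerate $K^\circ$-equivariant pairing $\wedge^b(\g/\k) \otimes \wedge^t(\g/\k) \to \wedge^{b+t}(\g/\k) = \C\omega$, under which $\{v_I\}_{|I|=b}$ and $\{\epsilon(I, I^c) v_{I^c}\}_{|I|=b}$ are dual bases (after normalizing $\omega = 1$). The induced $K^\circ$-equivariant isomorphism $\wedge^b(\g/\k) \otimes \wedge^t(\g/\k) \cong \mathrm{End}(\wedge^b(\g/\k))$ then carries the displayed element to $\mathrm{id}_{\wedge^b(\g/\k)}$, since the formula above is precisely the standard expression for the identity in dual bases.

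Finally, I would carry out the projection to $\tau_n \otimes \widetilde{\tau}_n$. Decompose $\wedge^b(\g/\k) = \tau_n \oplus U$ as a $K^\circ$-module with $U$ the sum of the remaining isotypic components; the multiplicity-one statements for $\tau_n \subset \wedge^b(\g/\k)$ and $\widetilde{\tau}_n \subset \wedge^t(\g/\k)$ make this canonical. Under the isomorphism $\wedge^b(\g/\k) \otimes \wedge^t(\g/\k) \cong \mathrm{End}(\wedge^b(\g/\k))$, the subspace $\tau_n \otimes \widetilde{\tau}_n$ corresponds to the block $\mathrm{End}(\tau_n) \subset \mathrm{End}(\wedge^b(\g/\k))$, and the projection onto this block sends $\mathrm{id}_{\wedge^b(\g/\k)} = \mathrm{id}_{\tau_n} + \mathrm{id}_U$ to the nonzero element $\mathrm{id}_{\tau_n}$. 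This forces the full composition to be nonzero, and combined with the first paragraph gives both claims. The main subtlety to handle carefully is that the copy of $\widetilde{\tau}_n \subset \wedge^t(\g/\k)$ really corresponds, under $\wedge^t(\g/\k) \cong (\wedge^b(\g/\k))^*$, to the copy of $\tau_n^*$ dual to $\tau_n \subset \wedge^b(\g/\k)$ rather than sitting in the annihilator of $\tau_n$; but this is forced by multiplicity one together with the $K^\circ$-equivariance of the wedge pairing.
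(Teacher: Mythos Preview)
Your proposal is correct and follows essentially the same approach as the paper's own proof. Both arguments compute the image of a top wedge generator in $\wedge^b(\g/\k)\otimes\wedge^t(\g/\k)$ as $\sum_i v_i\otimes v_i^*$ in a basis/dual-basis expansion, then observe that projecting onto $\tau_n\otimes\widetilde{\tau}_n$ picks out the nonzero partial sum over a basis of $\tau_n$; your repackaging of this element as $\mathrm{id}_{\wedge^b(\g/\k)}\in\End(\wedge^b(\g/\k))$, with the projection becoming $\mathrm{id}_{\tau_n}$, is the same computation in slightly more conceptual language, and your explicit treatment of the multiplicity-one subtlety for $\widetilde{\tau}_n\subset\wedge^t(\g/\k)$ matches what the paper uses implicitly.
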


\begin{proof}
Let $d=b+t$ and let $X_1, X_2, \dots, X_d$ be a basis for $\g/\k$. The vector $X_1 \wedge X_2 \wedge \cdots \wedge X_d$ is a basis for the one dimensional space $\wedge^{b+t} (\g/\k)$. Its image under the diagonal map is $(X_1,X_1) \wedge (X_2,X_2) \wedge \cdots \wedge (X_d, X_d)$ whose image under the projection map to $\wedge^b(\g/\k) \otimes \wedge^t (\g/\k)$ is 
$$ 
(X_1,X_1) \wedge (X_2,X_2) \wedge \cdots \wedge (X_d, X_d) \mapsto \sum_{\ui =  i_1<i_2 < \cdots < i_b} (X_{i_1} \wedge \cdots \wedge X_{i_b}) \otimes (X_{j_1} \wedge \cdots \wedge X_{j_t}), 
$$
where $\uj = j_1 < j_2 < \cdots <j_t$ is the complement of $\ui$. It is clear that this is a nonzero vector. We now show that this vector projects non-trivially onto the subspace $\tau_n \otimes \widetilde \tau_n$. Let $r=\dim(\tau_n)$ and let 
$k=\dim(\wedge^b (\g/\k))$ and let $v_1, v_2, \dots, v_k $ be a basis for $\wedge^b (\g/\k)$ such that $v_1, \dots, v_r$ is a basis for $\tau_n$. Let $v_1^*, \dots, v_k^*$ be the dual basis for $\wedge^t (\g/\k)$, then $v_1^*,\dots, v_r^*$ is a basis for $\widetilde \tau_n$. We see that up to a nonzero scalar the expression on the right hand side of the above equation is $\sum_{i=1}^k v_i \otimes v_i^*$. Clearly, the projection of this vector onto $\tau_n \otimes \widetilde \tau_n$ is $\sum_{i=1}^r v_i \otimes v_i^*$ which is nonzero. The last statement of the lemma follows from $C^\circ$ equivariance of all the maps.
\end{proof}

Let $\sigma_1$ and $\sigma_2$ be two irreducible representations of $C^\circ \times C^\circ$. Let $\sigma_3 = \sigma_1 \cart \sigma_2$ be the Cartan product of these two representations, i.e., the highest weight of $\sigma_3$ is the sum of the highest weights of $\sigma_1$ and $\sigma_2$. Then $\sigma_2 = \tilde \sigma_1 \prv \sigma_3$, the PRV component of these representations. The PRV product is an irreducible representation whose highest weight is the sum of the highest weight of one representation and the lowest weight of the other. (Here PRV is an acronym for Parthasarathy, Ranga Rao and Varadarajan.)

\begin{proposition}
Every $C^\circ \times C^\circ$-invariant linear functional on $\tilde \sigma_1 \otimes \sigma_3$ is nonzero on the PRV component $\sigma_2$.
\end{proposition}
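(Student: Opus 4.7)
My plan is to deduce the proposition from a canonical realization of the PRV component together with a contraction map whose non-vanishing is transparent from the construction.

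First, I would realize the Cartan product $\sigma_3 = \sigma_1 \cart \sigma_2$ as the cyclic subrepresentation of $\sigma_1 \otimes \sigma_2$ generated by the highest-weight tensor $v_1^+ \otimes v_2^+$, where $v_i^+$ denotes a highest-weight vector of $\sigma_i$.  This yields a canonical inclusion $\sigma_3 \hookrightarrow \sigma_1 \otimes \sigma_2$, and hence
$$
\iota \colon \tilde\sigma_1 \otimes \sigma_3 \;\hookrightarrow\; \tilde\sigma_1 \otimes \sigma_1 \otimes \sigma_2.
$$
Composing with the duality pairing $\mathrm{ev} \colon \tilde\sigma_1 \otimes \sigma_1 \to \C$ produces a $C^\circ \times C^\circ$-equivariant map
$$
f \;=\; (\mathrm{ev} \otimes \mathrm{id}) \circ \iota \;\colon\; \tilde\sigma_1 \otimes \sigma_3 \longrightarrow \sigma_2.
$$
By the PRV theorem, $\sigma_2$ occurs in $\tilde\sigma_1 \otimes \sigma_3$ with multiplicity one, so $\Hom_{C^\circ \times C^\circ}(\tilde\sigma_1 \otimes \sigma_3, \sigma_2)$ is at most one-dimensional.

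Second, I would verify $f \neq 0$ by an explicit evaluation on weight vectors.  Let $v_{\tilde\sigma_1}^-$ be a lowest-weight vector of $\tilde\sigma_1$; its weight is $-\mathrm{hw}(\sigma_1)$, and it is dual under $\mathrm{ev}$ to the highest-weight line of $\sigma_1$.  Then
$$
f\bigl(v_{\tilde\sigma_1}^- \otimes (v_1^+ \otimes v_2^+)\bigr) \;=\; \langle v_{\tilde\sigma_1}^-,\, v_1^+\rangle \, v_2^+ \;\neq\; 0.
$$
As in the standard proof of the PRV theorem, the vector $v_{\tilde\sigma_1}^- \otimes v_{\sigma_3}^+$, with $v_{\sigma_3}^+ = v_1^+ \otimes v_2^+$, has weight $\mathrm{hw}(\sigma_2)$ and generates a cyclic submodule of $\tilde\sigma_1 \otimes \sigma_3$ whose unique irreducible quotient is the PRV-copy of $\sigma_2$.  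The computation above shows that $f$ does not annihilate this generator, so by Schur's lemma the restriction of $f$ to the PRV submodule is an isomorphism onto $\sigma_2$.

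Finally, any $C^\circ \times C^\circ$-invariant linear functional on $\tilde\sigma_1 \otimes \sigma_3$ (viewed as a nonzero element of the $\sigma_2$-equivariant $\Hom$-space, equivalently as the projection onto the $\sigma_2$-isotypic part) must be a nonzero scalar multiple of $f$, and hence is nonzero on the PRV component.  The principal obstacle is step two: identifying the PRV-copy of $\sigma_2$ as precisely the irreducible top of the cyclic module generated by $v_{\tilde\sigma_1}^- \otimes v_{\sigma_3}^+$.  This requires combining the Cartan realization of $\sigma_3 \subset \sigma_1 \otimes \sigma_2$ with a weight-dominance argument ensuring that $\mathrm{hw}(\sigma_2)$ is the maximal weight reachable from this vector, and then invoking the non-degeneracy of $\mathrm{ev}$ on the extremal weight lines of $\sigma_1$ and $\tilde\sigma_1$ to conclude that the cyclic vector is not annihilated by $f$.
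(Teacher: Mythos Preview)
Your approach is essentially Sun's own argument for the single-factor case, carried out directly for $C^\circ\times C^\circ$, whereas the paper proceeds by a one-line factorization: writing $\sigma_i=\alpha_i\otimes\beta_i$, any $C^\circ\times C^\circ$-invariant functional on $\tilde\sigma_1\otimes\sigma_3\cong(\tilde\alpha_1\otimes\alpha_3)\otimes(\tilde\beta_1\otimes\beta_3)$ splits as $\phi_1\otimes\phi_2$, and then one invokes the proof of Sun's Prop.~2.16 separately on each tensor factor to get nonvanishing on $\alpha_2$ and on $\beta_2$, hence on $\sigma_2=\alpha_2\otimes\beta_2$. So the paper \emph{cites} Sun, while you are \emph{reproducing} his mechanism (Cartan-realize $\sigma_3\hookrightarrow\sigma_1\otimes\sigma_2$, contract against $\tilde\sigma_1$, evaluate on the extremal vector $v^-_{\tilde\sigma_1}\otimes v^+_{\sigma_3}$). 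Both routes are valid; the paper's is shorter given Sun as a black box, yours is self-contained.

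One point to tighten: the multiplicity-one statement you need, namely $\dim\Hom_{C^\circ\times C^\circ}(\tilde\sigma_1\otimes\sigma_3,\sigma_2)=1$, is \emph{not} the PRV theorem (which only gives existence). It follows instead from Frobenius reciprocity
\[
\Hom(\tilde\sigma_1\otimes\sigma_3,\sigma_2)\;\cong\;\Hom(\sigma_3,\sigma_1\otimes\sigma_2),
\]
and the right-hand side is one-dimensional precisely because $\sigma_3=\sigma_1\cart\sigma_2$ is the Cartan component, which always has multiplicity one. With this in hand your $f$ is (up to scalar) the unique intertwiner, and your evaluation $f(v^-_{\tilde\sigma_1}\otimes v^+_{\sigma_3})=\langle v^-_{\tilde\sigma_1},v_1^+\rangle\,v_2^+\neq 0$ finishes the proof cleanly, without needing the cyclic-module/irreducible-quotient discussion you flag as the ``principal obstacle''. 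Also, be careful in the final paragraph: a $C^\circ\times C^\circ$-invariant linear functional is literally a map to $\C$, not to $\sigma_2$; the identification you are using is really $\Hom(\tilde\sigma_1\otimes\sigma_3,\sigma_2)\cong(\sigma_1\otimes\tilde\sigma_3\otimes\sigma_2)^{C^\circ\times C^\circ}$, and the statement should be read in that sense.
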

\begin{proof}
We write each of the representations $\sigma_i = \alpha_i \otimes \beta_i$. Any nonzero functional on $\tilde \sigma_1 \otimes \sigma_3$ is of the form $\phi_1 \otimes \phi_2$ where $\phi_1$ is a linear functional on $\tilde \alpha_1 \otimes \alpha_3$ and $\phi_2$ is a linear functional on $\tilde \beta_1 \otimes \beta_3$. The proof of \cite[Prop.\,2.16]{Sun} shows that $\phi_1$ is nonzero on the PRV component of $\tilde \alpha_1 \otimes \alpha_3$ which is $\alpha_2$. Thus there is a vector $v \in \alpha_2$ such that $\phi_1 (v) \not = 0$. Similarly, there is a vector $w\in \beta_2$ such that $\phi_2 (w) \not = 0$. The map $\phi_1 \otimes \phi_2$ is nonzero on the tensor product $v \otimes w$ and this is an element of $\sigma_2$.
\end{proof}

\begin{definition}
Let $\tau_\mu^+=\tau_\mu \cart \tau_n $ denote the Cartan product of $\tau_\mu$ and $\tau_n$. Then 
$$
\tau_n =  \widetilde \tau_{\mu} \prv \tau_\mu^+.
$$
Let $\tau^+_\xi = \tau^+_{\mu} \otimes \widetilde{\tau^+_{\mu}}$ and we see that $\tau_n \otimes \widetilde \tau_n = \widetilde \tau_\xi \prv \tau_\xi^+$ . 
\end{definition}

We modify the notation from \cite[\S 3]{Sun} and define $W_\mu = \Pi_{S_n} (V_{\tilde \mu})$ and let $W^\infty_\mu$ the Casselman--Wallach smooth globalization of $W_\mu$. We know that $W^\infty_\mu$ is unitarizable and tempered and that the relative Lie algebra cohomology 
$$H^* (\g, K^\circ; M_\mu \otimes W^\infty_\mu) \not = 0.$$
That is, $W^\infty_\mu$ is in $\Omega(\mu)$ and is isomorphic to $\pi_\mu$ up to a twist by the sign character. In any case, we see that $\pi_\xi = W^\infty_\xi := W^\infty_{\mu} \hat \otimes W^\infty_{\tilde \mu}$. The irreducible representation $\tau^+_\mu$ of $\SO(n)$ occurs with multiplicity one in $W_{\tilde \mu}^\infty$ \cite[Lemma 3.3]{Sun}.  The analogous statement for ${\tilde \mu}$ says that $\tau_{\tilde \mu}^+ = \tau_\mu^+$ occurs with multiplicity one in $W_\mu^\infty$. It follows that $\tau^+_\xi$ occurs with multiplicity one in $W^\infty_\xi$.

\begin{lemma}\label{1-dim}
Any nonzero $G$-equivariant linear functional on $W^+_\xi$ does not vanish on $\tau^+_\xi$. For any irreducible Casselman--Wallach representation $\pi$ of $G$, we have
$$\dim\ \Hom_G ( \pi \otimes \tilde \pi , \id) = 1.$$
\end{lemma}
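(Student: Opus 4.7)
The plan is to establish the second assertion first, since it is a general Schur-type fact about irreducible Casselman--Wallach representations, and then bootstrap it into the first (non-vanishing) statement.

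For the dimension count $\dim \Hom_G(\pi \otimes \tilde \pi, \id) = 1$, I would invoke the standard results of Casselman--Wallach theory. For an irreducible CW representation $\pi$: the contragredient $\tilde \pi$ is again an irreducible CW representation; there is a canonical reflexivity isomorphism $\widetilde{\widetilde \pi} \cong \pi$; and Schur's lemma in this category gives $\End_G(\pi) = \C$. The canonical evaluation pairing $\pi \otimes \tilde \pi \to \C$ gives a nonzero element, so $\Hom_G(\pi \otimes \tilde \pi, \C) \neq 0$. For the upper bound, I would use the tensor-hom adjunction (applied with the projective tensor product appropriate to CW globalizations) to identify
$$
\Hom_G(\pi \,\hat\otimes\, \tilde \pi, \C) \ \cong \ \Hom_G(\pi, \widetilde{\widetilde \pi}) \ \cong \ \Hom_G(\pi, \pi) \ = \ \C.
$$
Thus every $G$-equivariant functional on $\pi \otimes \tilde \pi$ is a scalar multiple of the canonical pairing.

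For the non-vanishing assertion, I would apply the second part with $\pi = W^\infty_\mu$. Because $W^\infty_{\tilde \mu}$ is, up to a sign character twist, the contragredient of $W^\infty_\mu$ (the sign twist is $K$-type preserving and therefore harmless for this argument), we deduce that $\Hom_G(W^\infty_\xi, \C)$ is one-dimensional, spanned by the canonical pairing $\langle \cdot, \cdot \rangle$. Now the $K$-type $\tau^+_\mu$ occurs with multiplicity one in $W^\infty_\mu$ by the result of Sun cited in the text, and by duality its contragredient $\widetilde{\tau^+_\mu}$ occurs with multiplicity one in $W^\infty_{\tilde \mu}$; hence $\tau^+_\xi = \tau^+_\mu \otimes \widetilde{\tau^+_\mu}$ occurs with multiplicity one in $W^\infty_\xi$. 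The restriction of the canonical pairing to this pair of matched $K$-types is, up to a non-zero scalar, the natural pairing between a finite-dimensional $K$-representation and its dual, which is non-degenerate and in particular does not vanish on $\tau^+_\xi$. Since every nonzero $G$-equivariant functional is a scalar multiple of this pairing, it also does not vanish on $\tau^+_\xi$.

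The only delicate point I anticipate is the tensor-hom adjunction in the topological (CW) setting: one must work with the correct completed tensor product and continuous duals, and invoke reflexivity of CW globalizations rather than the purely algebraic Schur lemma. These are standard facts from Wallach's treatment, so the main obstacle is bookkeeping rather than genuine difficulty.
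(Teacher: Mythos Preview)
Your proposal is correct and essentially parallel to the paper's proof: both invoke Schur's lemma (in the Casselman--Wallach setting) for the dimension count, and both deduce the non-vanishing from non-degeneracy together with the multiplicity-one occurrence of $\tau_\mu^+$ and $\widetilde{\tau_\mu^+}$.

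One small organizational difference is worth noting. You prove the one-dimensionality first and then argue that every nonzero functional is a scalar multiple of the canonical pairing, which is visibly non-degenerate on matched $K$-types. The paper instead observes directly that \emph{any} nonzero $G$-equivariant functional on $W^\infty_\mu \,\hat\otimes\, W^\infty_{\tilde\mu}$ is automatically non-degenerate---since the left (or right) kernel is a closed $G$-subrepresentation of an irreducible representation---and then multiplicity one finishes the first assertion without logically relying on the second. Your route is slightly longer but equally valid; the paper's route has the minor advantage of showing the two assertions are independent.
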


\begin{proof}
A nonzero $G$-equivariant linear functional is non-degenerate. The first part follows since  $\tau^+_{\mu}$ and $\widetilde{\tau^+_{\mu}}$ have multiplicity one.
The second part is just a version of Schur's lemma in this context. 
\end{proof}

\smallskip

\begin{proof}[\bf Proof of Prop.\,\ref{prop-nv}]
Let $\phi_b$ be the composition of the maps
$$
\wedge^b(\g/\k) \ \to \ \tau_n \ \to \  
\widetilde \tau_{ \mu} \otimes \tau_{\mu}^+ \ \to \ M_{ \mu} \otimes W_{ \mu}^\infty,
$$
and similarly we let $\phi_t$ to be the composition of the maps
$$
\wedge^t(\g/\k) \ \to \ \widetilde \tau_n  \ \to \  \tau_{\mu} \otimes \widetilde{\tau_{ \mu}^+} \ \to \ 
M_{\tilde \mu} \otimes W_{\tilde \mu}^\infty.
$$
The maps above are well-defined since $\widetilde \tau_\mu = \tau_\mu$ or $\delta \cdot \tau_\mu$ and both these representations occur with multiplicity one in $M_\mu$. That the image  
of $\phi_b \otimes \phi_t$ in $H^{b + t} (\g, K^\circ; \id)$, denoted $\Xi( \phi_b \otimes \phi_t)$, is nonzero follows from the results in this section and the following commutative diagram:

{\small
$$
\xymatrix@1{
\wedge^{b+t} (\g/\k) \ar[r]^-{\Delta} & \wedge^{b+t} (\g \times \g/\k \times \k) \ar[rr]^-{(\phi_b \otimes \phi_t)^*} &  & (M_{ \mu} \otimes \pi_{ \mu}) \hat \otimes (M_{\tilde \mu} \otimes \pi_{\tilde \mu}) \ar[r]^-{\sim} & M_\xi \otimes \pi_\xi \ar[r]^-{\phi_M \otimes \phi_\pi} & \id \\
& \wedge^b (\g/\k) \otimes \wedge^t (\g/\k) \ar[u] \ar[dd] \ar[urr]_-{\phi_b \otimes \phi_t}  & &  & & \\
& &  &  \widetilde \tau_\xi \otimes \tau_\xi^+ \ar[uur] & & \\
& \tau_n \otimes \widetilde \tau_n &  &  \tau_n \otimes \widetilde \tau_n \ar[u] & & \\
& &  (\tau_n \otimes \widetilde \tau_n)^{C^\circ} \ar[ul] \ar[ur] & & & \\
}$$}

\noindent
When $n$ is odd, the maps $\phi_b$ and $\phi_t$ are generators of the one dimensional vector spaces $H^{b} (\g, K^\circ; M_{\mu} \otimes \pi_{\mu} )( \varepsilon)$ and  $H^{t} (\g, K^\circ; M_{\tilde \mu} \otimes \pi_{\tilde \mu} )(\tilde \varepsilon) $ respectively. The proposition follows.

\smallskip

Now suppose that $n$ is even, one can verify that $\delta \cdot \phi_b$ and $\delta \cdot \phi_t$ are the composition of the natural maps
$$
\wedge^b(\g/\k) \ \to \ \delta \cdot \tau_n \ \to \  \delta \cdot \widetilde \tau_{ \mu} \otimes \delta \cdot \tau_{\mu}^+ \ \to \ 
M_{ \mu} \otimes W_{ \mu}^\infty,$$
and 
$$
\wedge^t(\g/\k) \ \to \ \delta \cdot \widetilde \tau_n  \ \to \ \delta \cdot \tau_{\mu} \otimes \delta \cdot \widetilde{\tau_{ \mu}^+} 
\ \to \ M_{\tilde \mu} \otimes W_{\tilde \mu}^\infty.
$$
Since the $K$-types are not compatible $\Xi (\delta \cdot \phi_b \otimes \phi_t) =0$ and $\Xi (\phi_b \otimes \delta \cdot \phi_t) = 0$. We see that $\phi_{b, \varepsilon} = \phi_b + \varepsilon(\delta) \delta \cdot \phi_b$ and $\phi_{t, -\varepsilon} = \phi_t - \varepsilon(\delta) \delta \cdot \phi_t$ are generators of the one dimensional vector spaces $H^{b} (\g, K^\circ; M_{\mu} \otimes \pi_{\mu} )( \varepsilon)$ and  $H^{t} (\g, K^\circ; M_{\tilde \mu} \otimes \pi_{\tilde \mu} )(-\varepsilon) $ respectively. Since $\Xi (\phi_{b,\varepsilon} \otimes \phi_{t, \varepsilon}) = 0$, we see that $ \Xi (\phi_b \otimes \phi_t) = - \delta \cdot \Xi(\phi_b \otimes \phi_t) $. Finally, we have
\begin{equation*}
\begin{split} 
\Xi (\phi_{b,\varepsilon} \otimes \phi_{t, -\varepsilon}) & \ = \ \Xi (\phi_b \otimes \phi_t) - \delta \cdot \Xi(\phi_b \otimes \phi_t) + \varepsilon(\delta) [\Xi (\delta \cdot \phi_b \otimes \phi_t) - \Xi (\phi_b \otimes \delta \cdot \phi_t)]  \\
										  & \ = \ \Xi (\phi_b \otimes \phi_t) - \delta \cdot \Xi(\phi_b \otimes \phi_t) \\
										  & \ = \ 2\cdot \Xi( \phi_b \otimes \phi_t)
\end{split}
\end{equation*}
which is nonzero. We also remark that this quantity is independent of $\varepsilon$.
\end{proof}

\bigskip


\begin{thebibliography}{50}
\bibitem{clozel} L.\,Clozel, {\em Motifs et formes automorphes: applications du principe de fonctorialit\'e.} Automorphic forms, Shimura varieties, and L-functions, Vol.\,I (Ann Arbor, MI, 1988), 77--159, Perspect.\,Math., 10, Academic Press, Boston, MA, 1990.

\smallskip

\bibitem{Cog} J.W.\,Cogdell, {\em Lectures on $L$-functions, converse theorems, and functoriality for $\GL_n$.} Lectures on automorphic $L$-functions, 1--96, Fields Inst.\,Monogr., 20, Amer.\,Math.\,Soc., Providence, RI, 2004.

\smallskip


\bibitem{deligne} P.\,Deligne, 
{\it Valeurs de fonctions $L$ et p\'eriodes d'int\'egrales, With an appendix by N. Koblitz and A. Ogus.} 
In: Proc. Sympos. Pure Math., Vol. XXXIII, part II, AMS, Providence, R.I., (1979), pp. 313--346.

\smallskip

\bibitem{DS} P.\,Deligne and J.P.\,Serre, {\em Formes modulaires de poids 1.} 
Ann.\,Sci.\,\'Ecole Norm.\,Sup.\,(4) 7 (1974), 507--530 (1975). 

\smallskip

\bibitem{D} M.\,Dimitrov, {\em Galois representations modulo $p$ and cohomology of Hilbert modular varieties.} 
Ann.\,Sci.\,\'Ecole Norm.\,Sup.\,(4) 38 (2005), no.\,4, 505--551.

\smallskip

\bibitem{doi-hida-ishii}
K.\,Doi, H.\,Hida and H.\,Ishii, 
{\it Discriminant of Hecke fields and twisted adjoint L-values for GL(2).}  
Invent.\,Math.\,134 (1998), no.\,3, 547--577.

\smallskip


\bibitem{gan-raghuram}
W.T.\,Gan and A.\,Raghuram,
{\it Arithmeticity for periods of automorphic forms.} 
Automorphic representations and $L$-functions, 187--229, Tata Inst. Fundam. Res. Stud. Math., 22, 
TIFR, Mumbai 2013. 

\smallskip


\bibitem{gelbart-jacquet}
S.\,Gelbart and H.\,Jacquet, 
{\it A relation between automorphic representations of GL(2) and GL(3).} 
Ann.\,Sci.\,\'Ecole Norm.\,Sup., {\bf 11} (1978) 471--542.

\smallskip

\bibitem{Gh} E.\,Ghate, {\em Adjoint $L$-values and primes of congruence for Hilbert modular forms.} 
Comp.\,Math.\,132 (2002), no.\,3, 243--281.

\smallskip

\bibitem{ghate-cong} E.\,Ghate, 
{\it An introduction to congruences between modular forms.} 
Currents trends in number theory (Allahabad, 2000), 39--58, Hindustan Book Agency, New Delhi, 2002.

\smallskip

\bibitem{Gh1} E.\,Ghate, {\em On the freeness of the integral cohomology groups of Hilbert-Blumenthal varieties as Hecke modules.} Cycles, motives and Shimura varieties, 59--99, Tata Inst.\,Fund.\,Res.\,Stud.\,Math., Tata Inst.\,Fund.\,Res., Mumbai, 2010.

\smallskip

\bibitem{GJ} R.\,Godement and H.\,Jacquet, {\em Zeta functions of simple algebras.} Lecture Notes in Mathematics, Vol.\,260. Springer-Verlag, Berlin-New York, 1972.

\smallskip

\bibitem{Gr} L.\,Greni\'e, {\em Critical values of automorphic $L$-functions  for $\GL(r) \times \GL(r)$.} Manuscripta Math.\,110 (2003), no.\,3, 283--311.

\smallskip

\bibitem{GH} H.\,Grobner and M.\,Harris, {\it Whittaker periods, motivic periods, and special values of tensor product $L$-functions.} Preprint available at 
{\it http://arxiv.org/abs/1308.5090}

\smallskip

\bibitem{GHL} H.\,Grobner, M.\,Harris and E.\,Lapid, {\it Whittaker regulators and non-critical values of the Asai $L$-function.} Preprint available at  
{\tt http://arxiv.org/abs/1408.1840v1}.

\smallskip

\bibitem{grobner-raghuram-ajm} 
H.\,Grobner and A.\,Raghuram, 
{\it On the arithmetic of Shalika models and the critical values of L-functions for GL(2n).} 
With an appendix by Wee Teck Gan. 
Amer.\,J.\,Math., 136 (2014) 675--728.  

\smallskip

\bibitem{grobner-raghuram-ijnt} H.\,Grobner and A.\,Raghuram,  
{\it  On some arithmetic properties of automorphic forms of GL(m) over a division algebra.}
 Int.\,J.\,Number Theory 10 (2014), no. 4, 963--1013.

\smallskip

\bibitem{Ha-AlGeo1} G.\,Harder, 
{\em Lectures in Algebraic Geometry I. 
Sheaves, cohomology of sheaves, and applications to Riemann surfaces.} 
Second revised edition. Aspects of Mathematics, E35. Vieweg + Teubner Verlag, Wiesbaden, 2011

\smallskip

\bibitem{Ha} G.\,Harder, {\em Cohomology in the language of Adeles.} Chapter in the book `Cohomology of Arithmetic Groups.' \\
{\tt http://www.math.uni-bonn.de/people/harder/Manuscripts/buch/chap3.pdf}.

\smallskip

\bibitem{harder-HC} 
G.\,Harder, 
Harish-Chandra modules over $\Z$, preprint available at: {\tt  http://arxiv.org/abs/1407.0574}

\smallskip

\bibitem{harder-raghuram-cras}
G.\,Harder and A.\,Raghuram, {\it Eisenstein cohomology and ratios of critical values of Rankin--Selberg L-functions.} 
C.\,R.\,Math.\,Acad.\,Sci. Paris 349 (2011), no. 13-14, 719--724.

\smallskip

\bibitem{harder-raghuram-preprint}
G.\,Harder and A.\,Raghuram,
{\it Eisenstein Cohomology for GL(N) and ratios of critical values of Rankin-Selberg L-functions - I.} 
Preprint available at: {\tt http://arxiv.org/abs/1405.6513}

\smallskip

\bibitem{Hi1} H.\,Hida, {\em Congruence of cusp forms and special values of their zeta functions.} Invent.\,Math.\,63 (1981), no.\,2, 225--261.

\smallskip

\bibitem{Hi2} H.\,Hida, {\em On congruence divisors of cusp forms as factors of the special values of their zeta functions.} Invent.\,Math.\,64 (1981), no.\,2, 221--262. 

\smallskip

\bibitem{hida-shimura-vol}
H.\,Hida,
{\em Non-critical values of adjoint L-functions for SL(2).} 
Automorphic forms, automorphic representations, and arithmetic (Fort Worth, TX, 1996), 123-175, Proc. Sympos. Pure Math., 66, Part 1, Amer. Math. Soc., Providence, RI, 1999.

\smallskip

\bibitem{J} H.\,Jacquet, {\em Automorphic Forms on $\GL(2)$.\,Part II.}  Lecture Notes in Mathematics, Vol.\,278. Springer-Verlag, Berlin-New York, 1972.

\smallskip

\bibitem{jacquet-arch} H.\,Jacquet, 
Archimedean Rankin-Selberg integrals. Automorphic forms and L-functions II. Local aspects, 57--172, Contemp. Math., 489, Amer. Math. Soc., Providence, RI, 2009. 

\smallskip

\bibitem{j-ps-s}
H.\,Jacquet, I.\,Piatetski-Shapiro and J.\,Shalika, 
{\it Conducteur des repr\'esentations du groupe lin\'eaire.} (French) 
[Conductor of linear group representations]  Math.\,Ann.\,256, no. 2 (1981) 199--214. 

\smallskip

\bibitem{JS} H.\,Jacquet and J.\,Shalika, {\em On Euler products and the classification of automorphic representations.\,I}. Amer.\,J.\,Math.\,103 (1981), no.\,3, 499-558.

\smallskip

\bibitem{knapp} A.\,W.\,Knapp, 
{\it Local Langlands correspondence: the Archimedean case.} In: {\it Motives}, Proc. Sympos.\,Pure Math., Vol.\,55, Part 2, AMS, Providence, R.I., (1994) 393--410.

\smallskip

\bibitem{N} K.\,Namikawa, {\em On congruence prime criteria for $\GL_2$ over number fields.} Published ahead-of-print Sep 2013 in J.\,Reine Angew.\,Math. (DOI 10.1515/ crelle-2013-0076).

\smallskip

\bibitem{raghuram-imrn} A.\,Raghuram, 
{\it On the special values of certain Rankin--Selberg $L$-functions and applications to odd 
symmetric power $L$-functions of modular forms.}
Int.\,Math.\,Res.\,Not.\, Vol.\, (2010) 334--372, doi:10.1093/imrn/rnp127.

\smallskip

\bibitem{raghuram-legacy} A.\,Raghuram, 
{\it Comparison results for certain periods of cusp forms on ${\rm GL}_{2n}$ over a totally real number field.} 
To appear in the proceedings of `The Legacy of Ramanujan' conference, New Delhi, 2012. 

\smallskip

\bibitem{raghuram-2013} A.\,Raghuram, {\em Critical values of 
Rankin--Selberg $L$-functions for $\GL_n \times \GL_{n-1}$ and the symmetric cube $L$-functions for $\GL_2$.} 
Preprint 2013. Available at {\tt http://arxiv.org/abs/1312.5955}

\smallskip

\bibitem{RS} A.\,Raghuram, F.\,Shahidi, 
{\em On certain period relations for cusp forms on $\GL_n$.} Int.\,Math.\,Res.\,Not.
\,IMRN 2008, Art.\,ID rnn 077, 23 pp.

\smallskip

\bibitem{ribet}
K.\,Ribet,  
{\it Mod p Hecke operators and congruences between modular forms.} 
Invent.\,Math.\,71 (1983), no.\,1, 193--205.

\smallskip

\bibitem{Scholze} 
P.\,Scholze, {\em On torsion in the cohomology of locally symmetric varieties.} Preprint available at \\
{\tt http://arxiv.org/abs/1306.2070}.

\smallskip

\bibitem{schwermer}
J. Schwermer, 
{\it Eisenstein series and cohomology of arithmetic groups: the generic case.} 
Invent.\, Math., 116 (1994), no. 1-3, 481--511.

\smallskip

\bibitem{Shi} G.\,Shimura, 
{\em Introduction to the arithmetic theory of automorphic functions.}
Reprint of the 1971 original. Publications of the Mathematical Society of Japan, 11. Kan\^o Memorial Lectures, 1. Princeton University Press, Princeton, NJ, 1994.

\smallskip

\bibitem{shimura-duke}
G. Shimura, 
{\it The special values of the zeta functions associated with Hilbert modular forms.}
Duke Math. J. 45 (1978), no. 3, 637--679.

\smallskip

\bibitem{Sun} B.\,Sun, 
{\em The Nonvanishing hypothesis at infinity for Rankin--Selberg convolutions.} 
Preprint available at \\ 
{\tt http://arxiv.org/abs/1307.5357}.

\smallskip

\bibitem{U} E.\,Urban, 
{\em Formes automorphes cuspidales pour $\GL_2$ sur un corps quadratique imaginaire. Valeurs sp\'eciales de fonctions $L$ et congruences.}  Comp.\,Math.\,99 (1995), no.\,3, 283--324.

\smallskip

\bibitem{Vat} V.\,Vatsal, 
{\em Canonical periods and congruence formulae.} Duke Math.\,J.\,98 (1999), no.\,2, 397-419. 

\smallskip

\bibitem{waldy} J.-L.\,Waldspurger, 
{\it Quelques propri\'et\'es arithm\'etiques de certaines formes automorphes sur GL(2).} \\
Comp.\,Math.\, 54 (1985) 121--171. 

\smallskip

\bibitem{zhang} W.\,Zhang, {\it Automorphic period and the central value of Rankin--Selberg L-function.} J.\,Amer.\,Math.\,Soc.\,27 (2014), no.\,2, 541-612.

\bigskip

\end{thebibliography}
\end{document}